\title{Analysis of The Tailored Coupled-Cluster Method in Quantum Chemistry%
\thanks{Submitted to the editors \today.
	\funding{This work has received funding from the Research Council of Norway (RCN) under CoE Grant No.~262695 (Hylleraas Centre for Quantum Molecular Sciences), from ERC-STG-2014 under grant No.~639508, from the Hungarian National Research, Development and Innovation Office (NKFIH) through Grant Nos. K120569, NN110360 and from the Hungarian Quantum Technology NationalExcellence Program (Project No. 2017-1.2.1-NKP-2017-00001). \"O. L. also acknowledges financial support from the Alexander von Humboldt foundation.}
}}
\author{Fabian M. Faulstich%
	\thanks{Hylleraas Centre for Quantum Molecular Sciences, Department of Chemistry, University of
		Oslo, P.O. Box 1033 Blindern, N-0315 Oslo, Norway
		(\email{f.m.faulstich@kjemi.uio.no}).}
	\and
	Andre Laestadius%
	\footnotemark[2]
	\and
	\"Ors Legeza%
	\thanks{Strongly Correlated Systems "Lend\"ulet" Research Group, Wigner Research Centre for Physics, H-1525, Budapest, Hungary.}
	\and
	Reinhold Schneider%
	\thanks{Modeling, Simulation and Optimization in Science, Department of Mathematics, Technische Universit\"at Berlin, Sekretariat MA 5-3, Stra\ss e des 17. Juni 136, 10623 Berlin,Germany}
	\and
	Simen Kvaal%
	\footnotemark[2]
}
\begin{document}
\maketitle

\begin{abstract}
In quantum chemistry, one of the most important challenges is the static correlation problem when solving the electronic Schr\"odinger equation for molecules in the Born--Oppenheimer approximation. In this article, we analyze the tailored coupled-cluster method (TCC), one particular and promising method for treating molecular electronic-structure problems with static correlation.
The TCC method combines the single-reference coupled-cluster (CC) 
approach with an approximate reference calculation in a subspace [complete active space (CAS)] of the considered Hilbert space that covers the static correlation. 
A one-particle spectral gap assumption is introduced, separating the CAS from the remaining Hilbert space. This replaces the nonexisting or nearly nonexisting gap between the highest occupied molecular orbital and the lowest unoccupied molecular orbital usually encountered in standard single-reference quantum chemistry.
The analysis covers, in particular, CC methods tailored by tensor-network states (TNS-TCC methods).
The problem is formulated in a nonlinear functional analysis framework, and, under certain conditions such as the aforementioned gap, local uniqueness and existence are proved using Zarantonello's lemma. From the Aubin--Nitsche-duality method, a quadratic error bound valid for TNS-TCC methods is derived, e.g., for linear-tensor-network TCC schemes using the density matrix renormalization group method.
\end{abstract}

\begin{keywords}
Multi-reference Coupled-Cluster Method, Tailored Coupled-Cluster Method, Density Matrix Renormalization Group Method, Tensor Network States, Error Estimates, Existence and Uniqueness
\end{keywords}

\begin{AMS}
	65Z05, 81-08, 81V55, 81P40
\end{AMS}


\section{Introduction}
In this article, we present an analysis of the coupled-cluster (CC) method tailored by tensor-network states (TNS) for statically correlated electronic systems in quantum chemistry, thereby providing one of the first mathematically rigorous analyses of a multireference CC (MRCC) method.
\newline
\indent
The CC method is today the most popular wavefunction-based computational method in quantum chemistry~\cite{RevModPhys.79.291}. The CCSD(T) scheme, the CC approach with single, double and perturbative triple excitations~ \cite{raghavachari1989fifth,bartlett1990non}, is referred to as the \textit{gold standard of quantum chemistry}, as it yields computational results within error bars of practical experiments for small- and medium-sized molecules at a reasonable cost~\cite{lee1995achieving}. 
However, a severe disadvantage of conventional CC theory is that it fails dramatically for multireference systems, that is, systems whose wavefunction cannot be well approximated by a single Slater determinant reference function \cite{helgaker2014molecular}.
Such systems are said to be \emph{statically correlated}, opposed to systems that are well approximated by a single Slater determinant, which are said to be \emph{dynamically correlated} only. 
\newline
\indent
Even if most molecules are single-reference systems in their equilibrium configuration, multireference character arises even in the simplest of chemical reactions, e.g., dissociation of $\text{N}_2$. Yet, the static correlation problem is a long-lasting challenge in quantum chemistry.
Many different MRCC approaches have been formulated to deal with the problem of static correlation. 
However, aside from formal difficulties and implementational complications, none of these methods have become a widely applicable tool. 
A review of different MRCC approaches is beyond the scope of this article, and we refer to Lyakh \emph{et al.}~\cite{lyakh2012multireference} for a detailed description of the different benefits and disadvantages.
\newline
\indent
We are here concerned with an MRCC method that is based on the single-reference methodology (also called an externally corrected ansatz): 
The tailored CC (TCC) method extends a precomputed solution for a chosen subsystem of the full system by including further electron correlations via CC theory. 
We refer to the subsystem as the complete active space (CAS) and to the remaining system as the external space.
Given the single-reference CC method's major drawback, this subsystem needs to contain the static correlations. 
Consequently, the TCC method can be seen as a special type of an embedding method.
Mathematically this corresponds to a division of excitation operators in two disjoint sub-algebras \cite{kowalski2018properties}.
Nevertheless, in comparison with other ``genuine'' MRCC schemes, the TCC method suffers from the drawback that it is based on a single-reference theory and therewith introduces a certain bias towards a particular reference determinant.
A possible remedy for this drawback is a large CAS covering the static correlations.
The exponential scaling of the CAS makes an efficient approximation scheme for statically correlated systems indispensable for a TCC implementation of practical significance. 
To that end, the TCC method was recently combined with the density matrix renormalization group (DMRG) method. 
The DMRG method \cite{white1999ab} is a high accuracy tool for statically correlated systems~\cite{chan2002highly}, nonetheless, for dynamically correlated problems it requires high computational resources making a wide-ranging application---at this time---intractable.
Hence, it is the symbiosis of the DMRG and the CC method that creates a high efficiency scheme suitable for multireference systems \cite{veis2016coupled,doi:10.1021/acs.jpclett.6b02912,veis2018full,faulstich2019numerical,antalik2019towards,lang2019towards}.    
Granted that the DMRG-TCCSD method is the major motivation for the following analysis, we highlight that the applicability of this article's results exceeds the DMRG-TCCSD method and, more generally, the TNS-TCC method.
\newline
\indent
This paper is organized as follows.
We start by giving a short mathematical introduction to quantum chemistry. 
In Section~\ref{sec:AppOfSe}, we introduce the TCC method with its major caveat: the CAS choice.
Our main results---presented in Section \ref{Sec:Analysis}---rest on certain assumptions that are connected to the structure of the one-particle basis from which the $N$-electron wavefunctions are constructed. 
Generalizing the concept of a HOMO-LUMO gap (see Section~\ref{subsec:LocalUniqueness}), we introduce a gap between the CAS and the external space (Assumption (A)). 
This allows us to derive various norm equivalences that can be used to establish continuity of the considered cluster operators with respect to different topologies. 
Moreover, a more technical constraint (Assumption (B)) enters our analysis when we assume that the fluctuation potential, i.e., an operator that models a part of the electron--electron interaction, cannot be too large when restricted to the external space.
This manifests the importance of a well-chosen CAS as mentioned above. 
Also, as far as the multireference character of systems included in our treatment is concerned, we have to assume that those determinants that contribute the most in the $N$-electron CAS have energies very similar to the reference determinant. 
Other determinants can contribute too, but their weight must become smaller the larger the energy difference with respect to the reference determinant becomes. 
We then use Zarantonello's lemma to derive local existence and uniqueness of TCC solutions under {Assumption} (A) and (B). 
In Section~\ref{Sec:Error}, we perform an energy error analysis and present major differences to the single-reference CC method. 
Via the Aubin--Nitsche-duality method we are able to derive a quadratic energy error bound valid for TCC schemes like the TNS-TCC method.

\section{The Electronic Schr\"odinger Equation}
\label{sec:SE}
In general, a Hamilton operator is an elliptic differential operator, formally defined by
\begin{equation}
H\psi = -\frac{1}{2}\Delta \psi +V\psi~.
\label{Eq:Hgen}
\end{equation}
The function $V:\mathbb{R}^n\rightarrow \mathbb{R}$ is called the potential of the operator.
Such differential operators are in general well studied \cite{evans2010partial,garcke2000computation,gustafson2011mathematical,renardy2006introduction}.
However, the numerical treatment of physical systems, especially electronic systems, is still challenging. 
In the spirit of mathematical rigor, we summarize the weak formulation of the Hamilton operator in Eq.~\eqref{Eq:Hgen}:

\textit{
The Hamilton operator induces a bilinear form $\mathcal{A}_V: \mathcal{C}^{\infty}_c(\mathbb{R}^n)\times  \mathcal{C}^{\infty}_c(\mathbb{R}^n)$ by
\begin{equation}
\mathcal{A}_V(\tilde \psi,\psi) =\frac{1}{2}\langle\nabla \tilde \psi, \nabla\psi\rangle_{(L^2(\mathbb{R}^{n}))^{n}}+\langle \tilde \psi,  V \psi \rangle_{L^2(\mathbb{R}^{n})}~,
\label{eq:quad}
\end{equation}
where $\mathcal{C}^{\infty}_c(\mathbb{R}^n)$ is the space of smooth functions on $\mathbb{R}^n$ with finite support.
Assuming boundedness of $V(x)(\cdot): \mathcal{C}^{\infty}_c(\mathbb{R}^n)\to L^2(\mathbb{R}^n)$, the Cauchy-Schwarz inequality yields
$\mathcal{A}_V(\tilde \psi,\psi)\leq C\Vert \tilde \psi \Vert_{H^1(\mathbb{R}^{n})} \Vert \psi \Vert_{H^1(\mathbb{R}^{n})}$, for all $\tilde \psi,\psi\in \mathcal{C}^{\infty}_c(\mathbb{R}^n)$.
Since $ \mathcal{C}^{\infty}_c(\mathbb{R}^n)$ is dense in $H^1(\mathbb{R}^{n})$, we can extend $\mathcal{A}_V$ to a bounded and symmetric bilinear form on $H^1(\mathbb{R}^{n})\times H^1(\mathbb{R}^{n})$. }

Subsequently we omit the domain of the function space whenever it is clear from context.
In this article, we assume that $H$ satisfies G\aa rding's inequality \cite{renardy2006introduction}, i.e., there exist $c,e\in\mathbb R$ with $c>0$ such that
\begin{equation}
\mathcal{A}_V(\psi,\psi) +e\langle  \psi,\psi\rangle_{L^2} \geq c \Vert \psi\Vert_{H^1}^2~.
\label{eq:Gard}
\end{equation}
We furthermore define the Rayleigh--Ritz quotient $\mathcal R_V(\psi) = \mathcal{A}_V(\psi,\psi)/\langle\psi,\psi\rangle_{L^2}$ for all $\psi\in H^1\setminus \{0\} $. Then $E_0 = \inf_{\psi\in H^1 \setminus \{0\} } \mathcal R_V(\psi)$ is well defined even though the infimum need not be attained. 
However, if such a minimizer exists it is called a ground state. 
Under the assumption that $H$ attains a ground state $\psi_0\in  H^1$ we can recast the Schr\"odinger equation $\mathcal A_V(\tilde \psi,\psi_0) = E_0 \langle \tilde{\psi},\psi_0\rangle_{L^2}$ for all $\tilde \psi\in H^1$ (i.e. $H \psi_0 = E_0 \psi_0$) by means of the Rayleigh--Ritz variational principle:
\begin{equation}
E_0=\min_{\psi\in H^1 \setminus \{0\} }  \mathcal R_V(\psi) ~.
\label{eq:Rayleigh-Ritz variational principle1}
\end{equation}
Note, whenever $
\gamma = \inf \{\mathcal R_V(\psi): \psi\in H^1 , \psi\neq 0 , \langle \psi_0,\psi\rangle_{L^2} =0 \}-E_0 >0$, $\psi_0$ is (up to a phase) the unique ground state of $H$ and $\gamma$ is called the spectral gap. 
\newline
\indent
This article focuses on the electronic Schr\"odinger equation obtained from the Born--Oppenheimer approximation \cite{slater1964quantum,born1927quantentheorie}. 
In Hartree atomic units, the Hamilton operator of a Coulomb system that consists of $N$ electrons and $N_{\mathrm{nuc}}$ nuclei reads
\begin{align*}
H\psi(x)
&= 
-\sum_{i=1}^{N}
\frac{1}{2} \Delta_i\psi(x)
+
\underbrace{
\Big(
\frac{1}{2}\sum_{i=1}^{N}\sum_{j\neq i}^{N}\frac{1}{|r_i-r_j|}
-
\sum_{i=1}^{N}\sum_{j=1}^{N_{\mathrm{nuc}}}\frac{Z_j}{|r_i-R_j|}\Big)}_{=V_C}\psi(x)~,
\label{Hamiltonian}
\end{align*}
with $V_C$ the Coulomb potential.
Here $\psi(x)= \psi(x_1,\dots,x_N)$, where the argument $x_i=(r_i,s_i)$ for $i\in\{1,...,N\}$ is associated with the position of the $i$-th electron $r_i\in\mathbb{R}^3$ and its spin $s\in\{\pm 1/2\}$. 
As a result of the Born--Oppenheimer approximation, the nuclei positions $R_j\in\mathbb{R}^3$ and charges $Z_j>0$, $j \in \{1,\dots,M\}$, enter as fixed parameters. 
This general formulation is so far independent of spin as an explicit variable. 
Moreover, solutions to the above Hamiltonian do not naturally fulfill Pauli's principle, i.e., fermionic state functions need to be antisymmetric with respect to permutations of the coordinates $x_i$. 
Considering these further constraints, the set of admissible wavefunctions is given by
\begin{equation}
\mathcal{H}
= H^1\left(\left(\mathbb{R}^3\times\left\lbrace\pm\frac{1}{2}\right\rbrace\right)^N\right)\cap\bigwedge_{i=1}^NL^2\left(\mathbb{R}^3\times\left\lbrace\pm\frac{1}{2}\right\rbrace\right)~,
\end{equation}
where $\wedge$ is the antisymmetric tensor product that guarantees Pauli's principle. 
We conclude, the minimization problem Eq.~\ref{eq:Rayleigh-Ritz variational principle1} corresponding to electronic structure calculations is given by
\begin{equation}
\label{eq:Rayleigh-Ritz variational principle}
E_0=\min_{\psi\in \mathcal{H} \setminus \{0\} }  \mathcal R_{V_C}(\psi)~.
\end{equation}
   
\begin{rmk}
\label{Rmk:Gelfand}
The Hamilton operator is here a map $H:H^1\supseteq \mathcal{H}\to H^{-1}$, where $ H^{-1}$ is the dual space of $H^1$. In particular, this means that instead of the $L^2$-inner product we need to consider the dual pairing $\langle\cdot,\cdot\rangle_{H^1,H^{-1}}$.
To justify the use of the inner product we recall that $H^1$ is continuously embedded in $L^2$ and that $H^1$ is dense in $L^2$, i.e, $H^1$ is densely embedded in $L^2$ and we write $H^1\overset{d}{\hookrightarrow} L^2$.
For such a Hilbert space structure, we define the Gelfand triple $H^1\overset{d}{\hookrightarrow} L^2\overset{d}{\hookrightarrow}H^{-1}$ (also called rigged Hilbert space), identifying $L^2\simeq (L^{2})'$.
Note that as a consequence we are no longer allowed to identify $H^1\simeq H^{-1}$.
One advantage of the Gelfand triple is that the use of the $L^2$ inner product instead of the dual pairing $\langle\cdot,\cdot\rangle_{H^1,H^{-1}}$ becomes meaningful~\cite{wlokapartial}:
Given the Gelfand triple $H^1\overset{d}{\hookrightarrow} L^2\overset{d}{\hookrightarrow}H^{-1}$ and the scalar product $\langle\cdot,\cdot\rangle_{L^2}$ on $L^2\times L^2$, we find $\langle x,y\rangle_{L^2} = \langle x,y\rangle_{H^1 \times H^{-1}}$ for all $x\in H^1$ and $y\in L^2$ since $H^1 \subseteq L^2$ and $L^2 \subseteq H^{-1}$.
By Hahn--Banach we can therefore continuously extend $\langle x, \cdot\rangle_{L^2}$ from $L^2$ to $H^{-1}$ for arbitrary but fixed $x\in H^1$.
\end{rmk}

Remark~\ref{Rmk:Gelfand} becomes important when considering quantum molecular systems on the infinite dimensional Hilbert space $\mathcal H$.
We subsequently make use of the inner product notation, emphasizing that the reader should keep this detail in mind. 
Moreover, henceforth we use the short notation $\langle \cdot,\cdot\rangle$ rather than $\langle\cdot,\cdot\rangle_{L^2}$ or $\langle\cdot,\cdot\rangle_{l^2}$ whenever the meaning is clear from context.

\section{Approximate Solutions of the Schr\"odinger Equation}
\label{sec:AppOfSe}

The high dimensionality of Eq.~\eqref{eq:Rayleigh-Ritz variational principle} makes a direct minimization in general intractable.
The variety of possible approximations, depending on the chemical problem and required accuracy, is rich \cite{helgaker2014molecular,martin2004electronic,hjorth2017advanced}.
However, most wavefunction based schemes rely on an antisymmetrized product ansatz.
The factors of this exterior product are called \emph{spin-orbitals} and the functions spanning the solution space are denoted \emph{Slater determinants}. 
Subsequently, we denote the spin-orbitals by $\chi$ and Slater determinants by $\phi$.
For an $N$-electron problem, let $N<K$ and $\mathscr B = \{\chi_1,...,\chi_K\}\subseteq  H^1(\mathbb{R}^3\times \{\pm\frac{1}{2}\})$ denote an $L^2(\mathbb{R}^3\times \{\pm\frac{1}{2}\})$-orthonormal set of functions, called spin-orbitals.
An $N$-particle wavefunction fulfilling Pauli's exclusion principle is obtained by forming the exterior product of $N$ spin-orbitals $\{\chi_{\mu_1},...\chi_{\mu_N}\}$
\begin{equation}
\phi[\mu_1,...,\mu_N](x_1,...,x_N)= \frac{1}{\sqrt{N!}} \bigwedge_{i=1}^N \chi_{\mu_i}(x_1,...,x_N)   =\frac{1}{\sqrt{N!}}\det(\chi_{\mu_i}(x_j))_{i,j=1}^N~,
\label{SD}
\end{equation}
where the indices $\mu_1,...,\mu_N\in\{1,...,K\}$ are in canonical order, i.e., $\mu_1<...<\mu_N$. 
We see immediately that Slater determinants inherit $L^2$-orthonormality from the spin-orbital basis.
The corresponding Galerkin space $\mathcal{H}_K$ is then spanned by all possible exterior products of length $N$ in $\mathscr B$.
This construction yields a combinatorial scaling of $\mathcal{H}_K$---also called the full configuration-interaction (FCI) space.
An $L^2$-orthonormal basis $\mathcal B_K$ of $\mathcal{H}_K$ is obtained by imposing a canonical ordering of the spin-orbitals in the exterior products, i.e.,
\begin{equation*}
\mathcal B_K = \{\phi[\mu_1,...,\mu_N] :  \mu_i\in \{1,..., K \}, \mu_1<...<\mu_N\}~.
\end{equation*}
Subsequently we use the notation $\phi_\mu =\phi[\mu_1,...,\mu_N] $ and without loss of generality define the reference determinant $\phi_0=\phi[1,...,N]$. 
Furthermore, we use the standard terminology of quantum chemistry and call spin-orbitals defining $\phi_0$ occupied and the remaining ones virtual.
Indices $I,J,K,...$ are assumed to be occupied (i.e. smaller or equal than $N$) while $A,B,C,...$ are assumed to be virtual (i.e. greater than $N$).
\newline
\indent
Essential to the CC theory is the $L^2$-bounded commutative algebra of cluster operators $\mathcal{C}_K$, defined via single-excitation operators.
We define a single-excitation operator $X_I^A$ as follows: $X_I^A\phi_\mu$ replaces $\chi_I$ by $\chi_A$ for any $\phi_\mu$ if $\mu_i=I$ for some $i$ and $\mu_j\neq A$ for all $j$, otherwise $X_I^A\phi_\mu = 0$.
Since Slater determinants are normalized, this defines $X_I^A$ as an $L^2$-bounded operator. 
Higher order excitation operators are then defined as product of single-excitation operators, e.g ., the double excitation operator $X_{IJ}^{AB} = X_I^A X_J^B$.
The fermionic commutation relations, i.e., $[a^{\,}_i, a^\dagger_j]_+ = \delta_{i j}$ and  $[a_{i}^{\dagger },a_{j}^{\dagger }]_+=[a_{i}^{\,},a_{j}^{\,}]_+=0$, yield that excitation operators commute.
The set of excitation operators is then trivially an $L^2$-bounded and commutative algebra.
Furthermore we define the rank of an excitation operator as the length of the product, when written as product of single-excitation operators.
Note that by antisymmetry of Slater determinants, the product $X_{I_1... I_n}^{A_1... A_n}$ is antisymmetric under permutations of $\{I_1, ..., I_n\}$ and $\{A_1, ..., A_n\}$, respectively. 
Similar to $\mathcal{H}_K$, a basis of $\mathcal{C}_K$ is obtained by imposing a canonical ordering of the product of single-excitation operators with respect to the orbital indices.

\begin{proposition}
\label{rmk:isometricallt isomorphic}
We can induce a norm on $\mathcal{C}_K$ via $\Vert X_\mu\Vert_{\mathcal{C}_K} = \Vert X_\mu \phi_0 \Vert_{H^1}$.
Then $\mathcal{C}_K$ is isometrically isomorphic to $\text{span}\{\phi_0\}^{\perp}$, where $\perp$ denotes $L^2$-orthogonal complement in $\mathcal{H}_K$.
\end{proposition}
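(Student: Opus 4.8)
The plan is to realize the claimed isomorphism through the single natural map and then read off the isometry directly from the definition of the norm. Concretely, I would introduce the linear map $\Phi : \mathcal{C}_K \to \mathcal{H}_K$ defined on cluster operators by $\Phi(C) = C\phi_0$, and show that it restricts to a bijection between the canonical basis of $\mathcal{C}_K$ and the excited determinants $\{\phi_\nu : \nu \neq 0\}$. Once $\Phi$ is seen to be a linear isomorphism onto $\text{span}\{\phi_0\}^\perp$, the isometry is immediate, because the norm on $\mathcal{C}_K$ is by definition the pullback $\|C\|_{\mathcal{C}_K} = \|C\phi_0\|_{H^1} = \|\Phi(C)\|_{H^1}$ of the $H^1$ norm on the target.

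The core step is the combinatorial bijection. I would take a canonical basis element $X_\mu = X_{I_1\cdots I_n}^{A_1\cdots A_n}$ of $\mathcal{C}_K$ (with $n\ge 1$ and indices in canonical order) and apply it to $\phi_0 = \phi[1,\dots,N]$. By the definition of the single-excitation operators this removes the occupied orbitals $\chi_{I_1},\dots,\chi_{I_n}$ and inserts the virtual orbitals $\chi_{A_1},\dots,\chi_{A_n}$, so that $X_\mu\phi_0 = \pm\,\phi_{\nu(\mu)}$ for the unique canonically ordered index set $\nu(\mu)$ obtained this way. The assignment $\mu\mapsto\nu(\mu)$ is a bijection from the nontrivial canonical excitation patterns onto the indices $\nu\neq 0$, since choosing which occupied orbitals to delete and which virtual orbitals to create is exactly the same data as prescribing a Slater determinant distinct from the reference. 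The signs arising from reordering into canonical order (equivalently, from the fermionic anticommutation relations) are harmless: each $\pm\phi_{\nu(\mu)}$ spans the same line as $\phi_{\nu(\mu)}$, so the matrix of $\Phi$ in these two bases is a signed permutation matrix.

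With the bijection in hand the remaining points are routine finite-dimensional linear algebra. Invertibility of the signed permutation matrix shows $\Phi$ is a linear isomorphism; equivalently, writing $C = \sum_{\mu} c_\mu X_\mu$, the image $\Phi(C) = \sum_\mu c_\mu(\pm\phi_{\nu(\mu)})$ is a combination of distinct basis determinants, so $C\phi_0 = 0$ forces all $c_\mu = 0$, i.e. $C = 0$. This injectivity is precisely what guarantees that $\|\cdot\|_{\mathcal{C}_K}$ is a genuine norm rather than merely a seminorm. Because $\mathcal{B}_K$ is an $L^2$-orthonormal basis of $\mathcal{H}_K$ containing $\phi_0$, the excited determinants $\{\phi_\nu:\nu\neq 0\}$ form an orthonormal basis of $\text{span}\{\phi_0\}^\perp$, and the image of $\Phi$ is exactly their span. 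Combining this identification of the range with the defining relation $\|\Phi(C)\|_{H^1} = \|C\|_{\mathcal{C}_K}$ then yields the isometric isomorphism.

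The only genuinely delicate part is the sign and ordering bookkeeping in the second paragraph; everything else is formal. I would therefore concentrate the effort on making precise that an excitation operator sends $\phi_0$ to a single basis determinant up to sign, and that the occupied-removed/virtual-inserted index data is in bijection with the non-reference determinants, after which both injectivity and the identification of the range fall out immediately.
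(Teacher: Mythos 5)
Your proposal is correct and follows essentially the same route as the paper's proof: both rest on the (sign-ambiguous) bijection between canonical excitation operators and excited Slater determinants, extend it linearly to the map $C\mapsto C\phi_0$, and observe that the isometry is immediate since the norm on $\mathcal{C}_K$ is defined as the pullback of the $H^1$-norm. Your version simply spells out the details the paper leaves implicit (the signed-permutation-matrix argument, the injectivity step showing $\Vert\cdot\Vert_{\mathcal{C}_K}$ is a norm rather than a seminorm, and the identification of the range with $\text{span}\{\phi_0\}^{\perp}$).
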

\begin{proof}
For any $\phi_\mu \in \mathcal{B}_K$, there exists a unique excitation operator such that $\phi_\mu = X_\mu \phi_0$ up to a sign factor, i.e., $\phi_\mu$ is generated from $\phi_0$ by repeated substitution of occupied spin-orbitals. 
Conversely, for any excitation operator $X_\mu$ there is a unique $\phi_\mu\in \mathcal{B}_K$ such that $\phi_\mu = X_\mu \phi_0$ up to a sign factor.
Hence, we can define a bijective homomorphism between $\mathcal{C}_K$ and $\text{span}\{\phi_0\}^{\perp}$ , where we impose canonical vector-space operations on the respective spaces, i.e., vector addition and scalar multiplication.
By construction this map is trivially an isometry, which proves the claim.
\end{proof}

Subsequently, we refer to the basis index $\mu$ as an excitation index and switch to the more common multi-index notation, i.e., $\mu=\binom{A_1,...,A_r}{I_1,...,I_r}$ with occupied indices $\{I_1,...,I_r\}$ and virtual indices $\{A_1,...,A_r\}$. 
The set of all possible excitation indices is denoted $\mathcal{J}$, where we dropped the dependence on $K$ and the reference state due to notational simplicity.
Using the canonical ordering, the number of possible excitation indices up to a certain excitation rank $n\leq N$ is given by
\begin{equation*}
|\mathcal{J}|=\sum_{k=1}^n\binom{N}{k}\binom{K-N}{k}~.
\end{equation*}  

In practice the spin-orbitals in $\mathscr{B}$ and thus the reference wavefunction $\phi_0$ come from a preliminary Hartree--Fock calculation \cite{helgaker2014molecular,lieb1977hartree,lions1987solutions}: 
In a nutshell, starting with an initial spin-orbital basis $\{\chi_i^{(0)}\}_{i=1}^K$ we minimize Eq.~\ref{eq:Rayleigh-Ritz variational principle} with a mean-field potential.
This yields a nonlinear $K$-dimensional eigenvalue problem $\bar{F}(\chi_1,\cdots,\chi_N) \chi_i = \lambda_i \chi_i$, for $i = 1,\cdots,N$, where the Fock matrix $\bar{F}$ depends on the $N$ occupied spin-orbitals.
The Fock matrix is symmetric, implying that the $N$ eigenvectors can be completed with $K-N$ additional eigenvectors. It is these eigenfunctions that form $\mathscr{B}$. 
\newline
\indent
We observe that the Hartree--Fock calculation depends on the dimension $K$ in a manner which is not entirely controlled:
In general, it is unclear whether the $\{\chi_i\}_{i=1}^K$ form a global minimum of the Rayleigh--Ritz minimization problem and whether the solution converges as $K\to\infty$. 
Such questions are beyond the scope of the present article, but is relevant in context of the $K\to\infty$ limit of the TCC method, see Remark~\ref{rmk:OnTheStructureOfFockOperator}. 
The Hartree--Fock calculation induces a splitting of the Hamilton operator $H = F + W$ with $F = \sum_{i=1}^N \bar{F}(i)$, where $\bar{F}(i)=I\otimes\ldots\otimes I\otimes\bar{F}_{(i)} \otimes I\otimes...\otimes I$ indicating by $\bar{F}_{(i)}$ that $\bar{F}$ appears on the $i$-th position in the Kronecker product. 
Subsequently, we will refer to $F$ as the Fock operator and to $W$ as the fluctuation potential.
\newline
\indent
We define for any multi-index $\mu$ of excitation rank $n\leq N$ the number 
\[
\varepsilon_\mu = \sum_{j=1}^n (\lambda_{A_j} - \lambda_{I_j})~,
\]
i.e., the sum of the single-particle Hartree--Fock energy differences of the occupied and virtual spin-orbitals in $\mu$.
Defining $\Lambda_0 = \sum_{i=1}^N \lambda_i$---the sum over the $N$ first single-particle Hartree--Fock energies---we see that the Slater determinants $\mathcal{B}_K$, formed by the single-particle Hartree--Fock eigenfunctions, are the $N$-particle Hartree--Fock eigenfunctions with $F \phi_\mu = (\Lambda_0 + \varepsilon_\mu) \phi_\mu$.

Returning to the Schr\"odinger equation, the $L^2$-normalization constraint on $\psi \in\mathcal{H}_K$ is subsequently replaced by the intermediate normalization, i.e., $\langle \phi_0,\psi\rangle=1$.
Hence, $\psi=(I+S)\phi_0$ holds for an operator $S =\sum_{\mu\in\mathcal{J}}s_{\mu}X_{\mu} \in \mathcal{C}_K$ and we denote the basis coefficients $(s_{\mu})_{\mu\in\mathcal{J}}= (\langle \phi_\mu,\psi\rangle )_{\mu\in\mathcal{J}}$ \emph{excitation amplitudes}. 
Inserting this parameterization of wavefunctions into the Schr\"odinger equation, we find that Eq.~\eqref{eq:Rayleigh-Ritz variational principle} is equivalent to the linear problem
\begin{equation}
\left\lbrace
\begin{aligned}
E_0^{(\mathrm{FCI})} &=\langle \phi_{0},H\psi_0^{(\mathrm{FCI})} \rangle~,\\
0&=\langle \phi_{\mu},(H-E_0^{(\mathrm{FCI})})\psi_0^{(\mathrm{FCI})} \rangle,\quad\forall\mu\in\mathcal{J}~,
\end{aligned}\right.
\label{eq:FCI}
\end{equation}
which is known as the FCI scheme. 
For a derivation of the corresponding amplitude equations we refer the reader to \cite{helgaker2014molecular}.
\subsection{Projected Single-Reference Coupled-Cluster Method}
The previously described FCI approach suffers from the \emph{curse of dimensionality} since $\mathcal{H}_K$ grows exponential with the number of particles, i.e., $\dim(\mathcal{H}_K)$ $\in\mathcal{O}(K^N)$.
Furthermore, truncating the operator $S\in \mathcal{C}_K$ at rank-$n$ excitations reduces the computational cost but yields CI methods that are no longer energy size-extensive nor size-consistent \cite{helgaker2014molecular}, which are quantum chemical concepts relating to the correct energy behavior with respect to the system's size and dissociation \cite{shavitt2009mbpt}.
Alternatively to the linear manifold used in Eq.~\eqref{eq:FCI}, an exponential parameterization of wavefunctions can be used \cite{hubbard1957description,hugenholtz1957perturbation}:
Let $\psi\in\mathcal{H}_K$ be intermediately normalized, i.e., $\psi = (I+S)\phi_0$ for some $S\in\mathcal{C}_K$.
Then there exists a unique $T\in\mathcal{B}(H^1,H^{-1})$ with $\psi=e^T\phi_0$ \cite{schneider2009analysis} (for the result in the limit $K\to\infty$ see \cite{rohwedder2013continuous}), where
\begin{equation}
T =\sum_{\mu\in\mathcal{J}}t_{\mu}X_{\mu} \quad\text{and}\quad  T = \log(I +S) ~.
\end{equation}
This exponential parameterization has the benefit that it is multiplicatively separable with respect to subsystems that are separated by distance, thereby regaining size-extensivity and consistency under mild assumptions on the reference determinant~\cite{shavitt2009mbpt}. 

To solve the Schr\"odinger equation, it remains to determine the \emph{cluster amplitudes} $(t_{\mu})_{\mu\in\mathcal{J}}$. 
This is the pursuit of the CC method. The linked CC equations describing the cluster amplitudes are given by \cite{helgaker2014molecular}:
\begin{equation}
\left\lbrace
\begin{aligned}
E^{(\mathrm{CC})}_0 &=\langle\phi_0,e^{-T}He^{T}\phi_0\rangle~,\\
0&=\langle\phi_{\mu},e^{-T}He^{T}\phi_0\rangle~, \quad\forall\mu\in\mathcal{J}~.
\end{aligned}\right.
\label{eq:LinkedCC}
\end{equation}
The equivalence to the Schr\"odinger equation~\eqref{eq:Rayleigh-Ritz variational principle}, is straightforwardly established \cite{helgaker2014molecular}:
Given an intermediately normalized minimizer of Eq.~\eqref{eq:Rayleigh-Ritz variational principle} $\psi = e^T\phi_0$, we obtain
\begin{equation*}
E_0 e^T\phi_0 = H e^T\phi_0 \Rightarrow E_0 \phi_0 = e^{-T} H e^T\phi_0 \Rightarrow \left\lbrace 
\begin{aligned}
E_0 &=\langle\phi_0,e^{-T}He^{T}\phi_0\rangle~,\\
0&=\langle\phi_{\mu},e^{-T}He^{T}\phi_0\rangle~, \quad\forall\mu\in\mathcal{J}.
\end{aligned}\right.
\end{equation*}
Conversely, given a solution $\psi = e^T\phi_0$ fulfilling Eqs.~\eqref{eq:LinkedCC}, we find
\begin{equation*}
\begin{aligned}
H e^T\phi_0 &= e^{T}e^{-T}H e^T\phi_0 = \sum_{\mu \in \mathcal{J}} e^{T} \phi_{\mu}\langle \phi_{\mu} , e^{-T}H e^T\phi_0\rangle + e^{T} \phi_{0}\langle \phi_{0} , e^{-T}H e^T\phi_0\rangle \\
&= E_0^{(CC)} e^{T} \phi_{0}~.
\end{aligned}
\end{equation*}
Note that this equivalence does in general not hold true under truncations of $T$, e.g., considering only single- and double-excitations in $T$ (the CCSD method).
In this case, the CC method is no longer variational. 
For a more detailed discussion on this topic see~\cite{laestadius2019coupled}.

We emphasize that there exists a one-to-one relation between cluster amplitudes $(t_\mu)_{\mu\in\mathcal{J}}$ and the therewith defined cluster operators $T =\sum_{\mu\in\mathcal{J}}t_{\mu}X_{\mu}$ \cite{rohwedder2013continuous}.
Therefore, we shall denote cluster amplitudes with small letters and the corresponding cluster operators with the respective capital letter. 
Let $\mathcal{V}_K^{(\mathrm{CC})}=\{ t\in \mathbb{R}^{|\mathcal{J}|}~:~\Vert t \Vert_{\mathcal{V}_{K}^{(\mathrm{CC})}}<+\infty \}$ be the (Hilbert) space of cluster amplitudes, where 
\begin{equation*}
\Vert \cdot \Vert_{\mathcal{V}_K^{(\mathrm{CC})}}:\mathbb{R}^{|\mathcal{J}|}\to[0,+\infty];~ t\mapsto\Vert t \Vert_{\mathcal{V}_K^{(\mathrm{CC})}}=\sqrt{\sum_{\mu\in\mathcal{J}}\varepsilon_{\mu}|t_{\mu}|^2}~.
\end{equation*} 
We see that $\Vert \cdot \Vert_{\mathcal{V}_K^{(\mathrm{CC})}}$ is a norm if $\varepsilon_{\mu}> 0$ for all $\mu\in \mathcal J$.
We then refer to $\Vert \cdot \Vert_{\mathcal{V}_K^{(\mathrm{CC})}}$ as the cluster amplitude norm. 
This is guaranteed by assuming a HOMO-LUMO gap, i.e., $\varepsilon_0=\lambda_{N+1}-\lambda_{N}>0$.

Although a HOMO-LUMO gap is very common in electronic structure analysis, it limits the results to a subset of systems. 
For statically correlated systems the Fock operator usually has a degenerate or almost degenerate spectrum, i.e., there exists no HOMO-LUMO gap or it is negligibly small. 
In either case, this yields divergence of the used quasi-Newton method since the HOMO-LUMO gap enters inversely in the approximate Jacobian. 

Formally, the linked CC equations can be defined using the CC function
\begin{equation*}
f_{\mathrm{CC}}:\mathcal{V}_K^{(\mathrm{CC})} \to \big(\mathcal{V}_K^{(\mathrm{CC})}\big)' ;~t \mapsto (\langle  \phi_{\mu}, e^{-T}He^{T}\phi_0  \rangle)_{\mu\in\mathcal{J}}
\end{equation*}
with the energy functional $\mathcal{E}_{\rm CC}:\mathcal{V}_K^{(\mathrm{CC})} \to \mathbb{R};~ ~t \mapsto  \langle\phi_0,e^{-T}He^{T}\phi_0\rangle$.
Consequently, we can write Eqs.~\eqref{eq:LinkedCC} as
\begin{equation*}
\left\lbrace
\begin{aligned}
E^{(\mathrm{CC})}_0&=\mathcal{E}_{\rm CC}(t)~,\\
0&=\langle v,f_{\mathrm{CC}}(t) \rangle~, \quad \text{for all}~v\in \mathcal{V}_K~.
\end{aligned}
\right.
\end{equation*}
This shows that the projected CC method is a nonlinear Galerkin scheme.
A corresponding analysis can be found in \cite{schneider2009analysis}.
\subsection{The Tailored Coupled-Cluster Method}
A major drawback of the projected CC theory is the intractability of statically correlated systems.  
Many attempts have been taken to remedy this impediment but so far no panacea has been found~\cite{RevModPhys.79.291}. 
The TCC method, as an externally corrected CC method, is not based on the Jeziorski--Monkhorst ansatz \cite{RevModPhys.79.291,lyakh2012multireference,kohn2013state}, however, it is still able to compute statically correlated systems with comparable accuracy \cite{veis2016coupled,doi:10.1021/acs.jpclett.6b02912,veis2018full,faulstich2019numerical,antalik2019towards}.
Using a basis splitting approach \cite{piecuch1993state,piecuch1994state,adamowicz1998state,piecuch2010active} it is possible to combine the single-reference CC method with CAS computations \cite{kinoshita2005coupled}.
To that end, the wavefunction is split into two parts: a fixed part imported from a prior CAS calculation and an external part, which is adjusted in the presence of that fixed CAS part.
We use the following basis splitting. 
\begin{definition}
Let $\{\chi_1,...,\chi_K\}\subseteq H^1$ be a set of $L^2$-orthonormal spin-orbitals with $K>N$ and $\phi_0$ the considered reference Slater determinant. 
We define
\begin{align*}
\mathscr{B}_{\mathrm{CAS}} = \lbrace \underbrace{\chi_1,..., \chi_N}_{\text{occupied}}, \underbrace{\chi_{N+1},..., \chi_k}_{\text{unoccupied}}\rbrace~,\quad
\mathscr{B}_{\mathrm{ext}} = \lbrace  \underbrace{\chi_{k+1},...,\chi_K }_{\text{external}}\rbrace
\end{align*}
and furthermore $\mathcal B_\mathrm{CAS} = \{\phi[\mu_1,...,\mu_N] :  \mu_i\in \{1,..., k \}, ~\mu_1<...<\mu_N\} $. 
The corresponding FCI space $\mathcal{H}_{\mathrm{CAS}}$ is then defined as the span of $\mathcal B_\mathrm{CAS}$.
We define $\mathcal{H}_{\mathrm{ext}}$ to be the $L^2$-orthogonal space of $\mathcal{H}_{\mathrm{CAS}}$, i.e., $ \mathcal{H}_K = \mathcal{H}_{\mathrm{CAS}}\oplus \mathcal{H}_{\mathrm{ext}}$.
Analogously, we split the set of excitation-indices $\mathcal{J}$ describing the set of possible excitations, i.e., $\mathcal{J}_{\mathrm{CAS}}=\left\lbrace \mu\in\mathcal{J}:X_{\mu}\phi_0\in \mathcal{H}_{\mathrm{CAS}} \right\rbrace$ and $\mathcal{J}_{\mathrm{ext}}=\left\lbrace \mu\in\mathcal{J}:X_{\mu}\phi_0\notin \mathcal{H}_{\mathrm{CAS}} \right\rbrace$.
\end{definition}
\begin{rmk}
We note that $\mathcal{J}_{\mathrm{ext}}$ does not only contain excitations into states purely excited in $\mathscr{B}_{\mathrm{ext}}$ but also into mixed states, i.e., for $\mu=\binom{A_1,...,A_n}{I_1,...,I_n}$ there exists at least one $l\in\{1,...,n\}$ such that $A_l\in\{k+1,...,K\}$.
\end{rmk}

We highlight that the basis splitting in practice cannot be arbitrary. 
For the correctness of the TCC method it is of utmost importance that $\mathscr{B}_{\mathrm{CAS}}$ covers all statically correlated spin-orbitals. Moreover, $\mathscr{B}_{\mathrm{ext}}$ should only consist of spin-orbitals with dynamic electron correlation. 
A well-chosen basis splitting can be obtained using concepts of quantum information theory as has been introduced in~\cite{szalay2017correlation}.
This caveat will be further discussed in Section \ref{sec:DMRGinQC}.
We also refer to \cite{faulstich2019numerical} for a case study on the N$_2$ molecule illustrating the TCC method's sensitivity to the CAS choice.

Given an intermediately normalized approximate CAS-solution $\phi_{\mathrm{CAS}}$, we can write $\phi_{\mathrm{CAS}}=e^{T^{\mathrm{CAS}}}\phi_0\approx \psi_{\rm CAS}^{(\mathrm{FCI})}$.
The TCC solution is then given by $\psi_*^{(\mathrm{TCC})}=e^{T^{\mathrm{ext}}}e^{T^{\mathrm{CAS}}}\phi_0$, where $T^{\rm ext}$ is obtained by solving the linked TCC equations:
\begin{equation}
\left\lbrace
\begin{aligned}
E_0^{(\mathrm{TCC})}
&=
\langle
\phi_0,e^{-T^{\mathrm{CAS}}}e^{-T^{\mathrm{ext}}}He^{T^{\mathrm{CAS}}}e^{T^{\mathrm{ext}}}\phi_0
\rangle~,\\
0
&=
\langle
\phi_{\mu},e^{-T^{\mathrm{CAS}}}e^{-T^{\mathrm{ext}}}He^{T^{\mathrm{CAS}}}e^{T^{\mathrm{ext}}}\phi_0
\rangle~,\quad \mu\notin\mathcal{J}_{\mathrm{CAS}}~.\\
\end{aligned}
\right. 
\label{eq:Linked-TCC-eq}
\end{equation}
We emphazise that for the TCC method, the CAS-solution $\phi_{\mathrm{CAS}}$ and therewith $T^{\rm CAS}$ is fixed. 
Similar to the analysis in \cite{schneider2009analysis}, a useful measure for the dynamical correction is a weighted $\mathit{l}^2$-norm of the external cluster amplitudes. 
Let 
\begin{equation*}
\mathcal{V}_{\mathrm{ext}}=\{ t\in\mathbb{R}^{|\mathcal{J}_{\mathrm{ext}}|}~:~\Vert t \Vert_{\mathcal{V}_{\mathrm{ext}}}<+\infty\}
\end{equation*} 
be the space of external cluster amplitudes, where 
\begin{equation*}
\Vert \cdot \Vert_{\mathcal{V}_{\mathrm{ext}}}:\mathbb{R}^{|\mathcal{J}_{\mathrm{ext}}|}\to [0,+\infty];\quad t\mapsto\Vert t \Vert_{\mathcal{V}_{\mathrm{ext}}}=\sqrt{\sum_{\mu\in\mathcal{J}_{\mathrm{ext}}}\varepsilon_{\mu}|t_{\mu}|^2}~.
\end{equation*}
The map $\Vert \cdot \Vert_{\mathcal{V}_{\mathrm{ext}}}$ is a norm if $\varepsilon_{\mu}>0$ for all $\mu\in\mathcal{J}_{\mathrm{ext}}$.
Assumptions on the considered systems to ensure such structure will be elaborated in Section~\ref{subsec:LocalUniqueness}.
Using this framework we can define the $N$-electron TCC function as follows.

\begin{definition}
\label{def:Tcc_function}
Let $K,\,N\in\mathbb{N}$ with $K>N$ be fixed, $\mathscr{B}=\{\chi_1,...,\chi_K\}\subseteq H^1$ a set of $L^2$-orthonormal spin-orbitals and $\phi_0\in\mathcal{H}_K$ the considered reference state.
Further, assume the splitting $\mathscr{B} = \mathscr{B}_{\mathrm{CAS}}\dot{\cup}\mathscr{B}_{\mathrm{ext}}$ of $\mathscr{B}$ and the CAS-solution $\phi_{\mathrm{CAS}}=e^{T^{\mathrm{CAS}}}\phi_0$, with corresponding amplitudes $t^{\rm CAS}=(t_{\mu}^{\rm CAS})_{\mathcal{J}_{\rm CAS}}$.
We define the TCC function
\begin{equation*}
f(\,\cdot\,;t^{\mathrm{CAS}}):\mathcal V_\mathrm{ext}\rightarrow\big(\mathcal V_\mathrm{\mathrm{ext}}\big)'\,;\quad t \mapsto f(t;t^{\mathrm{CAS}})~,
\end{equation*}
where $(f(t;t^{\mathrm{CAS}}))_{\mu}=\langle  \phi_{\mu}, e^{-T^{\mathrm{CAS}}}e^{-T}He^{T}e^{T^{\mathrm{CAS}}}\phi_0  \rangle$ for $\mu\in\mathcal{J}_{\mathrm{ext}}$.
In addition, let the TCC-energy functional be given by
\begin{equation*}
\mathcal E (t;t^{\mathrm{CAS}}) = \langle  \phi_{0}, e^{-T^{\mathrm{CAS}}}e^{-T}He^{T}e^{T^{\mathrm{CAS}}}\phi_0  \rangle~.    
\end{equation*}
\end{definition}

Using the TCC function, the linked TCC equations \eqref{eq:Linked-TCC-eq} become
\begin{equation*}
\left\lbrace
\begin{aligned}
 E_0^{(\mathrm{TCC})} &= \mathcal E(t;t^{\mathrm{CAS}})~,\\ 
0&=\langle v,f(t;t^{\mathrm{CAS}}) \rangle~, \quad \forall ~v\in \mathcal{V}_{\mathrm{ext}}~.\\
\end{aligned}
\right.
\end{equation*}
This formulation resembles the single-reference CC method. 
Indeed, $f(t;t^{\mathrm{CAS}}) = P_{\mathcal{V}_{\rm ext}}f_\mathrm{CC}(t \oplus t^{\mathrm{CAS}})$ with the orthogonal projection  $P_{\mathcal{V}_{\rm ext}}$ onto $ \mathcal{V}_{\rm ext}$, relates the TCC function to the classical CC function in Eq.~\eqref{eq:LinkedCC}.
Note that the CAS-part of the cluster amplitudes is still fixed.
Despite this close connection to the CC method, we shall see that the TCC scheme differs heavily from the single-reference CC method in its computational performance and analysis.
\subsection{Entropy based CAS choice}
\label{sec:DMRGinQC}
We start this section by noting that any Slater determinant can be uniquely described by an occupation tensor $\mathbf{e}^{\mathsf{m}_1} \otimes ... \otimes \mathbf{e}^{\mathsf{m}_K}$, where $ \mathbf{e}^0 = (1,~0)^T, \mathbf{e}^1 = (0,~1)^T \in \mathbb{R}^2 $.
This identification is part of the second quantization~\cite{helgaker2014molecular} and is in fact an isometric isomorphism (see the Jordan--Wigner transformation \cite{nielsen2005fermionic}).
Consequently, we can interpret any real wavefunction as an element in the $2^K$ dimensional linear space $\mathcal{W}_K =\bigotimes_{i=1}^K \mathbb{R}^2$ with given bais $\{\phi_{\mathsf{m}} = \mathbf{e}^{\mathsf{m}_1} \otimes ... \otimes \mathbf{e}^{\mathsf{m}_K} ~:~ \mathsf{m}_i\in\{0,1\}\}$.
Given a low-rank DMRG solution $\psi_{\rm DMRG}$ on $\mathcal{W}_K$, i.e., $\psi_{\rm DMRG}=\sum_{\mathsf{m}=1}^K c_{\mathsf{m} } \phi_{\mathsf{m} }$, we introduce the quantum information theory concepts used to chose a CAS.
We start by considering the $i$-mode matricization $\textbf{U}{[i]}\in\mathbb{R}^{ 2^{K-1}\times 2}$ of the solution tensor $\psi_{\rm DMRG}$, i.e., the matrix obtained form $\psi_{\rm DMRG}$ by transforming the basis elements $\phi_{\mathsf{m}}$ by taking $\mathsf{m}_i$ as row index and all remaining indices as compound column index.
We introduce the elementwise notation $U[i]_{(\mathsf{m}_1,...,\not\mathsf{m}_i,...\mathsf{m}_K),(\mathsf{m}_i)}$, where $\cancel{\mathsf{m}_i}$ means that $\mathsf{m}_i$ is removed from the binary string $\mathsf{m}$ and all remaining indeces are combined to one compound index. 
We then compute the single-orbital entropy for the $i$-mode matricization denoted $s(i)$, i.e., $ s(i) = -\mathrm{Tr}(\textbf{D}[i] \mathrm{ln}\textbf{D}[i])\in [0,\ln(2)]$, where $\textbf{D}[i]=\textbf{U}[i]^T \textbf{U}[i]\in\mathbb{R}^{ 2\times 2}$ is the single-orbital density matrix.
Based on Szalay {\it et al.} \cite{szalay2017correlation}, the single-orbital entropy can be used to describe the degree of electron correlation, i.e., a large value of $s(i)$ indicates static correlations. 
However, since the electron correlation is a two particle effect, we need to measure the information flow for all possible electron pairs. 
This is done via the mutual information:
We start by computing the two-orbital entropy $s(i,j)$.
Similarly to the single-orbital entropy $s(i)$, the two-orbital entropy $s(i,j) = -\mathrm{Tr}(\textbf{D}[i,j] \mathrm{ln}\textbf{D}[i,j])\in [0,\ln(4)]$ where $\textbf{D}[i,j]\in\mathbb{R}^{ 4\times 4}$ is the two-orbital density matrix obtained from  $U[i,j]_{(\mathsf{m}_i,\mathsf{m}_j)(\mathsf{m}_1,...,\not{\mathsf{m}_i},...,\not{\mathsf{m}_j}...\mathsf{m}_K)}$.
Given the single- and two-orbital entropies, we can compute the mutual information, $I(i,j) = s(i)+s(j)-s(i,j)$ for $i,j\in {1,....K}$.
This quantifies the electron correlations between orbital $i$ and $j$ as they are embedded in the whole system \cite{Rissler2006}.
The large values of $I(i,j)$ describe static correlations while the small matrix elements stand for the dynamic correlation. 
In certain cases, the decreasingly ordered values of $I(i,j)$ show a jump, which clearly distinguishes a set of statically correlated orbitals, and suggests a basis splitting at this jump. 
However, general mutual information profiles do not need to show such behavior. 
Then the {\it a priori} thresholds $\underline{s}$ and $\underline{n}$ are introduced to identify orbitals with $s(i)>\underline{s}$ and $I(i,j)>\underline{n}$.
It is these orbitals that are then used to define $\mathscr{ B}_{\mathrm{CAS}}$ and therewith the basis splitting.
In practice, $\underline{s}$ and $\underline{n}$ are systematically lowered until convergence of the DMRG-TCC method is reached.
This approach is heuristic but provides an efficient tool for obtaining well-chosen $\mathscr{B}_{\mathrm{CAS}}$ and $\mathscr{B}_{\mathrm{ext}}$, which is essential for the TCC method's success.
We highlight that the above procedure is feasible for larger systems since the used quantities are qualitatively very robust with respect to the bond-dimension, i.e., a CAS choice can be obtained from a low rank calculation on $\mathcal{H}_K$~\cite{faulstich2019numerical}.
For more details and numerical investigations on the CAS choice we refer the reader to \cite{faulstich2019numerical}.

\section{Analysis of the TCC Method}
\label{Sec:Analysis}
We focus here on the mathematical analysis of the TCC method for a finite spin-orbital set, i.e., $K<\infty$.
Several caveats of the limit process $K\to\infty$ are subsequently addressed, but a full investigation is relegated to future work.

First, we show the consistency of the TCC method, in the sense that exact solutions of the Schr\"odinger equation are reproduced.
We denote $\psi_* = e^{T_*^{\mathrm{FCI}}} \phi_0$ the exact solution on $\mathcal H_K$. 
We split the amplitudes such that $T_*^{\mathrm{FCI}} = T_*^{\mathrm{CAS}} + T_*^{\mathrm{ext}}$ with $t_*^{\mathrm{CAS}}\in\mathcal{V}_{\mathrm{CAS}}$ and $t_*^{\mathrm{ext}}\in\mathcal{V}_{\mathrm{ext}}$.
\begin{theorem}
Let $E$ be any eigenvalue of $H$ and assume $\psi_*$ satisfies the Schr\"odinger equation. 
Then $f(t_*^{\mathrm{ext}};t_*^{\mathrm{CAS}})=0$ and $E = \mathcal E(t_*^{\mathrm{ext}};t_*^{\mathrm{CAS}})$.
\label{Thm:Implication}
\end{theorem}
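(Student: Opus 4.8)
The plan is to exploit the commutativity of the cluster algebra $\mathcal{C}_K$, which was established earlier from the fermionic anticommutation relations, in order to recombine the two exponentials $e^{T_*^{\mathrm{ext}}}e^{T_*^{\mathrm{CAS}}}$ into the single full-CI exponential $e^{T_*^{\mathrm{FCI}}}$ and thereby reduce both assertions to the Schr\"odinger equation itself. Since we work in the finite-dimensional setting $K<\infty$, every cluster operator is nilpotent, so the exponential series terminate and the operators $e^{\pm T}$ are bounded and invertible; this makes all operator manipulations below unambiguous.

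First I would observe that because $T_*^{\mathrm{CAS}},T_*^{\mathrm{ext}}\in\mathcal{C}_K$ commute, their exponentials commute and satisfy
\begin{equation*}
e^{T_*^{\mathrm{ext}}}e^{T_*^{\mathrm{CAS}}} = e^{T_*^{\mathrm{CAS}}+T_*^{\mathrm{ext}}} = e^{T_*^{\mathrm{FCI}}}, \qquad e^{-T_*^{\mathrm{CAS}}}e^{-T_*^{\mathrm{ext}}} = e^{-T_*^{\mathrm{FCI}}},
\end{equation*}
so that $e^{T_*^{\mathrm{ext}}}e^{T_*^{\mathrm{CAS}}}\phi_0 = \psi_*$ and the second factor is the inverse of the first. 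Next I would apply the similarity transformation to the Schr\"odinger equation $H\psi_* = E\psi_*$: left-multiplying by $e^{-T_*^{\mathrm{FCI}}}$ and inserting $\psi_* = e^{T_*^{\mathrm{FCI}}}\phi_0$ yields the central identity
\begin{equation*}
e^{-T_*^{\mathrm{CAS}}}e^{-T_*^{\mathrm{ext}}} H e^{T_*^{\mathrm{ext}}}e^{T_*^{\mathrm{CAS}}}\phi_0 = E\,\phi_0.
\end{equation*}
Finally I would test this identity against the basis $\mathcal{B}_K$. Pairing with $\phi_\mu$ for $\mu\in\mathcal{J}_{\mathrm{ext}}$ and invoking the $L^2$-orthonormality of the Slater determinants gives $(f(t_*^{\mathrm{ext}};t_*^{\mathrm{CAS}}))_\mu = E\langle\phi_\mu,\phi_0\rangle = 0$, while pairing with $\phi_0$ gives $\mathcal{E}(t_*^{\mathrm{ext}};t_*^{\mathrm{CAS}}) = E\langle\phi_0,\phi_0\rangle = E$, which are precisely the two claimed statements.

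The argument is essentially algebraic, so the only points requiring care are the operator-theoretic justifications: that $e^{T_*^{\mathrm{FCI}}}$ is invertible with inverse $e^{-T_*^{\mathrm{FCI}}}$, and that the splitting $e^{-T_*^{\mathrm{FCI}}} = e^{-T_*^{\mathrm{CAS}}}e^{-T_*^{\mathrm{ext}}}$ is legitimate; both follow from nilpotency together with commutativity of $\mathcal{C}_K$. I do not expect a genuine obstacle at $K<\infty$. The only delicate conceptual point, which would surface in the limit $K\to\infty$ rather than here, is the mapping behavior of $H$ between $H^1$ and $H^{-1}$ and the resulting distinction between the $L^2$ inner product and the dual pairing flagged in Remark~\ref{Rmk:Gelfand}; in the present finite-dimensional regime these analytic concerns do not arise and every pairing above is an ordinary inner product.
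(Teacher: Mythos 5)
Your algebraic manipulations are fine, but the argument rests on reading the hypothesis as the \emph{strong} eigenvalue equation $H\psi_*=E\psi_*$ in $H^{-1}$, and that is not what the theorem can assume in the paper's setting. There, $\psi_*=e^{T_*^{\mathrm{FCI}}}\phi_0$ is ``the exact solution on $\mathcal{H}_K$'', i.e., an eigenpair of the \emph{projected} (FCI) problem \eqref{eq:FCI}: the available hypothesis is only the Galerkin form $\langle\psi',(H-E)\psi_*\rangle=0$ for all $\psi'\in\mathcal{H}_K$ (this is also how the remark following the theorem uses it, as a necessary condition for solving ``the Schr\"odinger equation on $\mathcal H_K$''). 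Under this hypothesis $(H-E)\psi_*$ is a generally nonzero element of $H^{-1}$ that merely annihilates $\mathcal{H}_K$ in the duality pairing, so your ``central identity'' $e^{-T_*^{\mathrm{CAS}}}e^{-T_*^{\mathrm{ext}}}He^{T_*^{\mathrm{ext}}}e^{T_*^{\mathrm{CAS}}}\phi_0=E\phi_0$, obtained by left-multiplying the strong equation, does not follow: it holds only after projecting back onto $\mathcal{H}_K$. Your closing claim that these concerns ``do not arise'' for finite $K$ is exactly where the gap sits: finiteness of $\mathcal{H}_K$ does not make $\mathcal{H}_K$ an invariant subspace of $H$, which still maps $\mathcal{H}_K$ into the infinite-dimensional dual $H^{-1}$; a genuine eigenfunction of the Coulomb Hamiltonian lying in a finite FCI space essentially never exists, so under your reading the consistency statement would be vacuous.

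The missing device --- and the actual content of the paper's short proof --- is to keep the equation in weak form and move the inverse exponentials onto the \emph{test function} as adjoints. For $\mu\in\mathcal{J}_{\mathrm{ext}}$ take $\psi'=e^{-(T_*^{\mathrm{ext}})^\dagger}e^{-(T_*^{\mathrm{CAS}})^\dagger}\phi_\mu$; since de-excitation operators map $\mathcal{H}_K$ into itself, $\psi'\in\mathcal{H}_K$, and the Galerkin hypothesis gives
\begin{equation*}
0=\langle\psi',(H-E)\psi_*\rangle
=\langle\phi_\mu,\,e^{-T_*^{\mathrm{CAS}}}e^{-T_*^{\mathrm{ext}}}(H-E)e^{T_*^{\mathrm{ext}}}e^{T_*^{\mathrm{CAS}}}\phi_0\rangle
=(f(t_*^{\mathrm{ext}};t_*^{\mathrm{CAS}}))_\mu~,
\end{equation*}
where the $E$-term drops because $\langle\phi_\mu,\phi_0\rangle=0$; the same choice with $\phi_0$ in place of $\phi_\mu$ yields $E=\mathcal{E}(t_*^{\mathrm{ext}};t_*^{\mathrm{CAS}})$. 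Your commutativity observations ($e^{T_*^{\mathrm{ext}}}e^{T_*^{\mathrm{CAS}}}=e^{T_*^{\mathrm{FCI}}}$, inverses given by negated exponents, nilpotency) are correct and are tacitly used in this computation too, but by themselves they cannot substitute for the adjoint/test-function step, which is precisely what lets the proof run on the projected equation rather than on an operator identity.
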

\begin{proof}
Let $\mu\in \mathcal J_\mathrm{ext}$ and choose $\psi'= e^{-(T_*^{\mathrm{ext}})^\dagger} e^{-(T_*^{\mathrm{CAS}\dagger})} \phi_\mu \in\mathcal{H}_K$. 
By assumption
\begin{align*}
0 = \langle \psi',(H-E) \psi_*\rangle  =\langle \phi_\mu,  e^{-T_*^\mathrm{CAS}} e^{-T^{\mathrm{ext}}_*} (H-E) e^{T^{\mathrm{ext}}_*} e^{T_*^\mathrm{CAS}} \phi_0 \rangle = (f(t^{\mathrm{ext}}_*;t_*^{\mathrm{CAS}}))_\mu~.  
\end{align*}
Inserting instead $\psi' = e^{-(T_*^{\mathrm{ext}})^\dagger} e^{-(T_*^{\mathrm{CAS}\dagger})} \phi_0 \in  \mathcal{H}_K$ gives $E= \mathcal E(t^{\mathrm{ext}}_*;t_*^{\mathrm{CAS}})$.
\end{proof}
\begin{rmk}

An important observation is that tailoring the CC method with a FCI solution on the CAS, i.e., a solution that corresponds to $t^{\rm CAS}_{\rm FCI}$, does not necessarily reproduce the FCI solution on $\mathcal{H}_K$.
More precisely, let $f(t^{\rm ext};t^{\rm CAS}_{\rm FCI})= 0$ then $\psi_*^{\rm (TCC)} = e^{T^{\rm ext}}e^{T^{\rm CAS}_{\rm FCI}}\phi_0$ is not necessarily a minimizer of Eq.~\eqref{eq:Rayleigh-Ritz variational principle} and does therewith in general not fulfill the Schr\"odinger equation.
However, Theorem~\ref{Thm:Implication} shows that $f(t^{\mathrm{ext}};t^{\mathrm{CAS}})=0$ is a necessary condition for $\psi = e^{T^{\rm ext}} e^{T^{\mathrm{CAS}}} \phi_0$ to solve the Schr\"odinger equation on $\mathcal H_K$. 
In the continuous formulation of the traditional CC theory, equivalence has been proven in \cite{rohwedder2013continuous} see Theorem 5.3.
Equivalence for the projected CC method has been shown in \cite{laestadius2019coupled} see Section~2.2., using~\cite{monkhorst1977calculation}.
\end{rmk}

We emphasize that the CAS part $T_*^{\mathrm{CAS}}$ of the exact cluster operator $T_*^{\mathrm{FCI}}$ is not equal to the cluster operator that corresponds to the FCI solution on $\mathcal{H}_{\mathrm{CAS}}$. 
The CAS amplitudes $((t_*^{\mathrm{CAS}})_\mu)_{\mu\in\mathcal{J}_{\mathrm{CAS}}}$ on $\mathcal{H}_{K}$ are solutions of equations that depend on the external amplitudes. 
The FCI solution $\psi_{\mathrm{CAS}}^{(\mathrm{FCI})}=e^{T_{\mathrm{FCI}}^{\mathrm{CAS}}}\phi_0$ on $\mathcal{H}_{\mathrm{CAS}}$, however, depends on the Hamilton operator projected onto the CAS space. 
Hence, in general $T_{\mathrm{FCI}}^{\mathrm{CAS}}\neq T_*^\mathrm{CAS}$.
\begin{rmk}
    Theorem~\ref{Thm:Implication} does not imply \emph{local uniqueness} of $t_*\in\mathcal{V}_\text{ext}$, even if $t_*^\text{FCI}$ is locally unique.
\end{rmk}

Throughout Subsection \ref{subsec:LocalUniqueness} we consider a fixed and sufficiently good CAS solution, i.e., $\phi_{\rm CAS}\approx \psi_{\rm CAS}^{\rm (FCI)}\approx P_{\mathcal{V}_{\rm CAS}}\psi_*$.
As a consequence we will simplify the notation by neglecting the parametric dependency of $f$ and $\mathcal{E}$ on $t^{\mathrm{CAS}}$.
We also highlight that the following analysis holds for any TCC scheme, but in particular for TNS-TCC schemes like the DMRG-TCCSD method.

\subsection{Local Uniqueness and Residual Bounds}
\label{subsec:LocalUniqueness}
The single-reference CC method, as well as the considered TCC method, are formulated as nonlinear Galerkin schemes.
This suggests the use of Zarantonello's lemma \cite{zaidler1990nonlinear} to characterize local uniqueness and residual bounds.
This is in line with previous studies on single-reference CC methods \cite{schneider2009analysis,rohwedder2013error,laestadius2018analysis}.
We state without proof:
\begin{lemma}[Local Version of Zarantonello's lemma \cite{zaidler1990nonlinear}]
\label{Th:Zarantonello}
Let $g:X \to X'$ be a map between a Hilbert space $(X,\langle\cdot,\cdot\rangle,\Vert \cdot \Vert)$ and its dual $X'$, and let $x_*\in B_{\delta}$ be a root, $g(x_*)=0$, where $B_{\delta}$ is an open ball of radius $\delta$ around $x_*$.
Assume that $g$ is Lipschitz continuous and locally strongly monotone in $B_{\delta}$ with constants $L > 0$ and $\gamma>0$, respectively. 

Then  
the root $x_*$ is unique in $B_{\delta}$. 
Indeed, there is a ball $C_{\varepsilon}\subset X'$ with $0\in C_{\varepsilon}$ such that the solution map $g^{-1}:C_{\varepsilon}\to X$ exists and is Lipschitz continuous, implying that the equation $g(x_*+x)=y$ has a unique solution $x=g^{-1}(y)-x_*$, depending continuously on $y$, with norm $\Vert x \Vert\leq \delta$.
Moreover, let $X_d\subset X$ be a closed subspace such that $x_*$ can be approximated sufficiently well, i.e., the distance $d(x_*,X_d)$ is sufficiently small. 
Then, the projected problem $g_d(x_d)=0$ has a unique solution $x_d\in X_d\cap B_{\delta}$ and 
\begin{equation*}
\Vert x_*-x_d \Vert\leq \frac{L}{\gamma} d(x_*,X_d)~.
\end{equation*}
\end{lemma}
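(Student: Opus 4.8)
The plan is to establish each conclusion sequentially, exploiting the two hypotheses—Lipschitz continuity with constant $L$ and local strong monotonicity with constant $\gamma$—which together give enough control to invert $g$ locally and to derive the quasi-optimality bound. First I would prove uniqueness in $B_\delta$: if $g(x_*) = g(x_{**}) = 0$ for two roots in the ball, then strong monotonicity gives $\gamma \Vert x_* - x_{**}\Vert^2 \leq \langle g(x_*) - g(x_{**}), x_* - x_{**}\rangle = 0$, forcing $x_* = x_{**}$. This is the cheap part and sets the template for the rest.

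Next I would address solvability of the perturbed equation $g(x_* + x) = y$ and the existence of the local inverse. The standard route is to recast the equation as a fixed-point problem: for a suitable relaxation parameter $\lambda > 0$, define the map $\Phi_y(x) = x - \lambda\bigl(g(x) - y\bigr)$ on $B_\delta$. Using the identity $\Vert \Phi_y(x_1) - \Phi_y(x_2)\Vert^2 = \Vert x_1 - x_2\Vert^2 - 2\lambda \langle g(x_1) - g(x_2), x_1 - x_2\rangle + \lambda^2 \Vert g(x_1) - g(x_2)\Vert^2$, monotonicity bounds the middle term below by $2\lambda\gamma \Vert x_1 - x_2\Vert^2$ and Lipschitz continuity bounds the last term above by $\lambda^2 L^2 \Vert x_1 - x_2\Vert^2$, so $\Phi_y$ is a contraction with factor $(1 - 2\lambda\gamma + \lambda^2 L^2)^{1/2} < 1$ for $0 < \lambda < 2\gamma/L^2$. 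Here I would identify $X'$ with $X$ via the Riesz isomorphism so that $g - y$ and $x$ live in the same space; this identification is what makes the contraction argument meaningful. The Banach fixed-point theorem then yields a unique solution for each $y$ in a sufficiently small ball $C_\varepsilon$ (where $\varepsilon$ is chosen so the fixed point stays inside $B_\delta$), and the Lipschitz continuity of $g^{-1}$ follows from the contraction estimate by a routine manipulation.

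The main obstacle I anticipate is the final quasi-optimality estimate $\Vert x_* - x_d \Vert \leq (L/\gamma)\, d(x_*, X_d)$ for the Galerkin-projected problem $g_d(x_d) = 0$, where $g_d = P_{X_d}^* \circ g|_{X_d}$ restricts both the argument and the test space to $X_d$. First one must establish that $g_d$ inherits strong monotonicity and Lipschitz continuity on $X_d$ with the same constants—this is immediate since restricting to a subspace only shrinks the supremum in the Lipschitz bound and preserves the monotonicity inequality—so that the existence argument above applies to yield a unique $x_d \in X_d \cap B_\delta$. The delicate step is the Céa-type bound. Let $y_d \in X_d$ be arbitrary; strong monotonicity gives $\gamma \Vert x_d - y_d\Vert^2 \leq \langle g_d(x_d) - g_d(y_d), x_d - y_d\rangle = \langle -g_d(y_d), x_d - y_d\rangle$, and since $x_d - y_d \in X_d$ the projection in $g_d$ is invisible against this test vector, so this equals $\langle g(x_*) - g(y_d), x_d - y_d\rangle \leq L\Vert x_* - y_d\Vert\,\Vert x_d - y_d\Vert$, yielding $\Vert x_d - y_d\Vert \leq (L/\gamma)\Vert x_* - y_d\Vert$. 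Combining with the triangle inequality $\Vert x_* - x_d\Vert \leq \Vert x_* - y_d\Vert + \Vert y_d - x_d\Vert \leq (1 + L/\gamma)\Vert x_* - y_d\Vert$ and taking the infimum over $y_d \in X_d$ gives the bound up to the constant $1 + L/\gamma$; recovering the sharper stated constant $L/\gamma$ requires choosing $y_d$ as the orthogonal projection of $x_*$ onto $X_d$ and exploiting orthogonality of $x_* - y_d$ to $X_d$ to cancel a term in the monotonicity estimate. Getting this constant exactly right, rather than the coarser $1 + L/\gamma$, is where the argument demands the most care.
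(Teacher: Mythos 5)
The paper itself states this lemma \emph{without proof} (it is imported from Zeidler's book), so your attempt can only be judged on its own merits. The first two parts are correct and standard: uniqueness from strong monotonicity is exactly right, and the relaxation map $\Phi_y(x)=x-\lambda\,(g(x)-y)$ (after Riesz identification) with contraction factor $\sqrt{1-2\lambda\gamma+\lambda^2L^2}<1$ for $0<\lambda<2\gamma/L^2$ is the classical Zarantonello argument. The Galerkin part, however, has two genuine gaps. First, existence of $x_d$ is \emph{not} immediate from $g_d$ inheriting the constants: Lipschitz continuity plus strong monotonicity on a ball does not by itself produce a root in that ball (an affine strongly monotone map whose root lies outside $X_d\cap B_\delta$ already shows this). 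This is precisely where the hypothesis that $d(x_*,X_d)$ is small must enter, and your argument never invokes it. The repair is to center your contraction at a best approximation $y_d\in X_d$, observe that $\Vert g_d(y_d)\Vert_{(X_d)'}\leq\Vert g(y_d)-g(x_*)\Vert_{X'}\leq L\,d(x_*,X_d)$ is a small residual, and check that the fixed point then stays inside $X_d\cap B_\delta$ only when $d(x_*,X_d)$ is small enough. (When the paper actually needs this discrete solvability, in the build-up to Theorem~\ref{thm:err_main}, it does exactly this kind of argument via Brouwer's fixed-point theorem, with radius $R=\delta-\kappa_d$ under the condition $\kappa_d\leq\gamma\delta/(\gamma+L)$.)

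Second, your route to the constant $L/\gamma$ fails. Your main computation gives $1+L/\gamma$, and your proposed refinement---take $y_d$ to be the orthogonal projection of $x_*$ and use orthogonality---gives, via Pythagoras, $\Vert x_*-x_d\Vert^2=\Vert x_*-y_d\Vert^2+\Vert y_d-x_d\Vert^2\leq(1+L^2/\gamma^2)\,d(x_*,X_d)^2$, i.e.\ the constant $\sqrt{1+L^2/\gamma^2}$, which is \emph{strictly larger} than $L/\gamma$ (recall $\gamma\leq L$ always). Orthogonality of $x_*-y_d$ to $X_d$ cancels nothing in your monotonicity estimate, because that estimate pairs the $g$-differences against $x_d-y_d\in X_d$, not against $x_*-y_d$. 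The correct device is to apply strong monotonicity directly to the pair $(x_*,x_d)$ (both lie in $B_\delta$) and split off the $X_d$-component by Galerkin orthogonality:
\begin{equation*}
\gamma\Vert x_*-x_d\Vert^2\leq\langle g(x_d)-g(x_*),x_d-x_*\rangle
=\langle g(x_d)-g(x_*),x_d-y_d\rangle+\langle g(x_d)-g(x_*),y_d-x_*\rangle~,
\end{equation*}
where the first term vanishes because $x_d-y_d\in X_d$, $\langle g(x_d),v_d\rangle=0$ for all $v_d\in X_d$, and $g(x_*)=0$, while the second term is at most $L\Vert x_d-x_*\Vert\,\Vert y_d-x_*\Vert$. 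Dividing by $\Vert x_*-x_d\Vert$ and taking the infimum over $y_d\in X_d$ yields exactly $\Vert x_*-x_d\Vert\leq(L/\gamma)\,d(x_*,X_d)$.
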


We emphasize that the above theorem depends strongly on the topology of the considered Hilbert space.
We already made the particular choice of $\Vert\cdot\Vert_{\mathcal{V}_{\mathrm{ext}}}$ to measure the dynamical correction. 
This is motivated by the fact that $(\varepsilon_\mu)_{\mu\in \mathcal{J}_{\mathrm{ext}}}$ is computationally accessible.
A major difference between the presented analysis and the single-reference CC case \cite{schneider2009analysis,rohwedder2013continuous,rohwedder2013error} is that the assumption of a HOMO-LUMO gap is no longer reasonable.
In the context of the TCC method it is assumed that $\mathscr B_\mathrm{CAS}$ and $\mathscr B_\mathrm{ext}$ are chosen such that 
$\lambda_{k+1}- \lambda_k>0$. 
We therefore introduce the CAS-ext gap between $\lambda_k$ and $\lambda_{k+1}$.
In analogy to previous literature on analysis of the CC theory, we denote the CAS-ext gap by $\varepsilon_0=\lambda_{k+1}-\lambda_{k}$. 
The assumption of a CAS-ext gap is reasonable under the assumption that $\mathcal{H}_\mathrm{CAS}$ captures all strong correlation such that the (one-particle) Fock operator's degenerate eigenstates are in the CAS.

Besides the single-particle spectral gap condition, we note that the Fock operator $F$ corresponds to a Hamilton operator with a particular potential $V_F$ in Eq.~\eqref{Eq:Hgen}. 
Consequently, with $V=V_F$ in Eq.~\eqref{eq:Gard} we assume 
\begin{equation}
\langle \psi,(F+e)\psi\rangle \geq c \Vert \psi\Vert_{H^1}^2~,\quad \forall \psi\in H^1~.
\label{eq:Fgard}
\end{equation}
For a further discussion on spectral gap and Gårding inequalities in CC theories we refer to~\cite{laestadius2019coupled}. Moreover, in agreement with Section \ref{sec:SE}, we suppose
\begin{equation}
|\langle \tilde\psi,F\psi\rangle |\leq C \Vert \tilde \psi\Vert_{H^1} \Vert \psi\Vert_{H^1}~,\quad\forall \psi,\tilde{\psi}\in H^1~.
\label{eq:Fbdd}
\end{equation}
One of the main assumption of this article can then be summarized:

\textbf{Assumption} (A). \textit{For the Fock operator $F$, Eqs.~\eqref{eq:Fgard} and \eqref{eq:Fbdd} hold and there exists a CAS-ext gap $\varepsilon_0= \lambda_(k+1) - \lambda_k>0$.}
\begin{rmk}
\label{rmk:extended_cas_ext_gap}
Note that a gap assumption between $\lambda_N$ and $\lambda_{k+1}$ is also possible, i.e., $\tilde\varepsilon_0=\lambda_{k+1}-\lambda_{N}$.
We shall refer to this as the extended CAS-ext gap.
The difference to $\varepsilon_0$ is that $\tilde\varepsilon_0$ is directly proportional to the size of the CAS, i.e., choosing a large CAS yields a large $\lambda_{k+1}$ and therewith a large value of $\tilde\varepsilon_0$. 
Consequently, this connects the following norm estimates with the CAS.
We point out that every following statement holds true for either gap condition, however, the constants involved may differ. 
\end{rmk}

The main argument for considering $\varepsilon_0$ (or $\tilde \varepsilon_0$) is that the following analysis holds not only for ground-state approximation schemes but also for excited state approximations, which is a major difference to the previous analyses of single-reference CC methods \cite{schneider2009analysis,rohwedder2013continuous,rohwedder2013error,laestadius2018analysis}.
In the TCC scheme, the single-reference CC method is used to add a dynamical correction to $\phi_{\mathrm{CAS}}\in\mathcal{H}_{\mathrm{CAS}}$ on the external space $\mathcal{H}_{\mathrm{ext}}$, i.e., it captures dynamical correlations between orbitals in $\mathcal{H}_{\mathrm{ext}}$ as well as dynamical correlations between orbitals in $\mathcal{H}_{\mathrm{CAS}}$ and $\mathcal{H}_{\mathrm{ext}}$.
This correction can be done for any wavefunction $\phi_{\mathrm{CAS}}\in\mathcal{H}_{\mathrm{CAS}}$, in particular also for approximations of excited states in $\mathcal{H}_{\mathrm{CAS}}$.
We emphasize that correlations between orbitals in $\mathscr{B}_{\mathrm{ext}}$ and $\mathscr{B}_{\mathrm{CAS}}$ are not considered when computing $\phi_{\mathrm{CAS}}$, which introduces a methodological error to the method \cite{faulstich2019numerical}.

Note that Assumption (A) is an assumption on the single-particle spectrum. 
This allows us to establish $\varepsilon_{\mu} > \varepsilon_0$ for all $\mu\in\mathcal{J}_{\rm ext}$, however, it does not necessarily imply $\varepsilon_{\sigma} \leq \varepsilon_{\mu}$ for $\sigma \in \mathcal{J}_{\rm CAS}$ and $\mu \in \mathcal{J}_{\rm ext}$. Thus, under Assumption~(A) we might not have a spectral gap in the $N$-particle space.

Next, we introduce the Fock norm on $\mathcal{H}_{\rm ext}$.

\begin{definition}
The map $
\Vert \cdot \Vert_F : \mathcal{H}_{\rm ext} \to \mathbb{R}_+$ is given by 
$\phi \mapsto \sqrt{\langle \phi , (F-\Lambda_0) \phi  \rangle} $.
\end{definition}

\begin{lemma}
\label{Lemma:H1}
Suppose Assumption (A), then $\Vert \phi \Vert_F=\sqrt{\langle \phi,(F-\Lambda_0)\phi\rangle}$ is a norm on $\mathcal H_{\mathrm{ext}}$ and 
\begin{equation}
\langle T\phi_0, ( F - \Lambda_0) T\phi_0 \rangle 
\geq 
\eta \Vert T\phi_0\Vert_{H^1}^2~,\quad \forall t\in\mathcal V_{\mathrm{ext}}~,
\label{eq:GapGaarding}
\end{equation}
where $\eta>0$ is defined in the proof. 
Moreover, $\Vert \cdot \Vert_F$ is equivalent to $\Vert \cdot\Vert_{H^1}$ on $\mathcal H_{\mathrm{ext}}$.
\end{lemma}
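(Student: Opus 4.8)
The plan is to diagonalize the quadratic form $\phi\mapsto\langle\phi,(F-\Lambda_0)\phi\rangle$ in the Slater-determinant basis of $\mathcal H_{\mathrm{ext}}$ and then to upgrade the resulting $L^2$-coercivity to $H^1$-coercivity by means of G\aa rding's inequality. Since $\{\phi_\mu:\mu\in\mathcal J_{\mathrm{ext}}\}$ is an $L^2$-orthonormal basis of $\mathcal H_{\mathrm{ext}}$, I would first write an arbitrary $\phi\in\mathcal H_{\mathrm{ext}}$ as $\phi=\sum_{\mu\in\mathcal J_{\mathrm{ext}}}c_\mu\phi_\mu$. Using $(F-\Lambda_0)\phi_\mu=\varepsilon_\mu\phi_\mu$ and orthonormality,
\begin{equation*}
\langle\phi,(F-\Lambda_0)\phi\rangle=\sum_{\mu\in\mathcal J_{\mathrm{ext}}}\varepsilon_\mu|c_\mu|^2\geq\varepsilon_0\sum_{\mu\in\mathcal J_{\mathrm{ext}}}|c_\mu|^2=\varepsilon_0\Vert\phi\Vert_{L^2}^2,
\end{equation*}
where I invoke the bound $\varepsilon_\mu\geq\varepsilon_0>0$ for all $\mu\in\mathcal J_{\mathrm{ext}}$ established above from the CAS-ext gap. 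This immediately yields positive definiteness: $\Vert\phi\Vert_F=0$ forces all $c_\mu=0$, hence $\phi=0$, so together with the bilinearity and symmetry of $(\phi,\psi)\mapsto\langle\phi,(F-\Lambda_0)\psi\rangle$ the map $\Vert\cdot\Vert_F$ is a genuine norm on $\mathcal H_{\mathrm{ext}}$, proving the first claim. It also records the $L^2$-gap estimate $\langle\phi,(F-\Lambda_0)\phi\rangle\geq\varepsilon_0\Vert\phi\Vert_{L^2}^2$, which I will use below.

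For the norm equivalence I would bound $\Vert\cdot\Vert_F$ against $\Vert\cdot\Vert_{H^1}$ from both sides. The upper bound is routine: from boundedness \eqref{eq:Fbdd} and $\Vert\phi\Vert_{L^2}\leq\Vert\phi\Vert_{H^1}$ one gets $\Vert\phi\Vert_F^2=\langle\phi,F\phi\rangle-\Lambda_0\Vert\phi\Vert_{L^2}^2\leq(C+|\Lambda_0|)\Vert\phi\Vert_{H^1}^2$. The lower bound is the core of the lemma and simultaneously gives \eqref{eq:GapGaarding}, because $T\phi_0\in\mathcal H_{\mathrm{ext}}$ whenever $t\in\mathcal V_{\mathrm{ext}}$. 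G\aa rding's inequality \eqref{eq:Fgard} only delivers $\langle\phi,(F-\Lambda_0)\phi\rangle\geq c\Vert\phi\Vert_{H^1}^2-(e+\Lambda_0)\Vert\phi\Vert_{L^2}^2$, in which the $L^2$-term may carry the wrong sign. The idea is to absorb this term using the $L^2$-gap estimate from the first paragraph: for $\theta\in[0,1]$, a convex combination of the two lower bounds gives
\begin{equation*}
\langle\phi,(F-\Lambda_0)\phi\rangle\geq\theta c\Vert\phi\Vert_{H^1}^2+\big[(1-\theta)\varepsilon_0-\theta(e+\Lambda_0)\big]\Vert\phi\Vert_{L^2}^2.
\end{equation*}
Choosing $\theta$ so that the bracket is nonnegative --- concretely $\theta=\varepsilon_0/(\varepsilon_0+\max\{0,e+\Lambda_0\})$ --- discards the $L^2$-term and leaves $\langle\phi,(F-\Lambda_0)\phi\rangle\geq\eta\Vert\phi\Vert_{H^1}^2$ with $\eta=\theta c>0$. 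Combined with the upper bound, this establishes the equivalence of $\Vert\cdot\Vert_F$ and $\Vert\cdot\Vert_{H^1}$ on $\mathcal H_{\mathrm{ext}}$ and proves \eqref{eq:GapGaarding}.

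The one genuine subtlety, and the step I expect to require care, is the sign bookkeeping in the convex-combination argument: if $e+\Lambda_0\leq0$ the G\aa rding bound already has a favourable $L^2$-term and one may simply take $\theta=1$, whereas if $e+\Lambda_0>0$ one must check that the stated $\theta$ lies in $(0,1]$ and makes the bracket vanish, which it does by construction. I would also remark that since $K<\infty$ the space $\mathcal H_{\mathrm{ext}}$ is finite-dimensional, so norm equivalence holds abstractly; the real content is the \emph{explicit} constant $\eta=c\,\varepsilon_0/(\varepsilon_0+\max\{0,e+\Lambda_0\})$, whose dependence only on $c$, $e$, $\Lambda_0$ and the gap $\varepsilon_0$ is what makes it usable in the subsequent error analysis and in the $K\to\infty$ discussion.
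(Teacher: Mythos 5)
Your proposal is correct and follows essentially the same route as the paper's proof: the $L^2$-gap estimate $\langle\phi,(F-\Lambda_0)\phi\rangle\geq\varepsilon_0\Vert\phi\Vert_{L^2}^2$ from Assumption (A), combined convexly with G\aa rding's inequality so that the $L^2$-term cancels, yielding exactly the paper's constant $\eta=qc$ with $q=\varepsilon_0/(\varepsilon_0+\Lambda_0+e)$, and the upper bound from \eqref{eq:Fbdd}. Your explicit handling of the sign of $e+\Lambda_0$ via $\max\{0,e+\Lambda_0\}$ is a minor (harmless) refinement; the paper instead notes that G\aa rding's inequality already forces $\Lambda_0+e$ to be nonnegative, so its $q$ lies in $(0,1]$ by construction.
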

\begin{proof}
The assumption of a Gårding inequality of the Fock operator and a spectral gap (Eq.~\eqref{eq:Fbdd}) imply \eqref{eq:GapGaarding}. 
The derivation is given by Lemma 11 in \cite{laestadius2018analysis} and is here included to highlight the importance of a CAS-ext gap.
Before starting the proof, we note that Eq.~\ref{eq:Fgard} implies $e\geq \Lambda_0$, since $\Lambda_0$ is the smallest eigenvalue of $F$ in $\mathcal{H}_K$.
Then we set $q=\varepsilon_0/(\varepsilon_0+\Lambda_0+e)>0$ and $\eta = qc$, where $e,c$ are the constants from the Gårding inequality~\eqref{eq:Fgard}.
Assumption (A) yields $\langle T\phi_0, ( F - \Lambda_0) T\phi_0 \rangle \geq \varepsilon_0 \Vert T\phi_0 \Vert_{L^2}^2 $, for $t\in\mathcal{V}_{\mathrm{ext}}$.
The Gårding inequality~\eqref{eq:Fgard} implies
\begin{equation*}
\begin{aligned}
&\langle T\phi_0, ( F - \Lambda_0) T\phi_0 \rangle\\
&=
q \langle T\phi_0, ( F +e) T\phi_0 \rangle
-q \langle T\phi_0, (\Lambda_0+e) T\phi_0 \rangle
+(1-q) \langle T\phi_0, ( F - \Lambda_0) T\phi_0 \rangle\\
&\geq
qc \Vert T\phi_0\Vert^2_{H^1}+((1-q)\varepsilon_0-q( \Lambda_0+e)) \Vert T\phi_0\Vert^2_{L^2}
=
\eta \Vert T\phi_0\Vert^2_{H^1}
~.
\end{aligned}
\end{equation*}
Therefore $\Vert \phi \Vert_F=0$ if and only if $\phi=0$. 
The self-adjointness of $F$ gives the triangle inequality and the homogeneity follows immediately. 
Hence, $\Vert \phi \Vert_F$ is a norm.
The proof is completed by noting that 
Eq.~\eqref{eq:GapGaarding} and the boundedness of $F$ (Eq.~\eqref{eq:Fbdd}) yield the equivalence of $\Vert \cdot \Vert_F$ and $\Vert \cdot\Vert_{H^1}$ on $\mathcal H_{\mathrm{ext}}$.
\end{proof}

\begin{proposition}
\label{prop:Focknorm_ExtNorm}
For $t\in\mathcal{V}_{\rm ext}$ $
\Vert t \Vert_{\mathcal{V}_{\rm ext}}=
\Vert T\phi_0 \Vert_F$, and in particular $\Vert t \Vert_{\mathcal{V}_{\rm ext}} \sim \Vert T\phi_0 \Vert_{H^1}$.
\end{proposition}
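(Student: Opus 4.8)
The plan is to establish the identity by a direct spectral computation in the Slater-determinant basis, and then to deduce the equivalence with the $H^1$-norm by invoking Lemma~\ref{Lemma:H1}. The whole argument is short; the only care required is in the bookkeeping.

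First I would expand $T\phi_0$ in the orthonormal basis. Since $t\in\mathcal{V}_{\mathrm{ext}}$, we have $T=\sum_{\mu\in\mathcal{J}_{\mathrm{ext}}}t_\mu X_\mu$ and hence $T\phi_0=\sum_{\mu\in\mathcal{J}_{\mathrm{ext}}}t_\mu X_\mu\phi_0$, where each $X_\mu\phi_0=\pm\phi_\mu$ is, up to a sign, a Slater determinant $\phi_\mu$ with $\mu\in\mathcal{J}_{\mathrm{ext}}$ (cf.\ Proposition~\ref{rmk:isometricallt isomorphic}). The key ingredient is that these are $F$-eigenfunctions: recall $F\phi_\mu=(\Lambda_0+\varepsilon_\mu)\phi_\mu$, so that $(F-\Lambda_0)\phi_\mu=\varepsilon_\mu\phi_\mu$. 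Using the $L^2$-orthonormality of the $\{\phi_\mu\}$ and that the amplitudes are real, I would then compute directly
\begin{equation*}
\Vert T\phi_0\Vert_F^2 = \langle T\phi_0,(F-\Lambda_0)T\phi_0\rangle = \sum_{\mu\in\mathcal{J}_{\mathrm{ext}}}\varepsilon_\mu|t_\mu|^2 = \Vert t\Vert_{\mathcal{V}_{\mathrm{ext}}}^2~,
\end{equation*}
where the sign factors drop out because only $|t_\mu|^2$ appears and the off-diagonal terms vanish by orthogonality. This establishes the first claimed identity $\Vert t\Vert_{\mathcal{V}_{\mathrm{ext}}}=\Vert T\phi_0\Vert_F$.

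For the equivalence $\Vert t\Vert_{\mathcal{V}_{\mathrm{ext}}}\sim\Vert T\phi_0\Vert_{H^1}$, I would observe that $T\phi_0\in\mathcal{H}_{\mathrm{ext}}$: each $\phi_\mu$ with $\mu\in\mathcal{J}_{\mathrm{ext}}$ lies in $\mathcal{H}_{\mathrm{ext}}$ by definition of $\mathcal{J}_{\mathrm{ext}}$, and these determinants span $\mathcal{H}_{\mathrm{ext}}$ by $L^2$-orthonormality. Hence Lemma~\ref{Lemma:H1}, which asserts $\Vert\cdot\Vert_F\sim\Vert\cdot\Vert_{H^1}$ on $\mathcal{H}_{\mathrm{ext}}$, applies to $T\phi_0$ and, combined with the identity just proved, yields the equivalence.

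The main point to be careful about is essentially bookkeeping rather than a genuine difficulty: one must justify that $T\phi_0$ genuinely belongs to $\mathcal{H}_{\mathrm{ext}}$ so that Lemma~\ref{Lemma:H1} can be invoked, and that the passage from the excitation operators $X_\mu$ to the determinants $\phi_\mu$, with its sign ambiguity, does not disturb the computation. Both points follow from Proposition~\ref{rmk:isometricallt isomorphic} together with the spectral relation $F\phi_\mu=(\Lambda_0+\varepsilon_\mu)\phi_\mu$, so no serious technical work remains beyond the direct computation above.
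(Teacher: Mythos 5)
Your proof is correct and follows essentially the same route as the paper: a direct computation of $\Vert T\phi_0\Vert_F^2$ in the orthonormal determinant basis using the spectral fact $F\phi_\mu=(\Lambda_0+\varepsilon_\mu)\phi_\mu$ (the paper phrases this equivalently through the commutator identity $[F,X_\nu]=\varepsilon_\nu X_\nu$ of Lemma~\ref{Lemma:FockOperator}), followed by Lemma~\ref{Lemma:H1} for the $H^1$-equivalence. Your explicit verification that $T\phi_0\in\mathcal{H}_{\mathrm{ext}}$ before invoking Lemma~\ref{Lemma:H1} is a detail the paper leaves implicit, but it is not a different argument.
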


\begin{rmk}
Note that the spectral (CAS-ext) gap assumption of $F$ gives 
\begin{equation*}
\begin{aligned}
\Vert T \phi_0 \Vert_F^2 &=\langle T\phi_0 , (F-\Lambda_0) T\phi_0 \rangle \geq \varepsilon_0 \Vert T\phi_0 \Vert_{L^2}^2~,
\end{aligned}
\end{equation*}
which is the same as the direct estimate
$\Vert t \Vert_{\mathcal{V}_{\rm ext}}^2  = \sum_{\mu \in \mathcal I_\mathrm{ext}} \varepsilon_\mu t_\mu^2  \geq \varepsilon_0 \Vert t\Vert_2^2$. 
This makes the Fock norm natural in the following analysis.
\end{rmk}

Two useful facts regarding the Fock operator and excitation operators are stated in the following lemma (for a proof see~\cite{helgaker2014molecular}).
\begin{lemma}
Let $F$ be the Fock operator, $\mu=\binom{A_1,...,A_{\vert \mu\vert}}{I_1,...,I_{\vert \mu\vert}}$ and $T=\sum_{\mu\in\mathcal{J}}t_\mu X_\mu$. 
Then 
\begin{equation*}
[F,X_{\mu}]=\sum_{j=1}^{{\vert \mu\vert}}(\lambda_{A_j}-\lambda_{I_j})X_{\mu}=\varepsilon_{\mu}X_{\mu}
\quad {\rm and} \quad
e^{-T}Fe^{T}=F+[F,T]~.
\end{equation*}
\label{Lemma:FockOperator}
\end{lemma}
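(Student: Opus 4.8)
The plan is to prove the two identities in turn, first establishing $[F,X_\mu]=\varepsilon_\mu X_\mu$ and then bootstrapping from it to obtain the similarity transform $e^{-T}Fe^T=F+[F,T]$. First I would pass to the second-quantized representation of the Fock operator. Since the spin-orbitals $\chi_p$ are eigenfunctions of the one-particle Fock operator with eigenvalues $\lambda_p$, the $N$-particle operator is diagonal, $F=\sum_p\lambda_p a_p^\dagger a_p$, and a single-excitation operator is $X_I^A=a_A^\dagger a_I$. I would then reduce the computation to a single excitation using the Leibniz rule for commutators, $[F,AB]=[F,A]B+A[F,B]$, which applied iteratively to $X_\mu=\prod_{j=1}^{|\mu|}X_{I_j}^{A_j}$ gives $[F,X_\mu]=\sum_{j}X_{I_1}^{A_1}\cdots[F,X_{I_j}^{A_j}]\cdots X_{I_{|\mu|}}^{A_{|\mu|}}$.

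The core computation is the single-excitation commutator. Using the fermionic anticommutation relations $[a_i,a_j^\dagger]_+=\delta_{ij}$ and $[a_i^\dagger,a_j^\dagger]_+=[a_i,a_j]_+=0$, a short manipulation yields the standard number-operator identity $[a_p^\dagger a_p,a_A^\dagger a_I]=(\delta_{pA}-\delta_{pI})a_A^\dagger a_I$, whence $[F,X_I^A]=(\lambda_A-\lambda_I)X_I^A$. Substituting this into the Leibniz expansion above, each summand contributes the factor $(\lambda_{A_j}-\lambda_{I_j})$ multiplying the unchanged product $X_\mu$, so $[F,X_\mu]=\big(\sum_{j=1}^{|\mu|}(\lambda_{A_j}-\lambda_{I_j})\big)X_\mu=\varepsilon_\mu X_\mu$, which is the first claim.

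For the second identity I would invoke the Hadamard (Baker--Campbell--Hausdorff) expansion $e^{-T}Fe^T=F+[F,T]+\tfrac{1}{2!}[[F,T],T]+\cdots$. By linearity and the first identity, $[F,T]=\sum_{\mu\in\mathcal J}t_\mu\varepsilon_\mu X_\mu$ is again a linear combination of excitation operators. Since excitation operators mutually commute (as noted earlier, $\mathcal C_K$ is a commutative algebra), every nested commutator from the second term onward vanishes: $[[F,T],T]=\sum_{\mu,\nu}t_\mu\varepsilon_\mu t_\nu[X_\mu,X_\nu]=0$, and likewise all higher terms. The series therefore truncates after two terms, giving $e^{-T}Fe^T=F+[F,T]$. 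The only point demanding genuine care is the bookkeeping of signs in the anticommutator manipulation that produces the single-excitation commutator; convergence of the exponential series is a non-issue here, because in the finite-dimensional setting $K<\infty$ the excitation algebra is nilpotent (ranks are bounded by $N$) and the Hadamard series terminates identically rather than merely converging.
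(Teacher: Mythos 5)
Your proof is correct. The paper itself states this lemma without proof, deferring to the standard reference (Helgaker et al.), and your argument---writing $F=\sum_p\lambda_p a_p^\dagger a_p$ in second quantization, computing the single-excitation commutator from the anticommutation relations, extending by the Leibniz rule, and truncating the Hadamard expansion after $[F,T]$ because $[F,T]$ is again a cluster operator and the excitation algebra is commutative (and nilpotent, so convergence is trivial)---is exactly the standard proof that reference supplies, including the correct reliance on $F$ being diagonal in the canonical Hartree--Fock basis, which the paper itself flags in Remark~\ref{rmk:OnTheStructureOfFockOperator}.
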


\begin{proof}[Proof of Proposition~\ref{prop:Focknorm_ExtNorm}]
Let $t\in \mathcal{V}_{\rm ext}$, we find by means of Lemma~\ref{Lemma:FockOperator}
\begin{equation*}
\begin{aligned}
\Vert T\phi_0 \Vert_F^2 
&= \langle T \phi_0 , (F-\Lambda_0) T\phi_0 \rangle
= \sum_{\mu,\nu\in\mathcal{J}_{\rm ext}} t_{\mu} t_{\nu} \langle \phi_{\mu} , (F-\Lambda_0) \phi_{\nu}  \rangle\\
&= \sum_{\mu,\nu\in\mathcal{J}_{\rm ext}} t_{\mu}  t_{\nu} \langle \phi_{\mu} , [F,X_{\nu}] \phi_0  \rangle
= \sum_{\mu\in\mathcal{J}_{\rm ext}} t_{\mu}^2 \varepsilon_{\mu} 
= \Vert t \Vert_{\mathcal{V}_{\rm ext}}^2~.
\end{aligned}
\end{equation*}
$~$
\end{proof}

\begin{rmk}
\label{rmk:OnTheStructureOfFockOperator}
The first formula of Lemma~\ref{Lemma:FockOperator} uses the fact that $F$ is diagonal, i.e., a finite $K$.  However, the fact that $[F,X_\mu]$ is a cluster operator can be proven using only the $F$-orthogonality of an occupied $\chi_I$ and an unoccupied $\chi_A$.  Thus, while in the infinite-dimensional case the first statement certainly fails due to the continuous spectrum, it is reasonable to expect that the second statement still stands.
\end{rmk}

\begin{theorem}
Under Assumption (A) the norm equivalence $\Vert T \Vert_{\mathcal{B}\left(H^1\right)}\sim \Vert  t \Vert_{\mathcal{V}_{\mathrm{ext}}}$ holds for $t\in \mathcal{V}_{\mathrm{ext}}$ .
\label{th:OperatorNormEquiv}
\end{theorem}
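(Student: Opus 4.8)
The plan is to prove the two one-sided estimates separately, using that every image $T\psi$ lands in $\mathcal H_{\mathrm{ext}}$, where the Fock norm and the $H^1$ norm are equivalent by Lemma~\ref{Lemma:H1}. Indeed, for $t\in\mathcal V_{\mathrm{ext}}$ each $X_\mu$ with $\mu\in\mathcal J_{\mathrm{ext}}$ carries at least one electron into $\mathscr B_{\mathrm{ext}}$ while only removing occupied (CAS) orbitals, so $X_\mu\phi_\nu\in\mathcal H_{\mathrm{ext}}$ (or vanishes) for every determinant $\phi_\nu$, whence $T\psi\in\mathcal H_{\mathrm{ext}}$ for all $\psi\in\mathcal H_K$. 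Consequently $\Vert T\psi\Vert_{H^1}\sim\Vert T\psi\Vert_F$ throughout, which is what lets the Fock-energy bookkeeping drive the whole argument.

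The lower bound $\Vert t\Vert_{\mathcal V_{\mathrm{ext}}}\lesssim\Vert T\Vert_{\mathcal B(H^1)}$ is immediate. By Proposition~\ref{prop:Focknorm_ExtNorm} and Lemma~\ref{Lemma:H1},
\[
\Vert t\Vert_{\mathcal V_{\mathrm{ext}}}=\Vert T\phi_0\Vert_F\lesssim\Vert T\phi_0\Vert_{H^1}\leq\Vert T\Vert_{\mathcal B(H^1)}\,\Vert\phi_0\Vert_{H^1},
\]
and $\Vert\phi_0\Vert_{H^1}$ is a fixed positive constant.

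The harder direction is the upper bound $\Vert T\Vert_{\mathcal B(H^1)}\lesssim\Vert t\Vert_{\mathcal V_{\mathrm{ext}}}$, i.e. $\Vert T\psi\Vert_{H^1}\lesssim\Vert t\Vert_{\mathcal V_{\mathrm{ext}}}\Vert\psi\Vert_{H^1}$ for all $\psi=\sum_\nu c_\nu\phi_\nu\in\mathcal H_K$. I would estimate $\Vert T\psi\Vert_F^2=\langle T\psi,(F-\Lambda_0)T\psi\rangle$ directly in the determinant basis. The decisive structural input, from Lemma~\ref{Lemma:FockOperator}, is that whenever $X_\mu\phi_\nu=\pm\phi_\rho\neq 0$ the energies add, $\varepsilon_\rho=\varepsilon_\mu+\varepsilon_\nu$ (recall also $\varepsilon_\nu\ge 0$, since $\Lambda_0$ is the smallest eigenvalue of $F$). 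Writing $T\psi=\sum_\rho d_\rho\phi_\rho$ with $d_\rho=\sum_{(\mu,\nu):X_\mu\phi_\nu=\pm\phi_\rho}(\pm)\,t_\mu c_\nu$, a target $\rho$ together with $\mu$ determines $\nu$ uniquely (excitation operators are partial injections, with $X_\mu^\dagger$ undoing the substitution), so at most $|\mathcal J_{\mathrm{ext}}|$ pairs contribute to each $d_\rho$. Cauchy--Schwarz gives $|d_\rho|^2\leq|\mathcal J_{\mathrm{ext}}|\sum_{(\mu,\nu)\to\rho}|t_\mu|^2|c_\nu|^2$, and summing against $\varepsilon_\rho=\varepsilon_\mu+\varepsilon_\nu$ (each pair mapping to a single $\rho$) yields
\[
\Vert T\psi\Vert_F^2\leq|\mathcal J_{\mathrm{ext}}|\sum_{\mu,\nu}(\varepsilon_\mu+\varepsilon_\nu)|t_\mu|^2|c_\nu|^2=|\mathcal J_{\mathrm{ext}}|\big(\Vert t\Vert_{\mathcal V_{\mathrm{ext}}}^2\Vert\psi\Vert_{L^2}^2+\Vert t\Vert_{\ell^2}^2\Vert\psi\Vert_F^2\big),
\]
where $\Vert t\Vert_{\ell^2}$ is the unweighted amplitude norm. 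The CAS-ext gap gives $\Vert t\Vert_{\ell^2}^2\leq\varepsilon_0^{-1}\Vert t\Vert_{\mathcal V_{\mathrm{ext}}}^2$, while $\Vert\psi\Vert_{L^2}\leq\Vert\psi\Vert_{H^1}$ and $\Vert\psi\Vert_F\lesssim\Vert\psi\Vert_{H^1}$ by the boundedness of $F$ (Eq.~\eqref{eq:Fbdd}); combining these with $\Vert T\psi\Vert_{H^1}\sim\Vert T\psi\Vert_F$ closes the estimate.

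I expect the combinatorial bookkeeping of the last step to be the crux: trying to control $T$ through its $L^2$ operator norm only produces a lossy $\ell^1$ bound on the amplitudes, so the argument must instead exploit the additivity $\varepsilon_\rho=\varepsilon_\mu+\varepsilon_\nu$ together with the orthogonality of the target determinants to recover the correct $\ell^2$-weighted dependence. The crude counting constant $|\mathcal J_{\mathrm{ext}}|$ is harmless here because $K$ is finite, but it signals that uniformity in $K$ would demand a sharper multiplicity estimate, which I would flag as the delicate point in any $K\to\infty$ extension.
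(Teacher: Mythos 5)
Your proof is correct, and your easy direction coincides with the paper's; for the hard direction $\Vert T\Vert_{\mathcal{B}(H^1)}\lesssim\Vert t\Vert_{\mathcal{V}_{\mathrm{ext}}}$ you take a genuinely different route. The paper restricts to $\psi\in\mathrm{span}\{\phi_0\}^{\perp}$, writes $\psi=S\phi_0$, recasts $\Vert T\psi\Vert_{H^1}$ via Proposition~\ref{prop:Focknorm_ExtNorm} as a weighted $l^2$-norm, and then bounds the matrix $DA\tilde{D}^{-1}$ entrywise using the ratio estimates of Lemma~\ref{lemma:FockEstimate66}. You instead work directly in the determinant basis and replace that lemma by the exact additivity $\varepsilon_\rho=\varepsilon_\mu+\varepsilon_\nu$ whenever $X_\mu\phi_\nu=\pm\phi_\rho\neq 0$ --- which is in fact the identity underlying the paper's ratio lemma: dividing it by $\varepsilon_\mu\geq\varepsilon_0$ recovers part i), and bounding $\varepsilon_\mu\leq N(\lambda_k-\lambda_1)$ for CAS indices recovers part ii) --- combined with the observation that $(\rho,\mu)$ determines $\nu$, so each coefficient $d_\rho$ has at most $|\mathcal{J}_{\mathrm{ext}}|$ terms. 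The supporting facts you invoke all check out: $T\psi\in\mathcal{H}_{\mathrm{ext}}$, so Lemma~\ref{Lemma:H1} applies to $T\psi$; $\varepsilon_\nu\geq 0$ for all $\nu$ is precisely the paper's own assertion that $\Lambda_0$ is the smallest eigenvalue of $F$; and $\varepsilon_\mu\geq\varepsilon_0$ on $\mathcal{J}_{\mathrm{ext}}$ is stated in the paper. The trade-off is the one you flag yourself: your argument is more elementary and self-contained (no ratio lemma, no matrix-norm manipulation, no reduction to $\mathrm{span}\{\phi_0\}^{\perp}$), but the Cauchy--Schwarz counting injects the factor $|\mathcal{J}_{\mathrm{ext}}|$, so your equivalence constant is dimension-dependent. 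Since any two norms on a finite-dimensional space are equivalent, the real content of the theorem is how the constants depend on the data, and the paper's route (inherited from the single-reference analysis of Schneider) is engineered to produce constants controlled by $\varepsilon_0$, $N$ and spectral bounds alone, which is what any uniformity in $K$ would require; to be fair, the paper's own final step, passing from entrywise bounds on $DA\tilde{D}^{-1}$ to its spectral norm, is itself cavalier about exactly this multiplicity issue.
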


\noindent
To show this we first prove the following lemma.
\begin{lemma}
\label{lemma:FockEstimate66}
Let $\nu\in\mathcal{J}_{\mathrm{ext}}$ and $\alpha,\mu\in\mathcal{J}$ with $|\alpha|,~|\mu|\leq |\nu|$ and $\langle \phi_{\nu}, X_{\alpha}\phi_{\mu}  \rangle\neq 0$. Then there exists a constant $C\geq 0$ such that
\begin{align*}
i)\quad\frac{\varepsilon_{\nu}}{\varepsilon_{\mu}}\leq C\varepsilon_{\alpha}\,,\text{ if }\alpha,~\mu\in\mathcal{J}_{\mathrm{ext}}\qquad\quad
ii)&\quad \varepsilon_{\nu}\leq C\varepsilon_{\alpha}\,,\text{ if }\alpha\in \mathcal{J}_{\mathrm{ext}}\text{ and }\mu\in\mathcal{J}_{\mathrm{CAS}}~.
\end{align*}
\end{lemma}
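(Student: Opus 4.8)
The plan is to reduce both inequalities to a single additive identity for the Fock energies, namely that $\varepsilon_\nu = \varepsilon_\mu + \varepsilon_\alpha$ holds whenever $\langle\phi_\nu, X_\alpha\phi_\mu\rangle\neq 0$, and then to close the two estimates using the CAS--ext gap of Assumption (A) together with the finiteness of the CAS. To obtain the identity I would first note that an excitation operator sends each basis determinant either to zero or to a single basis determinant up to sign, so the hypothesis $\langle\phi_\nu, X_\alpha\phi_\mu\rangle\neq 0$ forces $X_\alpha\phi_\mu = \pm\phi_\nu$. Applying $F$ and invoking Lemma~\ref{Lemma:FockOperator}, i.e.\ $[F,X_\alpha]=\varepsilon_\alpha X_\alpha$ together with $F\phi_\sigma=(\Lambda_0+\varepsilon_\sigma)\phi_\sigma$, gives
\begin{equation*}
F X_\alpha\phi_\mu = \big(X_\alpha F + [F,X_\alpha]\big)\phi_\mu = (\Lambda_0+\varepsilon_\mu+\varepsilon_\alpha)\,X_\alpha\phi_\mu,
\end{equation*}
so that $X_\alpha\phi_\mu$ is an $F$-eigenvector with eigenvalue $\Lambda_0+\varepsilon_\mu+\varepsilon_\alpha$; comparing this with $F\phi_\nu=(\Lambda_0+\varepsilon_\nu)\phi_\nu$ and $X_\alpha\phi_\mu=\pm\phi_\nu$ yields $\varepsilon_\nu=\varepsilon_\mu+\varepsilon_\alpha$. (Tracking which spin-orbitals are removed and added even shows $|\nu|=|\mu|+|\alpha|$, so the rank hypothesis $|\alpha|,|\mu|\le|\nu|$ is automatically satisfied.)

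With the identity in hand I would record two elementary bounds. For any external index $\sigma\in\mathcal{J}_{\mathrm{ext}}$ the CAS--ext gap gives $\varepsilon_\sigma\ge\varepsilon_0>0$, whereas for any CAS index $\sigma\in\mathcal{J}_{\mathrm{CAS}}$ every excitation lifts a particle from an occupied orbital ($\le N$) to a virtual one ($>N$), so $0\le\varepsilon_\sigma\le C_{\mathrm{CAS}}$ with $C_{\mathrm{CAS}}:=\max_{\tau\in\mathcal{J}_{\mathrm{CAS}}}\varepsilon_\tau<\infty$ finite since $\mathcal{J}_{\mathrm{CAS}}$ is a finite set. For part (i), where $\alpha,\mu\in\mathcal{J}_{\mathrm{ext}}$, I divide the identity by $\varepsilon_\mu$ and use $\varepsilon_\mu\ge\varepsilon_0$ and $1\le\varepsilon_\alpha/\varepsilon_0$ to get $\varepsilon_\nu/\varepsilon_\mu = 1+\varepsilon_\alpha/\varepsilon_\mu \le 2\varepsilon_\alpha/\varepsilon_0$. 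For part (ii), where $\mu\in\mathcal{J}_{\mathrm{CAS}}$, I instead bound $\varepsilon_\nu=\varepsilon_\mu+\varepsilon_\alpha\le C_{\mathrm{CAS}}+\varepsilon_\alpha\le(1+C_{\mathrm{CAS}}/\varepsilon_0)\,\varepsilon_\alpha$, absorbing the constant term into $\varepsilon_\alpha$ via $\varepsilon_\alpha\ge\varepsilon_0$. Choosing $C$ to be the larger of $2/\varepsilon_0$ and $1+C_{\mathrm{CAS}}/\varepsilon_0$ then covers both cases.

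I do not anticipate a genuine obstacle: the entire content sits in the additive identity, after which both bounds are one-line manipulations. The only point demanding a little care is the finiteness of $C_{\mathrm{CAS}}$ used in part (ii), which relies on $K<\infty$ so that $\mathcal{J}_{\mathrm{CAS}}$ is finite. It is also worth stressing the structural reason the two statements are phrased differently: no positive lower bound on $\varepsilon_\sigma$ is available for CAS indices (there need be no HOMO--LUMO gap inside the CAS), which is exactly why part (i) must control the ratio $\varepsilon_\nu/\varepsilon_\mu$ rather than $\varepsilon_\nu$ itself.
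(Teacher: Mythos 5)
Your proposal is correct, and it reaches the lemma by a genuinely different, more direct route than the paper. Both arguments hinge on the same combinatorial fact---an excitation operator maps a determinant to $\pm$ another determinant or to zero, so $\langle\phi_\nu,X_\alpha\phi_\mu\rangle\neq0$ forces $X_\alpha\phi_\mu=\pm\phi_\nu$ and hence the index sets of $\nu$ are the disjoint union of those of $\alpha$ and $\mu$---but they exploit it differently. The paper, following Lemma 4.14 of \cite{schneider2009analysis}, only uses the consequence for the \emph{maximal} virtual level: it sets $\delta=(\lambda_{k+1}+\lambda_k)/2$ and $\overline{\lambda}_\nu=\max_j\lambda_{A_j}-\delta$, proves the two-sided comparison $C^{-1}\varepsilon_\nu\leq\overline{\lambda}_\nu\leq\varepsilon_\nu$ for $\nu\in\mathcal{J}_{\mathrm{ext}}$, and then runs a case analysis on whether $\overline{\lambda}_\alpha\geq\overline{\lambda}_\mu$ or not, using that $\overline{\lambda}_\nu$ equals the larger of the two. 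You instead extract the exact additive identity $\varepsilon_\nu=\varepsilon_\alpha+\varepsilon_\mu$; your Fock-commutator derivation of it is valid and uses only ingredients stated in the paper ($[F,X_\alpha]=\varepsilon_\alpha X_\alpha$ from Lemma~\ref{Lemma:FockOperator} and $F\phi_\sigma=(\Lambda_0+\varepsilon_\sigma)\phi_\sigma$), and it also follows by simply summing $\lambda_{A_j}-\lambda_{I_j}$ over the disjoint index sets. After that, (i) and (ii) are one-line estimates from $\varepsilon_\sigma\geq\varepsilon_0$ for $\sigma\in\mathcal{J}_{\mathrm{ext}}$ and $0\leq\varepsilon_\sigma\leq N(\lambda_k-\lambda_1)$ for $\sigma\in\mathcal{J}_{\mathrm{CAS}}$ (a bound the paper itself records in a later remark). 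What each approach buys: yours is shorter, yields explicit constants $2/\varepsilon_0$ and $1+C_{\mathrm{CAS}}/\varepsilon_0$, avoids the case split, and makes structurally transparent why (ii) can bound $\varepsilon_\nu$ itself while (i) must bound the ratio; the paper's detour through $\overline{\lambda}$ keeps the proof formally parallel to the single-reference CC analysis it imports, at the price of cruder constants involving $N$ and $|\lambda_1-\delta|$. Note finally that both proofs rely on the same implicit ordering of the orbital energies (occupied below virtual, CAS below external), which makes every term $\lambda_{A_j}-\lambda_{I_j}$ nonnegative: you use it for the bounds on $\varepsilon_\sigma$, the paper for $\overline{\lambda}_\nu\leq\varepsilon_\nu$ and $\varepsilon_\nu\geq\varepsilon_0$, so this is not a gap specific to your argument.
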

\begin{proof}
Set $\delta= (\lambda_{k+1}+\lambda_k)/2$ and define $\overline{\lambda}_{\nu}=\max\{\lambda_{A_j}: \, j=1,...,|\nu|\}-\delta$, which is well-defined since $K$ is finite.
We first demonstrate, following Lemma 4.14 in \cite{schneider2009analysis}, for all $\nu\in\mathcal{J}_{\mathrm{ext}}$ there exists a $C>0$ such that
\begin{equation}
C^{-1}\varepsilon_{\nu}\leq \overline{\lambda}_{\nu}\leq \varepsilon_{\nu}~.
\label{eq:lambdaIneq}
\end{equation}
Let $\nu\in\mathcal{J}_{\mathrm{ext}}$. It is immediate that $\varepsilon_0^{-1}\geq \varepsilon_{\nu}^{-1}$. 
From the definition of $\overline{\lambda}_{\nu}$, we conclude 
\begin{equation*}
\varepsilon_{\nu} = \sum_{j=1}^{|\nu|} (\lambda_{A_j} - \lambda_{I_j}) \leq N(\overline{\lambda}_{\nu}-(\lambda_1-\delta))~.
\end{equation*}
Since $\overline{\lambda}_{\nu}\geq \lambda_{k+1}- \delta$ it follows $\overline{\lambda}_{\nu} \geq\varepsilon_0/2$, which is equivalent to $(2\overline{\lambda}_{\nu})^{-1}\leq \varepsilon_0^{-1}$. 
This implies $|\lambda_1-\delta|\leq 2|\lambda_1-\delta|\overline{\lambda}_{\nu}/\varepsilon_0$. 
Thus, 
\begin{equation*}
N^{-1}\varepsilon_{\nu} \leq  \overline{\lambda}_{\nu} + |\lambda_1-\delta|	 \leq (1 +  2|\lambda_1-\delta|/\varepsilon_0 )\overline{\lambda}_{\nu}~,
\end{equation*}
which proves the first inequality of Eq.~\eqref{eq:lambdaIneq}. 
For the second inequality we define $\lambda_{A_{j_*}} =~\max\{\lambda_{A_j}: \, j=1,...,|\nu|\}$ and note that	$\varepsilon_\nu \geq \lambda_{A_{j_*}} - \lambda_{I_{j_*}} \geq \lambda_{A_{j_*}}  -\delta = \overline{\lambda}_\nu$.
We now prove the lemma considering three cases:
\begin{itemize}
\item[i)] Let $\alpha,~\mu\in\mathcal{J}_{\mathrm{ext}}$ and $\overline{\lambda}_{\alpha}\geq \overline{\lambda}_{\mu}$. Then $\overline{\lambda}_{\alpha}= \overline{\lambda}_{\nu}$ and we estimate
\begin{equation*}
\frac{\varepsilon_{\nu}}{\varepsilon_{\mu}}\leq \frac{C\overline{\lambda}_{\nu}}{\varepsilon_{0}}=\frac{C}{\varepsilon_{0}}\overline{\lambda}_{\alpha}\leq \frac{C}{\varepsilon_{0}}\varepsilon_{\alpha}~.
\end{equation*}
		\item[ii)]  Let $\alpha,~\mu\in\mathcal{J}_{\mathrm{ext}}$ and $\overline{\lambda}_{\alpha}\leq \overline{\lambda}_{\mu}$. Then $\overline{\lambda}_{\mu}= \overline{\lambda}_{\nu}$ and using $(2\overline{\lambda}_{\alpha})^{-1}\leq \varepsilon_0^{-1}$ we obtain
		\begin{equation*}
		\frac{\varepsilon_{\nu}}{\varepsilon_{\mu}}\leq \frac{C\overline{\lambda}_{\nu}}{\overline{\lambda}_{\mu}}=\frac{2\overline{\lambda}_{\alpha}}{2\overline{\lambda}_{\alpha}}C\leq \frac{2C}{\varepsilon_0}\varepsilon_{\alpha}~.
		\end{equation*}
		\item[iii)] Let $\alpha\in \mathcal{J}_{\mathrm{ext}}$ and $\mu\in\mathcal{J}_{\mathrm{CAS}}$. Then $\overline{\lambda}_{\alpha}= \overline{\lambda}_{\nu}$ and $\varepsilon_{\nu}\leq C \overline{\lambda}_{\nu} = C\overline{\lambda}_{\alpha}\leq C\varepsilon_{\alpha}$.
	\end{itemize}
\end{proof}
\begin{proof}[Proof of Theorem \ref{th:OperatorNormEquiv}]
Proposition~\ref{prop:Focknorm_ExtNorm} implies the inequality $\Vert  t \Vert_{\mathcal{V}_{\mathrm{ext}}}\lesssim \Vert T\phi_0\Vert_{H^1}\leq \Vert T\Vert_{\mathcal{B}(H^1)} \Vert\phi_0\Vert_{H^1}$.
	Consequently, it remains to show that $\Vert T\psi\Vert_{H^1}\leq C\Vert  t \Vert_{\mathcal{V}_{\mathrm{ext}}} \Vert \psi\Vert_{H^1}$ for $\psi\in \text{span}\{\phi_0\}^{\perp}$ (in the $L^2$-sense).
	Let $\psi=\sum_{\mu\in\mathcal{J}}s_{\mu}\phi_{\mu}=S\phi_0\in\mathcal{H}_K$, $T = \sum_{\alpha\in\mathcal{J}_{\mathrm{ext}}}t_{\alpha}X_{\alpha}$ and $s=(s_{\mu})_{\mu\in\mathcal{J}}$, where we assume without loss of generality that $(s_{\mu})_{\mu\in\mathcal{J}}=((s_{\mu})_{\mu\in\mathcal{J}_{\mathrm{ext}}},(s_{\mu})_{\mu\in\mathcal{J}_{\mathrm{CAS}}})$.
	Note that the product $TS$ is an excitation operator with cluster amplitudes in $\mathcal{V}_{\mathrm{ext}}$.
	Hence, Proposition~\ref{prop:Focknorm_ExtNorm} yields
	\begin{equation}
	\begin{aligned}
	\Vert T\psi\Vert^2_{H^1}
	=
	\Vert TS\phi_0\Vert^2_{H^1}
	&\sim 
	\Vert (\langle \phi_{\nu},TS\phi_0  \rangle)_{\nu\in\mathcal{J}_{\mathrm{ext}}}\Vert^2_{\mathcal{V}_{\mathrm{ext}}}
	=
	\Vert (\langle \phi_{\nu},T\psi  \rangle)_{\nu\in\mathcal{J}_{\mathrm{ext}}}\Vert^2_{\mathcal{V}_{\mathrm{ext}}}\\
	&=
	\sum_{\nu\in\mathcal{J}_{\mathrm{ext}}}\Big(
	\varepsilon_{\nu}^{1/2}|\sum_{\alpha\in\mathcal{J}_{\mathrm{ext}}}\sum_{\mu\in\mathcal{J}}t_{\alpha}s_{\mu}\langle \phi_{\nu}, X_{\alpha} \phi_{\mu}  \rangle|
	\Big)^2~.
	\label{eq:estimationSchneider4.15}
	\end{aligned}
	\end{equation}

	We now define $A=\left(
	\langle \phi_{\nu} , T \phi_{\mu}  \rangle
	\right)_{\nu\in\mathcal{J}_{\mathrm{ext}},\mu\in\mathcal{J}}$, $D= \mathrm{diag}(\varepsilon_{\nu}^{1/2})_{\nu\in\mathcal{J}_{\mathrm{ext}}}$ and $\tilde{D}= \mathrm{diag}(D,I)$. 
	The operator inequality $\Vert TS\phi_0\Vert^2_{H^1}\leq\Vert S \Vert^2_{\mathcal{B}(H^1)} \Vert T\phi_0\Vert^2_{H^1} $ yields with Eq.~\eqref{eq:estimationSchneider4.15} that $\Vert t\Vert^2_{\mathcal{V}}\sim \Vert DA\tilde{D}^{-1}\tilde{D}s\Vert^2_{2}$. 
	We estimate $\Vert DA\tilde{D}^{-1}\Vert_2$ by means of Lemma \ref{lemma:FockEstimate66}:
	\begin{itemize}
		\item[i)] Let $\mu\in \mathcal{J}_{\mathrm{ext}}$. Then
		\begin{equation*}
		\tilde{a}_{\nu,\mu} 
		= 
		\left(\frac{\varepsilon_{\nu}}{\varepsilon_{\mu}}\right)^{1/2}\sum_{\alpha\in\mathcal{J}_{\mathrm{ext}}}
		t_{\alpha}\langle \phi_{\nu}, X_{\alpha} \phi_{\mu}  \rangle 
		\lesssim 
		\sum_{\alpha\in\mathcal{J}_{\mathrm{ext}}}
		t_{\alpha}\varepsilon_{\alpha}^{1/2}\langle \phi_{\nu}, X_{\alpha} \phi_{\mu} \rangle ~.
		\end{equation*}
		\item[ii)] Let $\mu\in \mathcal{J}_{\mathrm{CAS}}$. Then
		\begin{equation*}
		\tilde{a}_{\nu,\mu} 
		= 
		\varepsilon_{\nu}^{1/2}\sum_{\alpha\in\mathcal{J}_{\mathrm{ext}}}
		t_{\alpha}\langle \phi_{\nu}, X_{\alpha} \phi_{\mu}  \rangle 
		\lesssim 
		\sum_{\alpha\in\mathcal{J}_{\mathrm{ext}}}
		t_{\alpha}\varepsilon_{\alpha}^{1/2}\langle \phi_{\nu}, X_{\alpha} \phi_{\mu} \rangle ~.
		\end{equation*}
	\end{itemize}
	Hence, $
	\Vert DA\tilde{D}^{-1} \Vert_{2}^2 \leq C  \sum_{\alpha\in \mathcal{J}_{\mathrm{ext}}}t_{\alpha}^2\varepsilon_{\alpha} 
	=
	C\Vert t\Vert_{\mathcal{V}_{\mathrm{ext}}}^2$ and $\Vert T \Vert_{\mathcal{B}\left(H^1\right)}\leq C\Vert t\Vert^2_{\mathcal{V}_{\mathrm{ext}}}$. The norm equivalence follows since $\Vert t\Vert_{\mathcal{V}_{\mathrm{ext}}} \sim \Vert T\psi\Vert_{H^1} \sim \Vert T\Vert_{\mathcal{B}\left( H^1\right)}$.
\end{proof}

We show the applicability of Lemma~\ref{Th:Zarantonello} by establishing Lipschitz continuity of the TCC function.
\begin{theorem}
The function $f:\mathcal{V}_{\mathrm{ext}}\rightarrow \mathcal{V}_{\mathrm{ext}}'$, given in Definition~\ref{def:Tcc_function}, is differentiable at $t\in \mathcal{V}_{\mathrm{ext}}$. Furthermore, the derivative is Lipschitz continuous as well as all higher derivatives. In particular, for any ball $B_r(t_*)\subseteq \mathcal{V}_{\mathrm{ext}}$ there exists a Lipschitz constant $L$ depending on $r$ and $t_*$ such that
\begin{equation}
\Vert f(t_1)-f(t_2)\Vert_{\mathcal{V}_{\mathrm{ext}}'}\leq L\Vert t_1-t_2\Vert_{\mathcal{V}_{\mathrm{ext}}}\quad
\end{equation}	%
for $t_1,~t_2\in B_{r}(t_*)$.
\label{Th:LipschitzCont}
\end{theorem}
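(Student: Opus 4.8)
The plan is to exploit the fact that, with $t^{\mathrm{CAS}}$ held fixed, $f$ is a \emph{polynomial of degree four} in the external amplitudes $t$, and then to upgrade the purely algebraic smoothness of a polynomial map to quantitative Lipschitz bounds in the weighted topologies by feeding in the norm equivalences of Theorem~\ref{th:OperatorNormEquiv} and Proposition~\ref{prop:Focknorm_ExtNorm}. First I would record the polynomial structure. Since $T$ and $T^{\mathrm{CAS}}$ are commuting cluster operators, $e^{T}e^{T^{\mathrm{CAS}}}=e^{\hat T}$ with $\hat T = T + T^{\mathrm{CAS}}$, and $f(t;t^{\mathrm{CAS}})=P_{\mathcal V_{\mathrm{ext}}}f_{\mathrm{CC}}(t\oplus t^{\mathrm{CAS}})$ as already noted. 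The Baker--Campbell--Hausdorff expansion then gives
\begin{equation*}
e^{-\hat T}He^{\hat T}=\sum_{n=0}^{4}\frac{1}{n!}\underbrace{[\,\cdots[H,\hat T],\dots,\hat T\,]}_{n\text{-fold}}~,
\end{equation*}
the series terminating at the fourth nested commutator because $H$ contains at most two-body interactions while the $X_\mu$ only excite (the one-body part being governed by $e^{-T}Fe^{T}=F+[F,T]$ of Lemma~\ref{Lemma:FockOperator}, cf.~\cite{helgaker2014molecular}). Writing $\hat T = T(t)+T^{\mathrm{CAS}}$ and collecting powers of $t$, each component $(f(t))_\mu$ becomes a polynomial of degree at most four in $t$, with coefficients assembled from $H$, $\phi_0$ and the fixed $T^{\mathrm{CAS}}$.

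Next I would invoke the general fact that a polynomial map between Banach spaces is Fr\'echet-$C^\infty$: its $k$-th derivative $D^{k}f(t)$ is the bounded symmetric $k$-linear map obtained by polarizing the degree-$k$ part, is itself a polynomial of degree $4-k$ in $t$, and vanishes identically for $k\ge5$. Thus differentiability and the existence of all higher derivatives are automatic, \emph{provided} the underlying multilinear maps are bounded as maps into $\mathcal V_{\mathrm{ext}}'$. This boundedness is the only real analytic content, and it is where the topology of $\mathcal V_{\mathrm{ext}}$ enters.

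To control the dual norm I would dualize through the Gelfand triple of Remark~\ref{Rmk:Gelfand}: for a residual $r=(\langle\phi_\mu,R\rangle)_{\mu\in\mathcal J_{\mathrm{ext}}}$ one has $\langle v,r\rangle=\langle V\phi_0,R\rangle$ for the cluster operator $V$ with amplitudes $v$, whence
\begin{equation*}
\Vert r\Vert_{\mathcal V_{\mathrm{ext}}'}=\sup_{\Vert v\Vert_{\mathcal V_{\mathrm{ext}}}\le1}\langle V\phi_0,R\rangle\lesssim\Vert R\Vert_{H^{-1}}~,
\end{equation*}
using $\Vert v\Vert_{\mathcal V_{\mathrm{ext}}}\sim\Vert V\phi_0\Vert_{H^1}$ from Proposition~\ref{prop:Focknorm_ExtNorm}. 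It then suffices to estimate $R=e^{-\hat T}He^{\hat T}\phi_0$ in $H^{-1}$ term by term in the commutator expansion. Splitting $H=F+W$, the Fock part is bounded $H^{1}\to H^{-1}$ by Eqs.~\eqref{eq:Fgard}--\eqref{eq:Fbdd} and the fluctuation potential $W$ likewise as in Section~\ref{sec:SE}; each cluster operator extends boundedly to $H^{-1}$ by duality of its de-excitation adjoint, so every nested commutator is a bounded map $H^{1}\to H^{-1}$. Crucially, every factor of $T$ is charged exactly one power of $\Vert t\Vert_{\mathcal V_{\mathrm{ext}}}$ via the operator-norm equivalence $\Vert T\Vert_{\mathcal B(H^{1})}\sim\Vert t\Vert_{\mathcal V_{\mathrm{ext}}}$ of Theorem~\ref{th:OperatorNormEquiv}, while the fixed $T^{\mathrm{CAS}}$ contributes only constants. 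Hence $\Vert D^{k}f(t)\Vert$ is bounded by a polynomial of degree $4-k$ in $\Vert t\Vert_{\mathcal V_{\mathrm{ext}}}$.

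Finally, the stated local Lipschitz estimate follows from the fundamental theorem of calculus, $f(t_1)-f(t_2)=\int_0^1 Df\bigl(t_2+s(t_1-t_2)\bigr)[\,t_1-t_2\,]\,ds$, combined with the bound just obtained: on $B_r(t_*)$ one has $\Vert t\Vert_{\mathcal V_{\mathrm{ext}}}\le\Vert t_*\Vert_{\mathcal V_{\mathrm{ext}}}+r$, so $\Vert Df(t)\Vert$ is dominated by a degree-three polynomial in $\Vert t_*\Vert_{\mathcal V_{\mathrm{ext}}}+r$, which I would take as $L=L(r,t_*)$. The same argument applied to $D^{k+1}f$ (bounded on $B_r(t_*)$ for $k+1\le4$, constant for $k=4$, zero for $k\ge5$) yields Lipschitz continuity of every derivative $D^{k}f$. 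The hard part will be the boundedness step of the third paragraph: one must pass rigorously from the weighted dual norm to $\Vert\cdot\Vert_{H^{-1}}$, justify the $H^{-1}$-extensions of the cluster operators, and confirm that the two-body potential $W$ is what generates the commutator terms up to order four. The additional bookkeeping of CAS versus external indices, absent in the single-reference setting, is precisely what makes this estimate more delicate than its analogue in \cite{schneider2009analysis}.
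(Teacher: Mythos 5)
Your proposal is correct, and its analytic core coincides with the paper's: both proofs ultimately rest on $H:H^1\to H^{-1}$, the operator-norm equivalence $\Vert T\Vert_{\mathcal{B}(H^1)}\sim\Vert t\Vert_{\mathcal{V}_{\mathrm{ext}}}$ of Theorem~\ref{th:OperatorNormEquiv}, and the pairing estimate $\Vert r\Vert_{\mathcal{V}_{\mathrm{ext}}'}\lesssim\Vert R\Vert_{H^{-1}}$ that follows from Proposition~\ref{prop:Focknorm_ExtNorm}. Where you genuinely differ is the superstructure. The paper differentiates $f$ directly, writing $Df(t)s$ as the single commutator form $\langle\phi_\mu, e^{-T}[e^{-T^{\mathrm{CAS}}}He^{T^{\mathrm{CAS}}},S]e^{T}\phi_0\rangle$, bounds that one bilinear form, deduces local Lipschitz continuity of $f$ from continuity of $t\mapsto f'(t)$ (via continuity of the Coulomb and fluctuation potentials), and dismisses higher derivatives with ``treated in the same way.'' You instead organize everything around the degree-four polynomial structure of $f$ (BCH truncation for a two-body Hamiltonian): smoothness, the vanishing of $D^k f$ for $k\geq 5$, and Lipschitz continuity of \emph{all} derivatives then come for free once finitely many multilinear coefficient maps are bounded, and the fundamental-theorem-of-calculus step yields an explicit Lipschitz constant, polynomial of degree three in $\Vert t_*\Vert_{\mathcal{V}_{\mathrm{ext}}}+r$. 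Your route is more self-contained and quantitative, and it handles the ``all higher derivatives'' clause of the statement cleanly where the paper leaves it implicit; the paper's route is shorter because it never needs the truncation of the BCH series. One caveat you share with the paper: the step requiring excitation exponentials to act boundedly on $H^{-1}$ (equivalently, de-excitation operators bounded on $H^1$) does not follow by abstract duality from Theorem~\ref{th:OperatorNormEquiv} alone, which bounds only excitations on $H^1$; the paper makes the same elision when it invokes $T^\dagger\in\mathcal{B}(H^{-1})$. You at least flag this explicitly as the point needing rigorous justification---it is supplied by adapting the single-reference results of Rohwedder and Schneider---so your proposal is no less rigorous than the paper's own argument on this point.
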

\begin{proof}
For the derivative of $f$ we find
\begin{equation*}
Df(t):\mathcal{V}_{ext} \to \mathcal{V}_{ext}'
~;s \mapsto 
\langle  
\phi_\mu ,e^{-T}[e^{-T^{\rm CAS}} He^{T^{\rm CAS}} ,S] e^T\phi_0
\rangle~.
\end{equation*}
Note that Theorem~\ref{th:OperatorNormEquiv} yields $T^\dag \in \mathcal{B}(H^{-1})$ for any cluster amplitude vector $t \in \mathcal{V}_\mathrm{ext}$.
Then, using $H : H^1 \to H^{-1}$ we obtain $|\langle Df(t)s,u\rangle| \leq C \Vert s \Vert_{\mathcal{V}_\mathrm{ext}} \Vert u \Vert_{\mathcal{V}_\mathrm{ext}}$ for given $s,u\in \mathcal{V}_\mathrm{ext}$.
This shows the boundedness of $f'(t): \mathcal{V}_\mathrm{ext} \to \mathcal{V}_\mathrm{ext}'$, hence, $f:\mathcal{V}_{\mathrm{ext}}\rightarrow \mathcal{V}_{\mathrm{ext}}'$ is differentiable at $t\in \mathcal{V}_{\mathrm{ext}}$.
The continuity of the Coulomb potential \cite{yserentant} and the fluctuation potential $W=H-F$ \cite{lieb1977hartree} further implies the continuity of $t\mapsto f'(t)$.	
Hence $f$ is local Lipschitz continuous on $B_r(t_*)$. 
Higher order derivatives are treated in the same way. 
\end{proof}

To prove that $f$ is locally strongly monotone,  we use the decomposition 
\begin{align} \label{eq:H-split}
H= F + PWP + (W - PWP)~,
\end{align}
where $W$ is the fluctuation operator and $P$ is the orthogonal projection onto the CAS. The decomposition is motivated from a perturbation theory point of view as follows: Suppose $
\lambda=\Vert W - PWP\Vert_{\mathcal{B}(H^1,H^{-1})} = 0$. Then it is straightforward to see that $\mathcal{H}_{\text{CAS}}$ is an invariant subspace for $H$, and hence the CAS FCI problem is exact. Therefore, $t_*=0$ is a solution in this case, as can easily be checked. Also, the CAS-ext gap at least intuitively indicates that the TCC function $f$ is locally strongly monotone at $t_*=0$. (This can also be checked.) Now, suppose $\lambda = \|W - PWP\|_{\mathcal{B}(H^1,H^{-1})}$ is finite and sufficiently small. It is reasonable to expect that $t_*(\lambda)$ is correspondingly small, i.e., a small perturbation of the case $W - PWP = 0$, staying within the domain of strong monotonicity. In conclusion, we expect that under some smallness assumption on $W - PWP$ it is achievable to demonstrate local strong monotonicity of the TCC function $f$. We also note that by enlarging the CAS, $W - PWP$ becomes smaller, so that tuning the CAS can be an important tool to achieve proper smallness in practice.

For a fixed $T^\mathrm{CAS}$, we define the map 
\begin{align*}
O:\mathcal{V}_{\rm ext} \to H^{-1}~;~
t \mapsto \big(e^{-T} (W_{\rm CAS} - PW_{\rm CAS}P)  e^T - (W_{\rm CAS}-PW_{\rm CAS}P)  \big) \phi_0~,
\end{align*}
where $W_{\rm CAS} = \exp(-T^{\rm CAS})W\exp(T^{\rm CAS})$.
Similarly to Theorem~\ref{Th:LipschitzCont}, we find that $O(\cdot)$ is differentiable with  
\begin{equation*}
DO(s): \mathcal{V}_{ext} \to H^{-1}; \quad t\mapsto [e^{-S} (W_\mathrm{CAS}-PW_\mathrm{CAS} P)  e^S
,T] \phi_0~,
\end{equation*}
which implies locally Lipschitz continuity.
For technical reasons, we will make use of a Lipschitz condition with respect to the $l^2$-norm, which is no restriction since all norms are equivalent in finite dimensions.

\textbf{Assumption} (B). \textit{
There exists a ball $B_{\delta}(t_*) \subset \mathcal V_\mathrm{ext}$ such that 
for $t_1,t_2\in B_{\delta}(t_*)$ we have
\[
\Vert O(t_1) -O(t_2)  \Vert_{L^2} \leq L_* \Vert t_1-t_2\Vert_{2} ~,
\]
where the Lipschitz constant $L_*>0$ fulfills
\begin{equation}
\varepsilon_0 - \omega_0   - \Omega_\mathrm{CAS} > L_*   ~,
\label{eq:LischitzConst}
\end{equation}
with $ \Omega_\mathrm{CAS} =  \sum_{ \sigma\in\mathcal{J}_{\rm CAS}}   |t_{\sigma}^\mathrm{CAS}\varepsilon_{\sigma}|$, $\omega_0 = \langle \phi_0, W_\mathrm{CAS} \phi_0 \rangle$ and $\varepsilon_0$ the previously defined CAS-ext gap.
} 

\begin{rmk}
We note that the assumption of Lipschitz continuity of $O(\cdot)$ in Assumption (B) is more than actually needed. 
The crucial requirement is 
\begin{equation*}
|\langle
(T_1-T_2)\phi_0 , O(t_1) - O(t_2)
\rangle| 
\leq C_* \Vert t_1-t_2 \Vert_{2}^2~,
\end{equation*}
for some relatively small $C_*$.
However, this constant $C_*$ can be bounded from above in terms of the Lipschitz constant $L_*>0$ of $O(\cdot)$ since $C_* \leq C L_*$, where by Proposition~\ref{prop:Focknorm_ExtNorm} a constant $C$ exists fulfilling $\Vert t\Vert_{\mathcal{V}_{\mathrm{ext}}}\leq C \Vert  T \phi_0\Vert_{H^1}$. 
Furthermore,  
\begin{align}
\Vert DO(s) \Vert_{\mathcal{B}(L^2)} \sim \delta_{W_\mathrm{CAS}} &:= \Vert W_\mathrm{CAS}-PW_\mathrm{CAS}P \Vert_{\mathcal{B}(L^2)} \nonumber \\
&\leq \sum_k \frac{1}{k!}\Vert[ W-PWP, T^\mathrm{CAS}]_{(k)} \Vert_{\mathcal{B}(L^2)}~, 
\label{eq:WrelTCAS}
\end{align}
such that $L_*\sim \delta_{W_\mathrm{CAS}}$ and $C_*$ fulfills Eq.~~\eqref{eq:LischitzConst} under the assumption that $W-PWP$ is sufficiently small related to $T^\mathrm{CAS}$ as displayed in the rhs. of Eq.~\eqref{eq:WrelTCAS}.  
The latter aligns with a perturbational viewpoint of the TCC method as outlined above.
Note that we \emph{do not} impose a norm restriction on $W$ itself but an ideal CAS, meaning that the multireference character is captured within the CAS, i.e., $PWP$. 
The norm restriction on $W-PWP$ then becomes a natural consequence of the optimal CAS choice. 
\end{rmk}

\begin{rmk}
Note that since $\phi_\mathrm{CAS} = e^{T^\mathrm{CAS}} \phi_0$ is an approximate solution on the CAS, $~\omega_0$ accounts for the non-trivial energy correction (vis-a-vis $\phi_0$) and thus is negative for quantum-molecular systems. 
Typically then, $\omega_0 < 0$ and the CAS-ext gap $\varepsilon_0$ together with $|\omega_0|$ have to be large enough such that $\varepsilon_0 + |\omega_0|   >  \Omega_\mathrm{CAS}$.
Furthermore, Assumption (B) allows $t_\sigma^\mathrm{CAS}$ to be relatively large for $\sigma \in \mathcal J_\mathrm{CAS}$ with $\varepsilon_\sigma$ small. 
A not too big $\Omega_\mathrm{CAS}$ can be guaranteed if $\{ \lambda_j\}_{j=1}^k$ is densely confined because $\vert \varepsilon_\sigma \vert \leq N(\lambda_k - \lambda_1)$.
\end{rmk}

We are now able to prove that $f$ is locally strongly monotone.
\begin{theorem}
Under Assumption (A) and (B), the TCC function $f$ is locally strongly monotone on $B_\delta(t_*)$ for some $\delta>0$.
\label{Th:StronglyMon}
\end{theorem}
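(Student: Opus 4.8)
The plan is to bound $\langle v, f(t_1)-f(t_2)\rangle$ from below by a positive multiple of $\Vert t_1-t_2\Vert_2^2$ on a small ball and then pass to $\Vert\cdot\Vert_{\mathcal V_{\mathrm{ext}}}$ by norm equivalence. Since $T^{\mathrm{CAS}}$ and every external $T$ are commuting cluster operators, the CAS exponentials pass through and the duality pairing becomes $\langle v, f(t)\rangle = \langle V\phi_0, e^{-T}\tilde H e^{T}\phi_0\rangle$ with $\tilde H = e^{-T^{\mathrm{CAS}}}He^{T^{\mathrm{CAS}}}$ and $V=\sum_\mu v_\mu X_\mu$. Setting $V=T_1-T_2$ and inserting the splitting \eqref{eq:H-split} conjugated by $e^{T^{\mathrm{CAS}}}$, namely $\tilde H = F_{\mathrm{CAS}} + PW_{\mathrm{CAS}}P + (W_{\mathrm{CAS}}-PW_{\mathrm{CAS}}P)$ with $F_{\mathrm{CAS}}=e^{-T^{\mathrm{CAS}}}Fe^{T^{\mathrm{CAS}}}$, I split the pairing into three contributions and treat each separately.

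First, the Fock contribution. By Lemma~\ref{Lemma:FockOperator} and the commutativity of cluster operators, $F_{\mathrm{CAS}}=F+[F,T^{\mathrm{CAS}}]$ with $[F,T^{\mathrm{CAS}}]$ a CAS cluster operator commuting with every external $T$; hence $e^{-T}F_{\mathrm{CAS}}e^{T}\phi_0 = \Lambda_0\phi_0+(F-\Lambda_0)T\phi_0+[F,T^{\mathrm{CAS}}]\phi_0$, and the $T$-independent pieces cancel in the difference, leaving $(F-\Lambda_0)(T_1-T_2)\phi_0$. Pairing with $V\phi_0$ and invoking Proposition~\ref{prop:Focknorm_ExtNorm} gives exactly $\Vert t_1-t_2\Vert_{\mathcal V_{\mathrm{ext}}}^2 \geq \varepsilon_0\Vert t_1-t_2\Vert_2^2$, the coercive leading term.

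Next, the $PW_{\mathrm{CAS}}P$ contribution. Because an external excitation creates a determinant with an occupied external orbital and no further excitation can remove it, $T^n\phi_0\in\mathcal H_{\mathrm{ext}}$ for $n\geq 1$, so $Pe^{T}\phi_0=\phi_0$ and the summand collapses to $\langle V\phi_0,(e^{-T_1}-e^{-T_2})PW_{\mathrm{CAS}}\phi_0\rangle$. Writing $PW_{\mathrm{CAS}}\phi_0=\omega_0\phi_0+w^{\perp}$ and using $e^{-T_1}-e^{-T_2}=-(T_1-T_2)\int_0^1 e^{-T_s}\,ds$, the $\phi_0$-component produces the leading term $-\omega_0\Vert t_1-t_2\Vert_2^2$ (up to higher-order remainders in $T_s$), a favorable contribution since $\omega_0<0$. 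The CAS-excited remainder $w^{\perp}$ is the delicate piece: the CAS amplitude equations $\langle\phi_\sigma,\tilde H\phi_0\rangle=0$ for $\sigma\in\mathcal J_{\mathrm{CAS}}$ identify its coefficients as $\langle\phi_\sigma,W_{\mathrm{CAS}}\phi_0\rangle=-t_\sigma^{\mathrm{CAS}}\varepsilon_\sigma$, whence $\Vert w^{\perp}\Vert_1=\Omega_{\mathrm{CAS}}$ and the associated cross terms are bounded in modulus by $\Omega_{\mathrm{CAS}}\Vert t_1-t_2\Vert_2^2$. Finally, the third summand is by construction exactly $\langle (T_1-T_2)\phi_0, O(t_1)-O(t_2)\rangle$, which the remark following Assumption~(B) bounds by $C_*\Vert t_1-t_2\Vert_2^2$ with $C_*\leq C L_*$.

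Collecting the three estimates yields $\langle v, f(t_1)-f(t_2)\rangle \geq (\varepsilon_0-\omega_0-\Omega_{\mathrm{CAS}}-L_*)\Vert t_1-t_2\Vert_2^2$, which is strictly positive by \eqref{eq:LischitzConst}; norm equivalence then upgrades this to strong monotonicity in $\Vert\cdot\Vert_{\mathcal V_{\mathrm{ext}}}$ on $B_\delta(t_*)$. I expect the main obstacle to be the $PW_{\mathrm{CAS}}P$ term: one must confirm the identification $\langle\phi_\sigma,W_{\mathrm{CAS}}\phi_0\rangle=-t_\sigma^{\mathrm{CAS}}\varepsilon_\sigma$ (exact for an exact CAS solution, and controllably small for the approximate $\phi_{\mathrm{CAS}}$ used here) and show that the higher-order remainders from the two nested exponentials $e^{\pm T}$, $e^{\pm T^{\mathrm{CAS}}}$ together with the CAS cross terms are uniformly controlled on the ball, shrinking $\delta$ so that they are absorbed into the strict slack of \eqref{eq:LischitzConst}. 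The careful bookkeeping of the projections $P$ against these exponentials is the technical heart of the argument.
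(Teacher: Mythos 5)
Your proposal is correct and follows essentially the same route as the paper's proof: the same splitting $H = F + PWP + (W-PWP)$ conjugated by $e^{\pm T^{\mathrm{CAS}}}$, the same three-term decomposition with the Fock part reduced to $\Vert t_1-t_2\Vert_{\mathcal{V}_{\mathrm{ext}}}^2$ via Lemma~\ref{Lemma:FockOperator} and Proposition~\ref{prop:Focknorm_ExtNorm}, the same key identity $\langle\phi_\sigma,W_{\mathrm{CAS}}\phi_0\rangle=-t_\sigma^{\mathrm{CAS}}\varepsilon_\sigma$ (exact for the CAS--FCI solution, with a controllable residual $\mathcal{R}$ otherwise) yielding the $\omega_0$ and $\Omega_{\mathrm{CAS}}$ terms, and the same use of Assumption~(B) plus the slack in Eq.~\eqref{eq:LischitzConst} to absorb higher-order remainders on a small ball. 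The only deviations are cosmetic: you use an integral representation of $e^{-T_1}-e^{-T_2}$ instead of the paper's power-series expansion, and you collect all bounds in $\Vert\cdot\Vert_2$ before invoking finite-dimensional norm equivalence, whereas the paper retains a $\lambda$-fraction of the $\mathcal{V}_{\mathrm{ext}}$-norm directly.
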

\begin{proof}
Let $t_1,t_2\in B_\delta(t_*)\subseteq\mathcal{V}_{\mathrm{ext}}$ and write the Hamiltonian as in Eq.~\eqref{eq:H-split}.
With the notation $\delta_f = \langle f(t_1)-f(t_2),t_1-t_2\rangle$, $\delta_T = T_1-T_2$ and $H_{t_i}=e^{-T_i}He^{T_i}$, the definition of the TCC function $f$ and Lemma \ref{Lemma:FockOperator} yield 
\begin{equation*}
\begin{aligned}
\delta_f &=
\langle \delta_T \phi_0, e^{-T^\mathrm{CAS}} (H_{t_1} -H_{t_2}) e^{T^\mathrm{CAS}} \phi_0 \rangle\\
&=
\langle \delta_T \phi_0, e^{-T^\mathrm{CAS}} [F,\delta_T] e^{T^\mathrm{CAS}} \phi_0 \rangle
+\langle \delta T \phi_0,( e^{-T_1} PW_{\rm CAS}P -e^{-T_2} PW_{\rm CAS}P ) \phi_0\rangle \\
&\quad+ 
\langle \delta_T \phi_0, O(t_1)-O(t_2) \rangle\\
&= \delta_1 + \delta_2+ \delta_3~,
\end{aligned}
\end{equation*}
where the last equality defines $\delta_1,\delta_2$ and $\delta_3$. \newline
\indent To bound $\delta_1$ from below, we first note that Lemma \ref{Lemma:FockOperator} implies
\begin{equation*}
[F, e^{ T^\mathrm{CAS}} ] = \sum_{n=1}^{N} \frac 1 {n!} \sum_{\mu \in \mathcal J_\mathrm{CAS}} (t_{\mathrm{CAS}}^{(n)})_\mu X_\mu
= S~.
\end{equation*}
Since $S$ commutes with $e^{\pm T^\mathrm{CAS}}$ and $\delta_T$, we obtain
\begin{equation*}
\begin{aligned}
e^{-T^\mathrm{CAS}}  [ F,\delta_T] e^{T^\mathrm{CAS}} 
&=
e^{-T^\mathrm{CAS}} ( (S+e^{T^{\mathrm{CAS}}}F) \delta_T -\delta_T (S+e^{T^{\mathrm{CAS}}}F) ) = 
F \delta_T - \delta_T  F~,
\label{eq:comm}
\end{aligned}
\end{equation*}
and consequently $\delta_1 = \langle \delta_T \phi_0, (F - \Lambda_0)  \delta_T\phi_0 \rangle = \sum_{\mu \in \mathcal J_\mathrm{ext}} \varepsilon_\mu (t_1-t_2)_\mu^2$. \newline
\indent Next we find 
\begin{equation}
\label{eq:delta_2_reform}
\begin{aligned}
\delta_2&=\langle \delta T \phi_0,( e^{-T_1} PW_{\rm CAS} -e^{-T_2} PW_{\rm CAS} ) \phi_0\rangle\\
&=
-\langle \delta T \phi_0, \delta T  PW_{\rm CAS} \phi_0\rangle
+
\sum_{k=2}^{\infty}\frac{(-1)^k}{k!}
\langle \delta T \phi_0,( T_2^k-T_1^k) PW_{\rm CAS} \phi_0\rangle\\
&=
-\sum_{\mu\in\mathcal{J}_{\rm ext}}(t_1-t_2)_{\mu}^2\langle  \phi_0, PW_{\rm CAS} \phi_0\rangle\\
&\quad-\sum_{\substack{\mu\neq\nu\in\mathcal{J}_{\rm ext}\\\mu\ominus\nu\in {\rm CAS}}}
(t_1-t_2)_{\mu}(t_1-t_2)_{\nu}\langle  \phi_{\mu\ominus \nu}, PW_{\rm CAS} \phi_0\rangle\\
&\quad+
\sum_{k=2}^{\infty}\frac{(-1)^k}{k!}
\langle \delta T \phi_0,( T_2^k-T_1^k) PW_{\rm CAS} \phi_0\rangle~.\\
\end{aligned}
\end{equation}
We now define $\delta\Psi = \phi_{\rm CAS} -  \psi_{\rm CAS}^{\rm (FCI)}$ with $\phi_{\rm CAS} = \exp(T^{\rm CAS})\phi_0 \approx \psi_{\rm CAS}^{\rm (FCI)}$, where $PHP\Psi_{\rm CAS}^* = E_{\rm CAS}^{\rm (FCI)}\Psi_{\rm CAS}^*$. We know that 
\begin{equation*}
\begin{aligned}
PW_{\rm CAS}P\phi_0
&=
Pe^{-T^{\rm CAS}}H\phi_{\rm CAS}
-Pe^{-T^{\rm CAS}}Fe^{T^{\rm CAS}}\phi_{0}\\
&=
E_{\rm CAS}^{\rm (FCI)}Pe^{-T^{\rm CAS}}\Psi_{\rm CAS}^*
+ Pe^{-T^{\rm CAS}}H\delta\Psi
-P(F+[F,T^{\rm CAS}])\phi_{0}\\
&=
E_{\rm CAS}^{\rm (FCI)}\phi_{ 0}
+ Pe^{-T^{\rm CAS}}(H - E_{\rm CAS}^{\rm (FCI)} )\delta\Psi
-P(F+[F,T^{\rm CAS}])\phi_{0}~.\\
\end{aligned}
\end{equation*}
Since we are merely interested in the projections onto $\phi_{\sigma}$ with  $\sigma \in \mathcal{J}_{\rm CAS}$, we set $\mathcal{R}= \langle \phi_{\sigma},Pe^{-T^{\rm CAS}}(H - E_{\rm CAS}^{\rm (FCI)} )\delta\Psi\rangle$
and obtain
\begin{equation}
\label{eq:ProjectionExplicit}
\begin{aligned}
\langle
\phi_{\sigma}, PW_{\rm CAS} \phi_0
\rangle
&=
\langle \phi_{\sigma},Pe^{-T^{\rm CAS}}(H - E_{\rm CAS}^{\rm (FCI)} )\delta\Psi\rangle
-\langle \phi_{\sigma},[F,T^{\rm CAS}]\phi_{0}\rangle\\
&=
\mathcal{R}
+\langle
\phi_{\sigma}, T^{\rm CAS}F\phi_0
\rangle
-\langle
\phi_{\sigma}, FT^{\rm CAS}\phi_0
\rangle\\
&=
\mathcal{R}
+\sum_{\mu}t_{\mu}^\mathrm{CAS}\Lambda_0
\langle
\phi_{\sigma}, \phi_\mu
\rangle
-\sum_{\mu}t_{\mu}(\Lambda_0+\varepsilon_\sigma)\langle
\phi_{\sigma}, \phi_\mu
\rangle\\
&=
\mathcal{R}
+t_{\sigma}\Lambda_0
-t_{\sigma}(\Lambda_0+\varepsilon_\sigma)
=
\mathcal{R}
-t_\sigma\varepsilon_\sigma ~.
\end{aligned}
\end{equation}
The quantity $\mathcal{R} \sim \Vert \delta \Psi\Vert_{L^2}$ is directly steerable by the used CAS method.
Hence, assuming $\phi_{\rm CAS}\approx \psi_{\rm CAS}^{\rm (FCI)}$ to be a sufficiently good approximation eliminates the above $\mathcal{R}$ dependence.
Inserting Eq.~\eqref{eq:ProjectionExplicit} in the second term of Eq.~\eqref{eq:delta_2_reform}, we find
\begin{equation}
\label{eq:Rough_estimate_for_garbage_term}
\begin{aligned}
&\sum_{\substack{\mu\neq\nu\in\mathcal{J}_{\rm ext}\\\mu\ominus\nu\in {\rm CAS}}}
|(t_1-t_2)_{\mu}(t_1-t_2)_{\nu}|t_{\mu\ominus \nu}|\varepsilon_{\mu\ominus \nu}|\\
&\leq
\Big(
\sum_{\substack{\mu\neq\nu\in\mathcal{J}_{\rm ext}\\\mu\ominus\nu\in {\rm CAS}}}
(t_1-t_2)_{\mu}^2|t_{\mu\ominus \nu}\varepsilon_{\mu\ominus \nu}|
\Big)^{\frac{1}{2}}
\times
\Big(
\sum_{\substack{\mu\neq\nu\in\mathcal{J}_{\rm ext}\\\mu\ominus\nu\in {\rm CAS}}}
(t_1-t_2)_{\nu}^2|t_{\mu\ominus \nu}\varepsilon_{\mu\ominus \nu}|
\Big)^{\frac{1}{2}}\\
&\leq
\Omega_{\rm CAS}
\Vert t_1-t_2\Vert_2^2~,
\end{aligned}
\end{equation}
where we recall that $\Omega_{\rm CAS} = \sum_{\sigma\in\mathcal{J}_{\rm CAS}}|t_{\sigma}\varepsilon_{\sigma}|$ as defined in Assumption (B).
Since $\omega_0 = \langle\phi_0,W_{\rm CAS}\phi_0 \rangle$ and $\Vert T_1^k-T_2^k\Vert_{L^2}\in\mathcal{O}(\Vert t_1-t_2\Vert_{2}^k)$, we conclude with Proposition~\ref{prop:Focknorm_ExtNorm} that
\begin{equation}
\begin{aligned}
\label{eq:delta_2_new}
\delta_2
&\geq
- (\omega_0  + \Omega_\mathrm{CAS})\Vert t_1-t_2\Vert^2_2
+\mathcal{O}(\Vert t_1-t_2\Vert_{\mathcal V_\mathrm{ext}}^3)~.
\end{aligned}
\end{equation}

For the last term, Assumption (B) implies that
\begin{align*}
    \delta_3 \geq - \Vert \delta_T \phi_0 \Vert_{L^2} \Vert O(t_1)-O(t_2) \Vert_{L^2} \geq -L_* \Vert t_1-t_2\Vert_2^2 ~.
\end{align*}
Combining the different bounds above and assuming that $\varepsilon_0$, $\omega_0$, $\Omega_\mathrm{CAS}$, and $L_*$ fulfill Eq.~\eqref{eq:LischitzConst}, we conclude the existence of a $\lambda\in (0,1)$ such that 
\begin{align*}
\delta_f &\geq
\lambda \sum_\mu \varepsilon_\mu (t_1-t_2)_\mu^2 + \sum_\mu \Big[  (1-\lambda)\varepsilon_\mu  - \omega_0   - \Big(L_*+ \Omega_\mathrm{CAS} \Big)\Big](t_1-t_2)_\mu^2 \\ 
& \quad +\mathcal{O}(\Vert t_1-t_2\Vert_{\mathcal V_\mathrm{ext}}^3) \\
&\geq \lambda \Vert t_1-t_2 \Vert_{\mathcal{V}_{\mathrm{ext}}}^2 + \sum_\mu \Big[  (1-\lambda)\varepsilon_0  - \omega_0   - \Big(L_*+ \Omega_\mathrm{CAS} \Big)\Big](t_1-t_2)_\mu^2 \\
& \quad +\mathcal{O}(\Vert t_1-t_2\Vert_{\mathcal V_\mathrm{ext}}^3) \\
&\geq \lambda \Vert t_1 - t_2 \Vert_{\mathcal V_\mathrm{ext}}^2   +\mathcal{O}(\Vert t_1-t_2\Vert_{\mathcal V_\mathrm{ext}}^3) \geq \gamma \Vert t_1 - t_2 \Vert_{\mathcal V_\mathrm{ext}}^2 \sim  \Vert \delta_T \phi_0 \Vert_{H^1}^2   ~.
\end{align*}
In the last step we have assumed $\delta$ to be sufficiently small such that $\mathcal{O}(\Vert t_1-t_2\Vert_{\mathcal V_\mathrm{ext}}^3)$ can be absorbed. 
\end{proof}

\begin{rmk}
We note that Eq.~\eqref{eq:Rough_estimate_for_garbage_term} is a pessimistic estimation, since we neglect the conditions of the excitation indices, i.e., $\mu\neq\nu$ such that $\mu \ominus \nu \in\mathcal{J}_{\rm CAS}$. 
This restriction means that the excitation rank of the CC method dictates which CAS amplitudes are considered.
In particular, considering the DMRG-TCCSD method we find
\begin{equation*}
\begin{aligned}
\sum_{\substack{
\nu\in\mathcal{J}_{\rm ext}\\
\nu\neq\mu\\
\mu\ominus\nu\in {\rm CAS}}
}
|t_{\mu\ominus \nu}\varepsilon_{\mu\ominus \nu}|
\leq
\sum_{\sigma\in \mathcal{J}_{\rm CAS}^{(1)}}
|t_{\sigma}\varepsilon_{\sigma}|~,
\end{aligned}
\end{equation*}
where the superscripted $\mathcal{J}_{\rm CAS}^{(1)}$ means that only single-excitations on the CAS are considered---which correspond to orbital rotations.
Moreover, without loss of generality one can assume Brueckner type orbitals, implying that this term vanishes.
\end{rmk}

By Theorem \ref{Th:LipschitzCont} and \ref{Th:StronglyMon}, we can apply Lemma~\ref{Th:Zarantonello} to the TCC function $f$ ensuring a locally unique and quasi-optimal approximate solutions.
Next, we will show quadratic convergence of tailored coupled-cluster methods which aligns the non-variational TCC approach with any variational method in terms of convergence speed.  

\subsection{Error Estimate}
\label{Sec:Error}
In this section we present an estimate for the energy error introduced by truncating the TCC method, e.q, DMRG-TCCSD.
In comparison to the single-reference CC method, the error is divided into different parts as a consequence of the basis splitting. 
The TCC function is typically parameterized by an approximation $T^{\mathrm{CAS}}$ of the FCI solution $T_{\mathrm{FCI}}^{\mathrm{CAS}}$ on $\mathcal{H}_{\mathrm{CAS}}$. 
We emphasize that $T_{\mathrm{FCI}}^{\mathrm{CAS}}$ is in itself an approximation of the inaccessible $T_*^{\mathrm{CAS}}$ (cf. Theorem~\ref{Thm:Implication} and the following discussion). 
This of course influences the error and is here accounted for. 
On top of that, the truncation error of the CC method applied to $\phi_{\mathrm{CAS}}=e^{T^{\mathrm{CAS}}} \phi_0$ enters.     
For this part of the error we follow the analysis of the  single-reference CC methods and use the Aubin-Nitsche-duality method for nonlinear Galerkin schemes, see \cite{rohwedder2013error}.
We consider $d$-dimensional approximation spaces $\mathcal V_{\mathrm{ext}}^{(d)}$, $d \leq \vert \mathcal J \vert $, of the external amplitude space $\mathcal V_{\mathrm{ext}}$. 
For a given $T^{\mathrm{CAS}}$ we denote $t_d \in \mathcal V_{\mathrm{ext}}^{(d)}$ the solution of $P_{d}f(\,\cdot\,;t^{\mathrm{CAS}})|_{\mathcal{V}_{\mathrm{ext}}^{(d)}}=0$, where $P_{d}$ is the $l^2$-orthogonal projection onto $(\mathcal{V}_{\mathrm{ext}}^{(d)})'$.
Thus, $t_d$ is an approximation of the full solution $t_*\in\mathcal V_{\mathrm{ext}}$, where $t_*$ solves $f(\,\cdot\,; t^{\mathrm{CAS}})=0$ on $\mathcal V_{\mathrm{ext}}$.

\begin{rmk}
    In practice, the space $\mathcal{V}_\text{ext}^{(d)}$ is constructed by restricting the CC amplitudes to a particular subspace, e.g., allowing excitations from the reference $\phi_0$ into the external space of rank less than a fixed number, say, including up to singles and doubles.
    This choice is practical (the dimension $d$ is fairly low), however, the alternative truncation that allows excitations from \emph{any} CAS determinant $\phi_\alpha$ into the external space of rank less than a fixed number yields what is called the first-order interaction space~\cite{lyakh2012multireference}. While the dimension can be much higher than the previous choice, it gives external correlation energies guaranteed to be correct through second order in $H_1 = W - PWP$. In other words, all CAS determinants are treated on equal footing, which is essential for an optimal multireference treatment. The first truncation scheme puts special significance to the reference $\phi_0$. 
    \end{rmk}
    
\indent
We will here derive a general error estimate valid for every choice of method used on $\mathcal{H}_{\mathrm{CAS}}$ potentially introducing an additional error on the CAS denoted $\delta E_\mathrm{CAS}$.
In notational consistency with the introduction of Section~\ref{Sec:Analysis}, let $\psi_* = e^{T_*^{\mathrm{ext}}} e^{T_*^{\mathrm{CAS}}} \phi_0 $ be the exponential parameterization of the FCI solution on $\mathcal{H}_K$. 
Then, the energy error is subsequently split as follows
\begin{equation}
\label{eq:EestMain}
\begin{aligned}
\delta E &=
| \mathcal E(t_d;t^{\mathrm{CAS}}) -\mathcal E(t_*^{\mathrm{ext}};t_*^{\mathrm{CAS}})| \\
&\leq
| \mathcal E(t_d;t^{\mathrm{CAS}}) - \mathcal{E}(t_*;t^{\mathrm{CAS}}  )| +| \mathcal E(t_*;t^{\mathrm{CAS}}) - \mathcal{E}(t_*;t_{\mathrm{FCI}}^{\mathrm{CAS}}  )| \\
& \quad+| \mathcal E(t_*;t_{\mathrm{FCI}}^{\mathrm{CAS}}) - \mathcal{E}(t_*^{\mathrm{ext}};t_*^{\mathrm{CAS}}  )|  \\
& =: \delta \varepsilon + \delta \varepsilon_{\mathrm{CAS}} + \delta \varepsilon_{\mathrm{CAS}}^*~, 
\end{aligned}
\end{equation}
where the last equality defines the different error terms.

The quantity $\delta\varepsilon$ describes the error produced by truncating the TCC method parameterized by $\phi_{\mathrm{CAS}}=e^{T^{\mathrm{CAS}}}\phi_0$.
The second term $\delta\varepsilon_{\mathrm{CAS}}$ is connected to the usage of an approximate solution $\psi_{\mathrm{CAS}}=e^{T^{\mathrm{CAS}}}\phi_0$ on $\mathcal{H}_{\mathrm{CAS}}$ instead of the FCI solution $\phi^{(\mathrm{FCI})}_{\mathrm{CAS}}=e^{T_{\mathrm{FCI}}^{\mathrm{CAS}}}\phi_0$.
We introduce $\tilde t_*\in \mathcal V_{\mathrm{ext}}$ that solves $f(\tilde t_*; t_{\mathrm{FCI}}^{\mathrm{CAS}})=0$. Note that the pair $(\tilde t_*,t_{\mathrm{FCI}}^{\mathrm{CAS}}) \in \mathcal V_{\mathrm{CAS}} \times \mathcal V_{\mathrm{ext}}$ is the best solution possible using a given basis splitting. We emphasize, in comparison, that $t_* = (t_*^{\mathrm{CAS}},t_*^{\mathrm{ext}})$ is a theoretical construct where the basis splitting has been done after computing $t_*$.

The main result of this section is given below in Theorem~\ref{thm:err_main}. The idea is to bound $\delta E$ by means of the splitting above. We introduce the error $\delta E_{\mathrm{CAS}}$ in the following way:
The wavefunction $e^{T_{\mathrm{FCI}}^{\mathrm{CAS}}}\phi_0$ is in general not an eigenfunction of $H$, however, it is an eigenfunction of $PHP$ where $P$ is the orthogonal projection on $\mathcal H_{\mathrm{CAS}}$. 
We then define
\begin{equation}
\label{eq:Ecas}
\delta E_{\mathrm{CAS}} =|\langle
\phi_0, \big(e^{-T^{\mathrm{CAS}}}PHPe^{T^{\mathrm{CAS}}}-e^{-T_{\mathrm{FCI}}^{\mathrm{CAS}}}PHPe^{T_{\mathrm{FCI}}^{\mathrm{CAS}}}\big)\phi_0
\rangle|~.
\end{equation}
The energy difference $\delta E_{\mathrm{CAS}}$ describes the error induced by an approximation to the FCI solution on $\mathcal{H}_{\mathrm{CAS}}$.
We emphasize that this error depends on the approximation method used.
Using the DMRG, which is a variational method, yields a quadratic
error bound.

The error $\delta \varepsilon$ is estimated using similar techniques as described in Ref.~\cite{rohwedder2013error}.
To that end, we define the following Euler-Lagrange systems.
For notational simplicity we drop again the explicit parameterization by $t^{\mathrm{CAS}}$.
We consider the functionals 
\[
\langle f(t),\cdot \rangle: \mathcal{V}_{\mathrm{ext}} \to \mathbb{R};~ u\mapsto \langle  U\phi_{0}, e^{-T^{\mathrm{CAS}}}e^{-T}He^{T}e^{T^{\mathrm{CAS}}}\phi_0  \rangle
\]
and
\[
\mathcal{E}(\cdot): \mathcal{V}_{\mathrm{ext}} \to \mathbb{R};~ u\mapsto \langle  \phi_{0}, e^{-T^{\mathrm{CAS}}}e^{-U}He^{U}e^{T^{\mathrm{CAS}}}\phi_0  \rangle~.
\]
We note that $\langle f(t),\cdot \rangle$ is a real-valued linear form whereas $\mathcal{E}(\cdot)$ is a nonlinear functional.
The corresponding variational problem 
\begin{equation}
\langle f(t),u \rangle=0\quad, \forall u \in \mathcal{V}_{\mathrm{ext}}
\label{eq:Clustereq}
\end{equation}
describes the cluster equations.
The associated Galerkin approximation on $\mathcal{V}_{\mathrm{ext}}^{(d)}\subseteq \mathcal{V}_{\mathrm{ext}}$ determines $t_d\in \mathcal{V}_{\mathrm{ext}}^{(d)}$ such that
\begin{equation}
\langle f(t_d),u_d \rangle=0\quad, \forall u_d \in \mathcal{V}_{\mathrm{ext}}^{(d)}~.
\label{eq:ClustereqDis}
\end{equation}

We use the Euler-Lagrange method to estimate the error $\mathcal{E}(t)-\mathcal{E}(t_d)$. 
Introducing the dual variable $z\in \mathcal{V}_{\mathrm{ext}}$, we define the Lagrangian
\begin{equation}
\mathcal{L}:\mathcal{V}_{\mathrm{ext}}\times \mathcal{V}_{\mathrm{ext}}\to \mathbb{R};\quad
(t,z)\mapsto \mathcal{E}(t)-\langle f(t),z \rangle,
\label{eq:Langrangian}
\end{equation}
and seek for stationary points $(t_*,z_*)\in\mathcal{V}_{\mathrm{ext}}\times \mathcal{V}_{\mathrm{ext}}$ of $\mathcal{L}(\cdot, \cdot)$, i.e., 
\begin{equation}
\mathcal{L}'(t_*,z_*)(u,v)
=\left\lbrace
\begin{aligned}
\mathcal{E}'(&t_*)u - \langle f'(t_*)u,z_* \rangle\\
&- \langle f(t_*),v \rangle
\end{aligned}
\right\rbrace=0 ~,
\label{eq:StatEL}
\end{equation}
for all $(u,v)\in\mathcal{V}_{\mathrm{ext}}\times \mathcal{V}_{\mathrm{ext}}$.
The Galerkin approximations $(t_d,z_d)\in\mathcal{V}_{\mathrm{ext}}^{(d)}\times \mathcal{V}_{\mathrm{ext}}^{(d)}$ are defined by the discrete Euler-Lagrange system
\begin{equation}
\mathcal{L}'(t_d,z_d)(u_d,v_d)
=\left\lbrace
\begin{aligned}
\mathcal{E}'(&t_d)u_d - \langle f'(t_d)u_d,z_d \rangle\\
&- \langle f(t_d),v_d \rangle
\end{aligned}
\right\rbrace=0 ~,
\label{eq:StatELDis}
\end{equation}
for all $(u_d,v_d)\in\mathcal{V}_{\mathrm{ext}}^{(d)}\times \mathcal{V}_{\mathrm{ext}}^{(d)}$.
We remark that in both situations \eqref{eq:StatEL} and \eqref{eq:StatELDis}, the $t$- respectively the $t_d$-component of any stationary point is a solution of the cluster equations and the discrete cluster equations, respectively.

The main results of this section now reads:
\begin{theorem}
	\label{thm:err_main}
	Let $\mathscr{B}=\{\chi_1,...,\chi_K\}\subseteq H^1$ be a set of $L^2$-orthonormal spin-orbitals that are split into $\mathscr{B}_{\mathrm{CAS}}$ and $\mathscr{B}_{\mathrm{ext}}$. 
	We denote $\mathcal{H}_K$ and $\mathcal{H}_{\mathrm{CAS}}$ the FCI space corresponding to $\mathscr{B}$ resp.~$\mathscr{B}_{\mathrm{CAS}}$. 
	Let further $t_*^{\mathrm{CAS}}\in \mathcal{V}_{\mathrm{CAS}}$ be the projection of the FCI amplitudes on $\mathcal{H}_K$ onto $\mathcal{H}_{\mathrm{CAS}}$, $t_{\mathrm{FCI}}^{\mathrm{CAS}}\in \mathcal{V}_{\mathrm{CAS}}$ the FCI amplitudes on $\mathcal{H}_{\mathrm{CAS}}$, and $t^{\mathrm{CAS}}\in \mathcal{V}_{\mathrm{CAS}}$ an approximation to $t_{\mathrm{FCI}}^{\mathrm{CAS}}$. Let $\mathcal{V}_{\mathrm{ext}}^{(d)}\subset \mathcal{V}_{\mathrm{ext}}$ be a subspace fulfilling 
	\begin{equation}
	d(t_*,\mathcal{V}_{\mathrm{ext}}^{(d)})\leq \frac{ \gamma \delta}{\gamma+L}~,
	\label{eq:SuffCond}
	\end{equation}
	where $\gamma,L>0$ are the monotonicity and Lipschitz
        constants of $f(\,\cdot\,;t^{\mathrm{CAS}})$ on
        $B_\delta(t_*)$. Then there is a unique solution $t_d \in
        \mathcal V_{\mathrm{ext}}^{(d)}$ of $P_d f(\,\cdot\,;t^{\mathrm{CAS}})|_{\mathcal{V}_{\mathrm{ext}}^{(d)}}=0$ that approximates the solution $t_*\in\mathcal V_{\mathrm{ext}}$ of $f(\,\cdot\,; t^{\mathrm{CAS}})=0$ on $\mathcal V_{\mathrm{ext}}$. 
        Let $(z_d,z_*)\in\mathcal{V}_{\mathrm{ext}}^{(d)}\times\mathcal{V}_{\mathrm{ext}}$ be the corresponding dual solutions of $(t_d,t_*)\in\mathcal{V}_{\mathrm{ext}}^{(d)}\times\mathcal{V}_{\mathrm{ext}}$. 
        Further, set $\tilde{t}_*\in \mathcal V_{\mathrm{ext}}$ the solution of $f(\,\cdot\,; t^{\mathrm{CAS}}_{\mathrm{FCI}})=0$ on $\mathcal V_{\mathrm{ext}}$ and $t_*^{\mathrm{ext}}\in\mathcal V_{\mathrm{ext}}$ the projection of the FCI amplitudes on $\mathcal{H}_K$ onto $\mathcal{H}_{\mathrm{CAS}}^{\perp}$.
	It then follows that the energy error can be bounded as	
	\begin{align*}
	\delta E &\lesssim  \Vert t_d-t_*\Vert_{\mathcal{V}_{\mathrm{ext}}}\left(\Vert t_d-t_*\Vert_{\mathcal{V}_{\mathrm{ext}}} +\Vert z_d-z_*\Vert_{\mathcal{V}_{\mathrm{ext}}}\right) + 	\Vert t_* - t_*^{\mathrm{ext}}\Vert_{\mathcal{V}_{\mathrm{ext}}}^2  + \Vert t_* - \tilde t_*\Vert_{\mathcal V_{\mathrm{ext}}}^2\\
	& \quad    + \Vert t_{\mathrm{FCI}}^{\mathrm{CAS}}-t_*^{\mathrm{CAS}}  \Vert_{2}^2     + \Vert t^{\mathrm{CAS}} - t_{\mathrm{FCI}}^{\mathrm{CAS}}\Vert_{2}^2
	+ \sum_{\substack{
		\mu\in\mathcal{J}_{\mathrm{ext}}\\|\mu|=1}
		} \varepsilon_\mu (\tilde t_*)_\mu^2 ~ + \delta E_{\mathrm{CAS}}~.
	\end{align*}
\end{theorem}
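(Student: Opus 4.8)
The plan is to prove existence and uniqueness of $t_d$ first, and then to control $\delta E$ through the three-term splitting in Eq.~\eqref{eq:EestMain}, estimating each contribution separately. For the existence and uniqueness of $t_d$, I would invoke Lemma~\ref{Th:Zarantonello}: Theorems~\ref{Th:LipschitzCont} and \ref{Th:StronglyMon} supply the Lipschitz constant $L$ and the strong-monotonicity constant $\gamma$ of $f(\,\cdot\,;t^{\mathrm{CAS}})$ on $B_\delta(t_*)$, and the hypothesis \eqref{eq:SuffCond} forces the best-approximation distance $d(t_*,\mathcal{V}_{\mathrm{ext}}^{(d)})$ to be small enough that the discrete root $t_d$ exists in $B_\delta(t_*)$ and obeys the quasi-optimality bound $\Vert t_*-t_d\Vert_{\mathcal{V}_{\mathrm{ext}}}\leq (L/\gamma)\,d(t_*,\mathcal{V}_{\mathrm{ext}}^{(d)})$. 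This also licenses the use of the associated dual pair $(z_*,z_d)$ from the Euler--Lagrange systems \eqref{eq:StatEL} and \eqref{eq:StatELDis}.

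The core of the argument is the bound on the truncation error $\delta\varepsilon=|\mathcal E(t_d)-\mathcal E(t_*)|$ via the Aubin--Nitsche duality technique of Ref.~\cite{rohwedder2013error}, applied to the Lagrangian \eqref{eq:Langrangian}. Since $f(t_*)=0$, one has $\mathcal E(t_*)=\mathcal L(t_*,z_*)$, and likewise $\mathcal E(t_d)=\mathcal L(t_d,z_d)$ because $\langle f(t_d),z_d\rangle$ vanishes against discrete test directions. I would expand $\mathcal L$ to second order about the stationary point $(t_*,z_*)$; by Eq.~\eqref{eq:StatEL} the first-order term vanishes, leaving a quadratic remainder in $(t_d-t_*,z_d-z_*)$. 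Using the discrete stationarity \eqref{eq:StatELDis} as Galerkin orthogonality, the remainder is controlled by $\Vert t_d-t_*\Vert_{\mathcal{V}_{\mathrm{ext}}}\bigl(\Vert t_d-t_*\Vert_{\mathcal{V}_{\mathrm{ext}}}+\Vert z_d-z_*\Vert_{\mathcal{V}_{\mathrm{ext}}}\bigr)$, with the constants supplied by boundedness of the second and third derivatives of $f$ and $\mathcal E$ (Theorem~\ref{Th:LipschitzCont}). This produces the first line of the claimed estimate.

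For the two CAS-related contributions I would exploit the smoothness of $\mathcal E$ in its CAS parameter together with the stationarity of the CAS problem. The term $\delta\varepsilon_{\mathrm{CAS}}$ compares $\mathcal E(t_*;t^{\mathrm{CAS}})$ with $\mathcal E(t_*;t_{\mathrm{FCI}}^{\mathrm{CAS}})$; since $e^{T_{\mathrm{FCI}}^{\mathrm{CAS}}}\phi_0$ is an eigenfunction of $PHP$, the first-order variation along the CAS direction is governed by $\delta E_{\mathrm{CAS}}$ of Eq.~\eqref{eq:Ecas}, and the residual is quadratic in $\Vert t^{\mathrm{CAS}}-t_{\mathrm{FCI}}^{\mathrm{CAS}}\Vert_2$, yielding the terms $\Vert t^{\mathrm{CAS}}-t_{\mathrm{FCI}}^{\mathrm{CAS}}\Vert_2^2+\delta E_{\mathrm{CAS}}$. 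For $\delta\varepsilon_{\mathrm{CAS}}^*$ I would introduce the auxiliary root $\tilde t_*$ of $f(\,\cdot\,;t_{\mathrm{FCI}}^{\mathrm{CAS}})=0$ and telescope through $(\tilde t_*,t_{\mathrm{FCI}}^{\mathrm{CAS}})$; because $\mathcal E(t_*^{\mathrm{ext}};t_*^{\mathrm{CAS}})=E$ by Theorem~\ref{Thm:Implication}, each telescoped energy difference is bounded quadratically, producing $\Vert t_*-\tilde t_*\Vert_{\mathcal V_{\mathrm{ext}}}^2$, $\Vert t_*-t_*^{\mathrm{ext}}\Vert_{\mathcal V_{\mathrm{ext}}}^2$, and $\Vert t_{\mathrm{FCI}}^{\mathrm{CAS}}-t_*^{\mathrm{CAS}}\Vert_2^2$.

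The main obstacle I anticipate is the external-singles sum $\sum_{|\mu|=1,\,\mu\in\mathcal{J}_{\mathrm{ext}}}\varepsilon_\mu(\tilde t_*)_\mu^2$. Unlike the other contributions, the comparison of the fixed-CAS TCC energy against the exact eigenvalue $E$ leaves a first-order term in the external single excitations that does not cancel by a Brillouin-type argument, precisely because the CAS solution is not variationally coupled to the external space. Carefully isolating this surviving single-excitation contribution, and confirming that every remaining cross term is genuinely quadratic rather than linear, is the delicate bookkeeping step on which the whole estimate hinges.
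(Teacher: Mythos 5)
Your outline reproduces the paper's architecture: the splitting \eqref{eq:EestMain} into $\delta\varepsilon+\delta\varepsilon_{\mathrm{CAS}}+\delta\varepsilon_{\mathrm{CAS}}^*$, local Lipschitz continuity and strong monotonicity (Theorems~\ref{Th:LipschitzCont} and \ref{Th:StronglyMon}) combined with Lemma~\ref{Th:Zarantonello} for the discrete solution $t_d$, a dual-weighted-residual argument for $\delta\varepsilon$ (this is precisely the Bangerth--Rannacher representation, Theorem~\ref{th:Bangerth-Rannacher}, exploited in Theorem~\ref{Thm:QE}), and the stationarity $D\mathcal E(t_*^{\mathrm{ext}};t_*^{\mathrm{CAS}})=0$ of the FCI point for $\delta\varepsilon_{\mathrm{CAS}}^*$ (Lemma~\ref{lemma:deltaEcasStar}). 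The paper additionally spells out existence of $t_d$ via a Brouwer fixed-point argument (Theorem~\ref{th:Brouwer}) and secures the dual pair $z_*,z_d$ via Lax--Milgram and C\'ea (Lemma~\ref{Lemma:QuasiOptimalDualSol}), but these are details your plan subsumes.

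The genuine gap is the step you yourself flag and postpone: the external-singles sum $\sum_{|\mu|=1}\varepsilon_\mu(\tilde t_*)_\mu^2$. First, you misplace its origin. It does not come from comparing the fixed-CAS energy against the exact eigenvalue $E$; that comparison, $\delta\varepsilon_{\mathrm{CAS}}^*$, is rendered purely quadratic in the differences $t_*-t_*^{\mathrm{ext}}$ and $t_{\mathrm{FCI}}^{\mathrm{CAS}}-t_*^{\mathrm{CAS}}$ by the stationarity of the FCI point (Lemma~\ref{lemma:deltaEcasStar}); no singles term survives there. The singles sum (and also $\Vert t_*-\tilde t_*\Vert_{\mathcal V_{\mathrm{ext}}}^2$, which you ascribe to telescoping in $\delta\varepsilon_{\mathrm{CAS}}^*$) arises instead inside $\delta\varepsilon_{\mathrm{CAS}}$ (Lemma~\ref{lemma:deltaEcas}), from the commutator remainder $\mathcal R=|\langle\phi_0,[(e^{-T^{\mathrm{CAS}}}He^{T^{\mathrm{CAS}}}-e^{-T_{\mathrm{FCI}}^{\mathrm{CAS}}}He^{T_{\mathrm{FCI}}^{\mathrm{CAS}}}),e^{T_*}]\phi_0\rangle|$ that couples the CAS discrepancy to the fixed external amplitudes. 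Your assertion that this contribution is ``quadratic in $\Vert t^{\mathrm{CAS}}-t_{\mathrm{FCI}}^{\mathrm{CAS}}\Vert_2$'' overlooks that $\mathcal R$ is \emph{bilinear}---first order in the CAS difference and first order in the external amplitudes---so it is not a priori quadratic in any single error quantity. The paper resolves this by (i) splitting $H=F+W$ and using Lemma~\ref{Lemma:FockOperator} together with $[T^{\mathrm{CAS}},T_*]=0$ so that the Fock contribution cancels exactly, (ii) a Baker--Campbell--Hausdorff expansion combined with the Slater--Condon rules for the two-particle operator $W$, which reduce the remainder to $\mathcal R=|\langle\phi_0,W(T^{\mathrm{CAS}}-T_{\mathrm{FCI}}^{\mathrm{CAS}})_1(T_*)_1\phi_0\rangle|$, and (iii) the split $(T_*)_1=(T_*-\tilde T_*)_1+(\tilde T_*)_1$ followed by Young's inequality, which converts the bilinear term into precisely $\Vert t_*-\tilde t_*\Vert_{\mathcal V_{\mathrm{ext}}}^2$, $\Vert(T^{\mathrm{CAS}}-T_{\mathrm{FCI}}^{\mathrm{CAS}})\phi_0\Vert_{H^1}^2$, and the singles sum. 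Without steps (i)--(iii), your decomposition leaves an unresolved first-order cross term, and the claimed quadratic bound does not follow from the outline as written.
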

\begin{rmk}
The energy error estimate in Theorem~\ref{thm:err_main} holds for any basis splitting fulfilling the presented conditions.
However, in the extremal cases of a minimal or maximal basis splitting, i.e., $k=N$ and $k=K$, the TCC method collapses to the CC and CAS method,respectively.
\end{rmk}
\begin{rmk}
	Since we do not have an equivalence of Theorem~\ref{th:OperatorNormEquiv} for sequences over $\mathcal J_{\mathrm{CAS}}$ ($\varepsilon_\mu$ are not guaranteed to be strictly greater than zero for $\mu\in\mathcal J_{\mathrm{CAS}}$), we instead bound the sequences over $\mathcal J_{\mathrm{CAS}}$ using the unweighted $l^2$-norm.
\end{rmk}

We will prove Theorem~\ref{thm:err_main} by first establishing a series of lemmas that relates to the r.h.s. of Eq.~\eqref{eq:EestMain}. We start with the term $
\delta \varepsilon^*_{\mathrm{CAS}} =  | \mathcal E(t_*;t_{\mathrm{FCI}}^{\mathrm{CAS}}) - \mathcal{E}(t_*^{\mathrm{ext}};t_*^{\mathrm{CAS}}  )|$.
\begin{lemma}
	\label{lemma:deltaEcasStar}
	Under the assumptions of Theorem \ref{thm:err_main} the following bound holds
	\begin{align*}
	\delta \varepsilon_{\mathrm{CAS}}^*
	\lesssim
	\Vert  t_* - t_*^{\mathrm{ext}}\Vert_{\mathcal{V}_{\mathrm{ext}}}^2+ \Vert t_{\mathrm{FCI}}^{\mathrm{CAS}}-t_*^{\mathrm{CAS}}  \Vert_{2}^2~.
	\end{align*}
\end{lemma}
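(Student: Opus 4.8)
The plan is to first discard the exact energy on the right-hand side. By Theorem~\ref{Thm:Implication} the pair $(t_*^{\mathrm{ext}},t_*^{\mathrm{CAS}})$ reproduces an eigenvalue, so $\mathcal{E}(t_*^{\mathrm{ext}};t_*^{\mathrm{CAS}})=E$ with $H\psi_*=E\psi_*$ and $\psi_*=e^{T_*^{\mathrm{ext}}}e^{T_*^{\mathrm{CAS}}}\phi_0$. Because all excitation operators commute, I would fuse the two exponentials in $\mathcal{E}(t_*;t_{\mathrm{FCI}}^{\mathrm{CAS}})$ into a single cluster operator $S_1=T_*+T_{\mathrm{FCI}}^{\mathrm{CAS}}$; since $S_1^\dagger\phi_0=0$, this collapses the functional to its projected form $\mathcal{E}(t_*;t_{\mathrm{FCI}}^{\mathrm{CAS}})=\langle\phi_0,e^{-S_1}He^{S_1}\phi_0\rangle=\langle H\phi_0,\psi_{s_1}\rangle$, with $\psi_{s_1}=e^{S_1}\phi_0$. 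Subtracting $E=\langle H\phi_0,\psi_*\rangle$ and using intermediate normalization, $\langle\phi_0,\psi_{s_1}-\psi_*\rangle=0$, yields the compact identity $\delta\varepsilon_{\mathrm{CAS}}^*=|\langle(H-E)\phi_0,\psi_{s_1}-\psi_*\rangle|$, in which only the excited block of $(H-E)\phi_0$ contributes. As $\phi_0$ comes from a Hartree--Fock calculation, Brillouin's condition $\langle\phi_\nu,H\phi_0\rangle=0$ for singly excited $\phi_\nu$ removes the singles, leaving a pairing of the two-body part of $H\phi_0$ against $\psi_{s_1}-\psi_*$.

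The quadratic gain is then extracted through the variational principle. I would pass from the projected functional to the Rayleigh quotient $\mathcal{R}_{V_C}(\psi_{s_1})$: since $(H-E)\psi_*=0$ and $\psi_*$ minimizes \eqref{eq:Rayleigh-Ritz variational principle}, the quotient error equals the manifestly quadratic form $\mathcal{R}_{V_C}(\psi_{s_1})-E=\langle\psi_{s_1}-\psi_*,(H-E)(\psi_{s_1}-\psi_*)\rangle/\langle\psi_{s_1},\psi_{s_1}\rangle$, which the $H^1$-boundedness of $H$ controls by $\|\psi_{s_1}-\psi_*\|_{H^1}^2$. It then remains to convert this wavefunction error into the stated amplitude norms. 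Here the decisive observation is that an external excitation carries $\mathcal{H}_{\mathrm{CAS}}$ into $\mathcal{H}_{\mathrm{ext}}$, so that $P\psi_{s_1}=e^{T_{\mathrm{FCI}}^{\mathrm{CAS}}}\phi_0=\psi_{\mathrm{CAS}}^{(\mathrm{FCI})}$ and $P\psi_*=e^{T_*^{\mathrm{CAS}}}\phi_0$, whence $\psi_{s_1}-\psi_*$ splits into a CAS contribution governed by $t_{\mathrm{FCI}}^{\mathrm{CAS}}-t_*^{\mathrm{CAS}}$ and an external contribution governed by $t_*-t_*^{\mathrm{ext}}$.

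For the external part I would invoke Proposition~\ref{prop:Focknorm_ExtNorm} and Theorem~\ref{th:OperatorNormEquiv} to obtain $\|(T_*-T_*^{\mathrm{ext}})\phi_0\|_{H^1}\sim\|t_*-t_*^{\mathrm{ext}}\|_{\mathcal{V}_{\mathrm{ext}}}$; for the CAS part, where $\varepsilon_\sigma$ need not be positive and the Fock norm is unavailable, I would use the plain $H^1$-boundedness of the finitely many CAS excitation operators to bound the contribution by $\|t_{\mathrm{FCI}}^{\mathrm{CAS}}-t_*^{\mathrm{CAS}}\|_2$, the higher powers of $S_1-S_2$ contributing only at higher order. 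Together these give $\|\psi_{s_1}-\psi_*\|_{H^1}^2\lesssim\|t_*-t_*^{\mathrm{ext}}\|_{\mathcal{V}_{\mathrm{ext}}}^2+\|t_{\mathrm{FCI}}^{\mathrm{CAS}}-t_*^{\mathrm{CAS}}\|_2^2$, matching the claimed right-hand side.

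The hard part is the discrepancy between the projected energy $\langle H\phi_0,\psi_{s_1}\rangle$ and the Rayleigh quotient: their difference is a residual pairing $\langle\psi_{s_1}-\phi_0,(H-\mathcal{E})\psi_{s_1}\rangle/\langle\psi_{s_1},\psi_{s_1}\rangle$ with $\mathcal{E}=\mathcal{E}(t_*;t_{\mathrm{FCI}}^{\mathrm{CAS}})$, which tests the $O(1)$ correlated part $\psi_{s_1}-\phi_0$ against the residual and is therefore \emph{a priori} only first order in the error. Turning it into a genuinely quadratic quantity is the crux and cannot be achieved by bounding the two factors separately; it requires exploiting the Galerkin orthogonality satisfied by the external projected equations together with the CAS eigenrelation $PHP\,\psi_{\mathrm{CAS}}^{(\mathrm{FCI})}=E_{\mathrm{CAS}}^{(\mathrm{FCI})}\psi_{\mathrm{CAS}}^{(\mathrm{FCI})}$, so that the residual is effectively probed only in directions orthogonal to $\psi_{s_1}-\phi_0$ and the leading-order contributions cancel. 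Verifying this cancellation, rather than the routine norm estimates, is where the real work lies, and it is precisely here that Assumption~(A), Brillouin's theorem, and the variational character of the CAS solver must be combined.
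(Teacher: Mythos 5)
Your opening reduction is sound: using Theorem~\ref{Thm:Implication}, intermediate normalization, and $S_1^\dagger\phi_0=0$ one indeed gets $\delta\varepsilon_{\mathrm{CAS}}^*=|\langle (H-E)\phi_0,\psi_{s_1}-\psi_*\rangle|$, the projection identities $P\psi_{s_1}=e^{T_{\mathrm{FCI}}^{\mathrm{CAS}}}\phi_0$, $P\psi_*=e^{T_*^{\mathrm{CAS}}}\phi_0$ are correct, and the conversion of wavefunction errors into $\Vert t_*-t_*^{\mathrm{ext}}\Vert_{\mathcal{V}_{\mathrm{ext}}}$ and $\Vert t_{\mathrm{FCI}}^{\mathrm{CAS}}-t_*^{\mathrm{CAS}}\Vert_2$ via Proposition~\ref{prop:Focknorm_ExtNorm}, Theorem~\ref{th:OperatorNormEquiv} and the boundedness of the finitely many CAS excitation operators is exactly the kind of estimate the paper also performs. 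But the proposal does not prove the lemma, and the missing step is not a technicality you may defer: it is the lemma. Your plan is to write the projected-energy error as [Rayleigh-quotient error] $+$ [discrepancy between projected energy and Rayleigh quotient], bound the first term quadratically (which is standard, since $\mathcal{R}_{V_C}(\psi_{s_1})-E=\langle\psi_{s_1}-\psi_*,(H-E)(\psi_{s_1}-\psi_*)\rangle/\Vert\psi_{s_1}\Vert_{L^2}^2$), and then control the discrepancy. The circularity is that the discrepancy equals $\bigl[\mathcal{E}(t_*;t_{\mathrm{FCI}}^{\mathrm{CAS}})-E\bigr]-\bigl[\mathcal{R}_{V_C}(\psi_{s_1})-E\bigr]$, i.e., it differs from the target quantity $\delta\varepsilon_{\mathrm{CAS}}^*$ precisely by the already-quadratic Rayleigh error. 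Hence ``bounding the discrepancy quadratically'' is a restatement of the lemma, not a route to it, and your closing paragraph concedes that the cancellation which would close this loop (attributed vaguely to Galerkin orthogonality, Brillouin's theorem, and the variational character of the CAS solver) is never exhibited. As it stands the argument yields only a first-order bound.

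The paper obtains the quadratic gain from a different and much more direct mechanism, which is exactly the cancellation you were looking for: it works entirely in amplitude space and uses that $\psi_*=e^{T_*^{\mathrm{ext}}}e^{T_*^{\mathrm{CAS}}}\phi_0$ is the exact eigenfunction on $\mathcal{H}_K$ to assert stationarity of the energy functional there, $D\mathcal{E}(t_*^{\mathrm{ext}};t_*^{\mathrm{CAS}})=0$. Taylor expanding $\mathcal{E}(t_*;t_{\mathrm{FCI}}^{\mathrm{CAS}})$ around $(t_*^{\mathrm{ext}};t_*^{\mathrm{CAS}})$ then leaves no first-order term, only
\begin{equation*}
\tfrac{1}{2}D^2\mathcal{E}(t_*^{\mathrm{ext}};t_*^{\mathrm{CAS}})((e,\tilde e),(e,\tilde e))+\mathcal{R}^{(3)},\qquad e=t_{\mathrm{FCI}}^{\mathrm{CAS}}-t_*^{\mathrm{CAS}},\quad \tilde e=t_*-t_*^{\mathrm{ext}},
\end{equation*}
with Hessian entries $\langle\phi_0,H_*X_\nu X_\mu\phi_0\rangle$, which are bounded blockwise: the external block by Theorem~\ref{th:OperatorNormEquiv} and boundedness of $H$, the CAS block in the plain $l^2$-norm. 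Neither Brillouin's theorem, nor the CAS eigenrelation $PHP\,\psi_{\mathrm{CAS}}^{(\mathrm{FCI})}=E_{\mathrm{CAS}}^{(\mathrm{FCI})}\psi_{\mathrm{CAS}}^{(\mathrm{FCI})}$, nor any Galerkin orthogonality of the external equations enters; the entire burden of quadraticity is carried by the vanishing of $D\mathcal{E}$ at the exact full-space amplitudes. Since your proposal neither identifies nor verifies this (or any other) cancellation, it has a genuine gap at exactly the point where the proof's content lies.
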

\begin{proof}
	Recall that $\psi_*=e^{T_*^{\mathrm{ext}}} e^{T_*^{\mathrm{CAS}}} \phi_0$ corresponds to the FCI solution on $\mathcal H_K$ and consequently $D\mathcal E(t_*^{\mathrm{ext}};t_*^{\mathrm{CAS}})=0 $. Taylor expanding $\mathcal E( t_*;t_{\mathrm{FCI}}^{\mathrm{CAS}})$ around $(t_*^{\mathrm{CAS}} ,t_*^{\mathrm{ext}})$ yields
	\begin{align*}
	\mathcal E(t_*;t_{\mathrm{FCI}}^{\mathrm{CAS}}) - \mathcal{E}(t_*^{\mathrm{ext}};t_*^{\mathrm{CAS}}  )  
	=  \frac{1}{2}   D^2  \mathcal E(t_*^{\mathrm{ext}};t_*^{\mathrm{CAS}})((e,\tilde{e}),(e,\tilde{e})) + \mathcal{R}^{(3)} ~,
	\end{align*}
	where $\tilde{e} =  t_* - t_*^{\mathrm{ext}}$, $e = t_{\mathrm{FCI}}^{\mathrm{CAS}}-t_*^{\mathrm{CAS}}$ and $\mathcal{R}^{(3)}$ describes the third order error term.
	For $H_{t_1+t_2}= e^{-T_1} e^{-T_2} H e^{T_2} e^{T_1}$ with amplitudes $t_1\in \mathcal{V}_{\mathrm{ext}}$ and $t_2\in \mathcal{V}_{\mathrm{CAS}}$ we compute 
	\begin{align*}
	(D^2\mathcal{E}(t_1;t_2))_{\mu,\nu} 
	= 
	\langle\phi_0, [[H_{t_1+t_2},X_{\nu}],X_{\mu}]\phi_0\rangle
	=
	\langle\phi_0, H_{t_1+t_2}X_{\nu}X_{\mu}\phi_0\rangle~.
	\end{align*}
	Thus, with $H_* = H_{t_*^{\mathrm{ext}} + t_*^{\mathrm{CAS}}}$ and 
	\[\delta_{ \tilde T} = \sum_{\mu \in \mathcal J_{\mathrm{ext}}}  ( t_* - t_*^{\mathrm{ext}})_\mu X_\mu~, \qquad  
	\delta_T = \sum_{\mu \in \mathcal J_{\mathrm{CAS}}}  ( t_{\mathrm{FCI}}^{\mathrm{CAS}}-t_*^{\mathrm{CAS}})_\mu X_\mu \]
	we have 
	\begin{align*}
	D^2  \mathcal E(t_*^{\mathrm{ext}};t_*^{\mathrm{CAS}})((e,\tilde{e}),(e,\tilde{e})) 
	&= 
	\langle \phi_0, H_* (\delta_{ \tilde T} + \delta_T)^2 \phi_0\rangle \\ 
	&\leq 2 \langle \phi_0, H_* \delta_{ \tilde T}^2 \phi_0\rangle + 2\langle \phi_0, H_* (\delta_T)^2 \phi_0\rangle~.
	\label{Taylor3}
	\end{align*}
	Using Theorem \ref{th:OperatorNormEquiv}, as well as the boundedness of $H$, we obtain
	\begin{align*}
	\langle \phi_0, H_* \delta_{ \tilde{T}}^2 \phi_0\rangle 
	\leq
	C \Vert \phi_0 \Vert_{H^1}^2 \Vert \delta_{\tilde{T}} \Vert_{\mathcal{B}(H^1)}^2
	\leq 
	C \Vert  t_* - t_*^{\rm ext} \Vert_{\mathcal{V}_{\rm ext} }^2~.
	\end{align*}
	By direct computation, we bound the term $\langle \phi_0, H_* (\delta_T)^2 \phi_0\rangle$ using the $l^2(\mathcal J_{\mathrm{CAS}})$ norm
	\begin{align*}
	\langle \phi_0, H_* (\delta_T)^2 \phi_0\rangle
	&\leq C \Vert \delta_T \Vert_{\mathcal B(H^1)}^2
	=C \Vert \sum_{\mu \in \mathcal J_{\mathrm{CAS}}}  ( t_{\mathrm{FCI}}^{\mathrm{CAS}}-t_*^{\mathrm{CAS}})_\mu X_\mu \Vert_{\mathcal B(H^1)}^2 \\
	&\leq C  \sum_{\mu \in \mathcal J_{\mathrm{CAS}}}  ( t_{\mathrm{FCI}}^{\mathrm{CAS}}-t_*^{\mathrm{CAS}})_\mu^2 \Vert X_\mu \Vert_{\mathcal B(H^1)}^2 
	\leq C \Vert t_{\mathrm{FCI}}^{\mathrm{CAS}}-t_*^{\mathrm{CAS}} \Vert_{2}^2~.
	\end{align*}
\end{proof}

Next, we analyze the energy difference $\delta \varepsilon_{\mathrm{CAS}} = | \mathcal E(t_*;t^{\mathrm{CAS}}) - \mathcal{E}(t_*;t_{\mathrm{FCI}}^{\mathrm{CAS}}  )|$.
\begin{lemma}
	\label{lemma:deltaEcas}
	Under the assumptions of Theorem \ref{thm:err_main} the following bound holds
	\begin{align*}
	\delta \varepsilon_{\mathrm{CAS}} \lesssim  \delta E_{\mathrm{CAS}} + \Vert t_* - \tilde t_*\Vert_{\mathcal V_{\mathrm{ext}}}^2 + \Vert ( T^{\mathrm{CAS}} - T_{\mathrm{FCI}}^{\mathrm{CAS}})\phi_0\Vert_{H^1}^2 
	+ \sum_{|\mu|=1} \varepsilon_\mu (\tilde t_*)_\mu^2~.
	\end{align*}
\end{lemma}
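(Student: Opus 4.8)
The plan is to reduce both $\delta\varepsilon_{\mathrm{CAS}}$ and $\delta E_{\mathrm{CAS}}$ to directly comparable expressions and then show their difference is controlled by squared quantities. First I would exploit that $\langle \phi_0, e^{-S}Z\rangle = \langle \phi_0, Z\rangle$ whenever $S$ is an excitation operator, since $(e^{-S})^\dagger$ is a de-excitation annihilating $\phi_0$. Applying this to both the external operator $T_*$ and the CAS operators, the energy functional collapses to $\mathcal E(t_*;s) = \langle \phi_0, H e^{T_*}e^{S}\phi_0\rangle =: g(s)$, and the same manipulation turns $\delta E_{\mathrm{CAS}}$ into $|\langle \phi_0, H\,\delta\Phi_{\mathrm{CAS}}\rangle| = |h(t^{\mathrm{CAS}}) - h(t_{\mathrm{FCI}}^{\mathrm{CAS}})|$, where $h(s) = \langle\phi_0, H e^{S}\phi_0\rangle$ and $\delta\Phi_{\mathrm{CAS}} = (e^{T^{\mathrm{CAS}}} - e^{T_{\mathrm{FCI}}^{\mathrm{CAS}}})\phi_0 \in \mathcal H_{\mathrm{CAS}}$ (here the full $H$ may replace $PHP$ because $\phi_0$ and $e^{S}\phi_0$ lie in $\mathcal H_{\mathrm{CAS}}$). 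Thus $\delta\varepsilon_{\mathrm{CAS}} = |g(t^{\mathrm{CAS}}) - g(t_{\mathrm{FCI}}^{\mathrm{CAS}})|$, and the triangle inequality gives $\delta\varepsilon_{\mathrm{CAS}} \le \delta E_{\mathrm{CAS}} + |\mathcal D|$ with the residual cross term $\mathcal D = \langle\phi_0, H(e^{T_*}-I)\delta\Phi_{\mathrm{CAS}}\rangle$.

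The heart of the proof is bounding $\mathcal D$. Since $(e^{T_*}-I)\delta\Phi_{\mathrm{CAS}}$ is $L^2$-orthogonal to $\phi_0$ (because $e^{T_*^\dagger}\phi_0 = \phi_0$) and $F\phi_0 = \Lambda_0\phi_0$ with $F$ self-adjoint, the Fock part drops and $\mathcal D = \langle W\phi_0, (e^{T_*}-I)\delta\Phi_{\mathrm{CAS}}\rangle$. I would then run a rank-counting argument: $W$ is two-body, so $W\phi_0$ has excitation rank at most two, whereas $(e^{T_*}-I)\delta\Phi_{\mathrm{CAS}}$ is a purely external vector of rank at least two (an external excitation from $T_*$ composed with a CAS excitation from $\delta\Phi_{\mathrm{CAS}}$, each of rank $\ge 1$). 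Consequently only the rank-two block of $W\phi_0$ on external determinants survives, only the linear term $T_*\delta\Phi_{\mathrm{CAS}}$ contributes (powers $T_*^k$, $k\ge2$, produce rank $\ge 3$), and within it only the product of the single-excitation part of $T_*$ with the single-excitation part of $\delta\Phi_{\mathrm{CAS}}$ lands in rank exactly two. Hence $\mathcal D$ is a bounded bilinear form evaluated on the external singles $(t_*)_\mu$, $|\mu|=1$, and the CAS singles $c_\beta = \langle\phi_\beta,\delta\Phi_{\mathrm{CAS}}\rangle$, $|\beta|=1$, $\beta\in\mathcal J_{\mathrm{CAS}}$.

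From here the estimate is routine. Cauchy--Schwarz with boundedness of $W$ gives $|\mathcal D| \lesssim \big(\sum_{|\mu|=1}(t_*)_\mu^2\big)^{1/2}\big(\sum_{|\beta|=1,\,\beta\in\mathcal J_{\mathrm{CAS}}}c_\beta^2\big)^{1/2}$, and Young's inequality splits this into a sum of the two squares. For the external singles I would insert the gap weight via $\varepsilon_\mu \ge \varepsilon_0$ and the triangle inequality $(t_*)_\mu^2 \lesssim (\tilde t_*)_\mu^2 + (t_*-\tilde t_*)_\mu^2$, obtaining $\sum_{|\mu|=1}(t_*)_\mu^2 \lesssim \sum_{|\mu|=1}\varepsilon_\mu(\tilde t_*)_\mu^2 + \Vert t_*-\tilde t_*\Vert_{\mathcal V_{\mathrm{ext}}}^2$. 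For the CAS singles I would use that the rank-one part of $e^{T}\phi_0$ equals the singles of $T$, so $c_\beta = (t^{\mathrm{CAS}}-t_{\mathrm{FCI}}^{\mathrm{CAS}})_\beta$ for $|\beta|=1$, and then, by $L^2$-orthonormality of the Slater determinants and $\Vert\cdot\Vert_{L^2}\le\Vert\cdot\Vert_{H^1}$, $\sum_{|\beta|=1}c_\beta^2 \le \Vert t^{\mathrm{CAS}}-t_{\mathrm{FCI}}^{\mathrm{CAS}}\Vert_{2}^2 = \Vert(T^{\mathrm{CAS}}-T_{\mathrm{FCI}}^{\mathrm{CAS}})\phi_0\Vert_{L^2}^2 \le \Vert(T^{\mathrm{CAS}}-T_{\mathrm{FCI}}^{\mathrm{CAS}})\phi_0\Vert_{H^1}^2$. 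Combining with $\delta E_{\mathrm{CAS}}$ reproduces the asserted bound.

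The main obstacle is exactly the step in the second paragraph: a priori $\mathcal D$ is only first order in the CAS error $\delta\Phi_{\mathrm{CAS}}$ while carrying the possibly $O(1)$ factor $T_*$, which would ruin the quadratic character of the estimate. It is the rank-counting selection rules (two-body $W$, additivity of external and CAS excitation ranks, and the single-times-single restriction) that force $\mathcal D$ to factor through the external singles of $t_*$ and the CAS singles of $\delta\Phi_{\mathrm{CAS}}$ only; the subsequent Young splitting and the replacement of $t_*$ by $\tilde t_*$ then convert it into the squared quantities $\sum_{|\mu|=1}\varepsilon_\mu(\tilde t_*)_\mu^2$, $\Vert t_*-\tilde t_*\Vert_{\mathcal V_{\mathrm{ext}}}^2$, and $\Vert(T^{\mathrm{CAS}}-T_{\mathrm{FCI}}^{\mathrm{CAS}})\phi_0\Vert_{H^1}^2$. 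I would also verify that every discarded remainder (higher powers of $T_*$ and the nonlinear part of $e^{S}$) either vanishes by the same rank count or is of strictly higher order and hence absorbed within the $\lesssim$.
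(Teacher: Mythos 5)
Your proposal is correct and follows essentially the same route as the paper: the same decomposition $\delta\varepsilon_{\mathrm{CAS}} \le \delta E_{\mathrm{CAS}} + |\mathcal{D}|$ (your cross term $\mathcal{D}$ is exactly the paper's remainder $\mathcal{R}$, since $\mathcal{R} = |\langle\phi_0, H(e^{T_*}-I)(e^{T^{\mathrm{CAS}}}-e^{T^{\mathrm{CAS}}_{\mathrm{FCI}}})\phi_0\rangle|$), the same Slater--Condon/rank-counting reduction of that term to the product of external singles of $T_*$ with CAS singles of $T^{\mathrm{CAS}}-T^{\mathrm{CAS}}_{\mathrm{FCI}}$, and the same splitting of $t_*$ through $\tilde t_*$ followed by Young's inequality. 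The remaining differences are cosmetic: you remove the projectors via $P\phi_0=\phi_0$ and $Pe^{S}\phi_0 = e^{S}\phi_0$ and kill the Fock contribution by $L^2$-orthogonality to $\phi_0$, where the paper uses $[T^{\mathrm{CAS}},P]=0$ and the commutativity of $[F,T^{\mathrm{CAS}}-T^{\mathrm{CAS}}_{\mathrm{FCI}}]$ with $e^{T_*}$, and your plain $l^2$ Cauchy--Schwarz with reinsertion of the weights $\varepsilon_\mu \ge \varepsilon_0$ (valid since $K<\infty$) replaces the paper's $\mathcal{B}(H^1)$ operator-norm estimates.
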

\begin{proof}
Starting from the definition of $\delta \varepsilon_{\mathrm{CAS}}$, we obtain straightforwardly 
\begin{align*}
\delta \varepsilon_{\mathrm{CAS}}
\leq
|\langle
\phi_0, \big(e^{-T^{\mathrm{CAS}}}He^{T^{\mathrm{CAS}}}-e^{-T_{\mathrm{FCI}}^{\mathrm{CAS}}}He^{T_{\mathrm{FCI}}^{\mathrm{CAS}}}\big)\phi_0
\rangle|+\mathcal{R}~,
\end{align*}
where 
$
\mathcal{R} =|\langle\phi_0, \big[\big(e^{-T^{\mathrm{CAS}}}He^{T^{\mathrm{CAS}}}-e^{-T_{\mathrm{FCI}}^{\mathrm{CAS}}}He^{T_{\mathrm{FCI}}^{\mathrm{CAS}}}\big),e^{T_*}\big]\phi_0\rangle|$.
Since $\phi_0,\,e^{T_{\mathrm{FCI}}^{\mathrm{CAS}}}\phi_0$ and $e^{T^{\mathrm{CAS}}}\phi_0$ are elements of $\mathcal{H}_\mathrm{CAS}$, we find
\begin{align*}
\delta \varepsilon_{\mathrm{CAS}}-\mathcal{R}
&\leq
|\langle
\phi_0, \big(e^{-T^{\mathrm{CAS}}}He^{T^{\mathrm{CAS}}}-e^{-T_{\mathrm{FCI}}^{\mathrm{CAS}}}He^{T_{\mathrm{FCI}}^{\mathrm{CAS}}}\big)\phi_0
\rangle|\\
&\leq
|\langle
\phi_0, \big(e^{-T^{\mathrm{CAS}}}PHPe^{T^{\mathrm{CAS}}}-e^{-T_{\mathrm{FCI}}^{\mathrm{CAS}}}PHPe^{T_{\mathrm{FCI}}^{\mathrm{CAS}}}\big)\phi_0
\rangle|\\
&\quad+
|\langle
\phi_0, \big(\big[T^{\mathrm{CAS}},P\big]HPe^{T^{\mathrm{CAS}}}-\big[T_{\mathrm{FCI}}^{\mathrm{CAS}},P\big]HPe^{T_{\mathrm{FCI}}^{\mathrm{CAS}}}\big)\phi_0
\rangle|~.
\end{align*}
For any excitation operator $X=\sum_{\mu\in\mathcal{J}_\mathrm{CAS}}c_{\mu}X_{\mu}$, we remark that $XP\psi\in \mathcal{H}_\mathrm{CAS}$ for all $\psi\in \mathcal{H}_K$.
By definition of $\mathcal{H}_\mathrm{CAS}$ we also find $XQ\psi\in \mathcal{H}_\mathrm{ext}$ for all $\psi\in \mathcal{H}_K$, where $Q=I-P$.
Therefore $X=(P+Q)X(P+Q)=PXP+QXQ$ and consequently $[X,P]=[PXP,P]=0$.
Hence, $\big[T^{\mathrm{CAS}},P\big]=\big[T_{\mathrm{FCI}}^{\mathrm{CAS}},P\big]=0$.
In particular,
\begin{align*}
\delta \varepsilon_{\mathrm{CAS}}
&\leq
|\langle
\phi_0, \big(e^{-T^{\mathrm{CAS}}}PHPe^{T^{\mathrm{CAS}}}-e^{-T_{\mathrm{FCI}}^{\mathrm{CAS}}}PHPe^{T_{\mathrm{FCI}}^{\mathrm{CAS}}}\big)\phi_0
\rangle|
+
\mathcal{R}=
\delta E_{\mathrm{CAS}}+\mathcal{R}~,
\end{align*}
where $\delta E_{\mathrm{CAS}}$ is defined by Eq.~\eqref{eq:Ecas}. 
To estimate $\mathcal R$ we consider the splitting of the Hamilton operator $H=F+W$.
Note that $[T^{\mathrm{CAS}},T_*]=[T_{\mathrm{FCI}}^{\mathrm{CAS}},T_*]=0$ which implies together with Lemma \ref{Lemma:FockOperator} that the $F$-dependent terms in $\mathcal R$ vanish. 
The Baker--Campbell--Hausdorff expansion and the fact that $((T_*)^m)^{\dagger}\phi_0 = 0$ for all $m\geq 1$ then yields
\begin{align*}
\mathcal{R}
=
|\langle
\phi_0, \Big(\sum_{m=1}\frac{1}{m!}[W,e^{T^{\mathrm{CAS}}}]_m-\sum_{m=1}\frac{1}{m!}[W,e^{T_{\mathrm{FCI}}^{\mathrm{CAS}}}]_m\Big)\sum_{m=1}\frac{1}{m!}(T_*)^m\phi_0
\rangle|~.
\end{align*}
Since $W$ is a two-particle operator, the Slater--Condon rules imply that the non-zero contributions in the above expansion are given for $m=1$ and only by the single-excitation parts of the respective operators.
It then follows with $((T^{\mathrm{CAS}})_1)^{\dagger}\phi_0 =((T_{\mathrm{FCI}}^{\mathrm{CAS}})_1)^{\dagger}\phi_0 = 0$ that
\begin{align*}
\mathcal R 
&= |\langle \phi_0, W (T^{\mathrm{CAS}} - T_{\mathrm{FCI}}^{\mathrm{CAS}})_1(T_*)_1\phi_0\rangle|~,
\end{align*}
where $(\cdot)_1$ denotes the single-excitation part of the respective operator. We then estimate  
\begin{align*}
\mathcal R 
&\leq |\langle \phi_0,	W (T^{\mathrm{CAS}} - T_{\mathrm{FCI}}^{\mathrm{CAS}}  )_1 (T_* - \tilde T_*)_1  \phi_0 \rangle| +|\langle \phi_0,	W (T^{\mathrm{CAS}} - T_{\mathrm{FCI}}^{\mathrm{CAS}}  )_1 (\tilde T_* )_1  \phi_0 \rangle| \\
&\leq \left(C_1 \Vert T_* - \tilde T_*\Vert_{\mathcal{B}(H^1)}
+ C_2\Vert (\tilde T_* )_1\Vert_{\mathcal{B}(H^1)}\right) \Vert (T^{\mathrm{CAS}} - T_{\mathrm{FCI}}^{\mathrm{CAS}}  )\phi_0\Vert_{H^1} \\
&\leq \frac{C_1} 2 \Vert T_* - \tilde T_*\Vert_{\mathcal{B}(H^1)}^2  + \frac{C_2} 2 \Vert (\tilde{T}_*)_1\Vert_{\mathcal{B}(H^1)}^2  + \frac{C_1+C_2} 2   \Vert (T^{\mathrm{CAS}} - T_{\mathrm{FCI}}^{\mathrm{CAS}}  )\phi_0\Vert_{H^1}^2~ .
\end{align*}
Hence, $
\mathcal R \leq D_1 \Vert t_* - \tilde t_*\Vert_{\mathcal V_{\mathrm{ext}}}^2 +D_2 \Vert ( T^{\mathrm{CAS}} - T_{\mathrm{FCI}}^{\mathrm{CAS}})\phi_0\Vert_{H^1}^2 
+ D_3\sum_{|\mu|=1} \varepsilon_\mu (\tilde t_*)_\mu^2$.
\end{proof}

For the remaining error $\delta \varepsilon$ we use techniques that have been developed by Bangerth and Rannacher for a general functional analytic framework \cite{bangerth2013adaptive}.  
Hence, under the assumption that $f$ is locally strongly monotone the following analysis holds also in the $K\to \infty$ limit.
Nevertheless, before passing on to the error estimate of $\delta \varepsilon$ we characterize the approximation space $\mathcal{V}_{\mathrm{ext}}^{(d)}$.
Let $\{b_1,\dots,b_D \}$ be a basis of $\mathcal{V}_{\mathrm{ext}}$, and without loss of generality, $\{b_1,\dots,b_d \}$ be the corresponding subbasis of $\mathcal{V}_{\mathrm{ext}}^{(d)}$ with $d<D$.
A key aspect for the analysis is $\mathcal{V}_{\mathrm{ext}}^{(d)}$ being a sufficiently good approximation of $\mathcal{V}_{\mathrm{ext}}$.
Subsequently, we elaborate a sufficient condition for this to hold.
Let $\delta>0$ be chosen according to Assumption~(B) such that Theorem \ref{Th:LipschitzCont} and \ref{Th:StronglyMon} imply $f$ being strongly monotone and Lipschitz continuous on $B_\delta(t_*)$ with constants $\gamma$ and $L$. 
Further, we define 
\[
\kappa_d = d(t_*,\mathcal{V}_{\mathrm{ext}}^{(d)})= \min_{t_d\in \mathcal{V}_{\mathrm{ext}}^{(d)}} \Vert t_d-t_*\Vert_{\mathcal{V}_{\mathrm{ext}}}.
\]
Eq.~\eqref{eq:SuffCond} in Theorem~\ref{thm:err_main} yields the assumption $\kappa_d \leq \gamma \delta/(\gamma+L)$. 
Then, the truncated cluster equation $f|_{\mathcal{V}_{\mathrm{ext}}^{(d)}}=0$ has a locally unique solution on $\mathcal V_{\mathrm{ext}}^{(d)} \cap B_\delta(t_*)$. 
We adapt the proof of Theorem 4.1 in \cite{rohwedder2013error}, which rests on the following consequence of Brouwer's fixed point theorem \cite{emmrich2013gewohnliche}:

\begin{theorem}[Brouwer, 1965]
\label{th:Brouwer}
    Equip $\mathbb R^d$ with any norm $\|\cdot\|_d$.
	Let $B_R$ be the closed ball of radius $R$ centered at $x=0$ and $h : B_{R} \to \mathbb R^d$ be continuous.
	If $\langle h( x), x\rangle \geq 0$ on $\partial B_R$ then $h( x)=0$ for some $ x\in B_R$.
\end{theorem}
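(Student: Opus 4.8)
The plan is to argue by contradiction and reduce the claim to the classical Brouwer fixed point theorem, which applies to $B_R$ because a closed norm ball in $\mathbb{R}^d$ is convex and compact, hence homeomorphic to the standard Euclidean ball. First I would assume, contrary to the claim, that $h(x)\neq 0$ for every $x\in B_R$. Under this assumption $\|h(x)\|_d>0$ varies continuously, so the map
\begin{equation*}
g:B_R\to B_R,\qquad g(x)=-\frac{R}{\|h(x)\|_d}\,h(x),
\end{equation*}
is well defined and continuous, and by construction $\|g(x)\|_d=R$, so $g$ sends $B_R$ into $\partial B_R\subseteq B_R$.

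Next I would invoke Brouwer to produce a fixed point $x_*=g(x_*)$. Since $\|g(x)\|_d=R$ for all $x$, this fixed point lies on $\partial B_R$, so in particular $x_*\neq 0$. Rearranging the fixed-point identity gives $h(x_*)=-\frac{\|h(x_*)\|_d}{R}\,x_*$, and pairing with $x_*$ in the Euclidean inner product yields
\begin{equation*}
\langle h(x_*),x_*\rangle=-\frac{\|h(x_*)\|_d}{R}\,\langle x_*,x_*\rangle<0,
\end{equation*}
since $\langle x_*,x_*\rangle>0$ for $x_*\neq 0$ and $\|h(x_*)\|_d>0$. This contradicts the hypothesis $\langle h(x),x\rangle\geq 0$ on $\partial B_R$, forcing $h$ to vanish at some point of $B_R$.

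The routine parts are the continuity and the self-map property of $g$, both immediate once $h$ is nowhere zero. The one step that deserves care is applying Brouwer to a ball defined by an arbitrary norm rather than the Euclidean one; I expect this to be the main, albeit mild, obstacle, resolved by noting that only convexity and compactness are required by the fixed point theorem. It is worth stressing that the concluding sign computation uses the positive definiteness $\langle x_*,x_*\rangle>0$ of the Euclidean inner product, which holds regardless of the norm $\|\cdot\|_d$ defining the ball; hence the mismatch between that norm and the inner product appearing in the hypothesis causes no difficulty.
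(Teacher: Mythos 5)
Your proof is correct. There is actually no in-paper argument to compare against: the paper states this result as a known consequence of Brouwer's fixed point theorem, citing a textbook \cite{emmrich2013gewohnliche}, and uses it as a black box. Your argument is the standard reduction and is complete: assuming $h$ is nowhere zero on $B_R$, the map $g(x)=-R\,h(x)/\Vert h(x)\Vert_d$ is a continuous self-map of the compact convex set $B_R$, Brouwer gives a fixed point $x_*$, which necessarily lies on $\partial B_R$ since $\Vert g\Vert_d\equiv R$, and the identity $h(x_*)=-(\Vert h(x_*)\Vert_d/R)\,x_*$ forces $\langle h(x_*),x_*\rangle<0$, contradicting the boundary hypothesis. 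The two points requiring care are both handled properly: Brouwer's theorem applies to the closed ball of an arbitrary norm because that ball is compact and convex (equivalently, homeomorphic to the Euclidean ball), and the concluding sign computation uses only the positive definiteness of the Euclidean pairing, so the mismatch between the pairing in the hypothesis and the norm defining the ball is harmless. That last observation is genuinely relevant to how the theorem is invoked in the paper, where $\Vert x\Vert_d$ is taken to be the $\mathcal{V}_{\mathrm{ext}}^{(d)}$-norm of the amplitude vector $v=\sum_j x_j b_j$ while $\langle h_d(x),x\rangle$ is the coordinate (Euclidean) pairing.
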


Let $t_{\mathrm{opt}}\in \mathcal{V}_{\mathrm{ext}}^{(d)}$ with $\kappa_d= \Vert t_{\mathrm{opt}}-t_*\Vert_{\mathcal{V}_{\mathrm{ext}}}$, we define the continuous function $h_d:\mathbb R^{d}\to \mathbb R^{d};\,x\mapsto (y_j)_{j=1}^d$, where $y_j = \langle f(t_{\mathrm{opt}} + v),b_j\rangle $ and $v=\sum_{j=1}^d x_j b_j$.
We chose $\Vert x \Vert_{d} = \Vert v \Vert_{\mathcal V_{\mathrm{ext}}^{(d)} }$ as a norm on $\mathbb R^{d}$.
Then, $h_d(t)=0$ if and only if $f(t)|_{\mathcal V_{\mathrm{ext}}^{(d)} } =0$.
By assumption $ \delta -\kappa_d \geq \delta L/(\gamma +L)>0$ and we set $R= \delta-\kappa_d$. Then $v \in B_R(t_{\mathrm{opt}})$ implies $v\in B_\delta(t_*)$. 
Assuming further $\Vert  x\Vert_{d}=R $, the monotonicity and Lipschitz continuity of $ f$ then yield
\begin{align*}
\langle h_d( x), x\rangle
&=
\sum_{j=1}^{d}\langle f(t_{\mathrm{opt}} + v),b_j\rangle x_j
=
\langle f(t_{\mathrm{opt}} + v)-f(t_{\mathrm{opt}}),v\rangle+\langle f(t_{\mathrm{opt}})-f(t_*),v\rangle \\
&\geq \gamma \Vert v \Vert^2_{\mathcal V_{\mathrm{ext}}^{(d)}}+L\kappa_d\Vert v \Vert_{\mathcal V_{\mathrm{ext}}^{(d)}}
=
R(\gamma R+L\kappa_d)~.
\end{align*}
Since $\gamma R - L\kappa_d = \gamma  \delta - \kappa_d (\gamma +L)\geq 0$, we conclude $\langle h_d( x), x\rangle = R(\gamma R - L\kappa_d)\geq 0$.
By Theorem~\ref{th:Brouwer} this yields  $h_d( x_*)=0$ for some $ x_*$ with $\Vert  x_* \Vert_{d}  \leq  R $, which is equivalent to $t_d = t_{\mathrm{opt}} + v_*$ solving the projected problem $f|_{\mathcal V_{\mathrm{ext}}^{(d)} }=0$. 
The uniqueness follows from Theorem \ref{Th:Zarantonello} applied to $f|_{\mathcal V_{\mathrm{ext}}^{(d)} }$.

In the sequel, we assume that  $\mathcal{V}_{\mathrm{ext}}^{(d)}$ is a sufficiently good approximation of $\mathcal{V}_{\mathrm{ext}}$ as guaranteed by Eq.~\eqref{eq:SuffCond}.
We note that the Lagrangian \eqref{eq:Langrangian} is nonsymmetric, consequently we cannot expect the error to be quadratic with respect to the error of the wavefunction.
However, we see that the dual variable $z$ enters in \eqref{eq:Langrangian}. 
Indeed, in the analysis that will follow, the solution $z_*$ of the dual problem enters the error estimates. 
In the spirit of \cite{rohwedder2013error}, we start the estimation of $\delta \varepsilon$ with a lemma that concerns the dual solution.
\begin{lemma}
\label{Lemma:QuasiOptimalDualSol}
Let $f$ be strongly monotone on $B_{\delta}(t_*)$, then there exists a unique dual solution $z_*\in\mathcal{V}_{\mathrm{ext}}$ determined by $t_*$ such that $(t_*,z_*)$ is a stationary point of the Lagrangian $\mathcal{L}(\cdot,\cdot)$, i.e., $(t_*,z_*)$ solves \eqref{eq:StatEL}.
Additionally, there exists a corresponding unique $z_d\in \mathcal{V}_{\mathrm{ext}}^{(d)}$ such that $(t_d,z_d)$ solves the discretized equation \eqref{eq:StatELDis} and approximates the exact dual solution quasi-optimally in the sense that 
\begin{equation}
\Vert z_d-z_*\Vert_{\mathcal{V}_{\mathrm{ext}}} \leq c_1 \Theta_d +c_2\Theta_d^2~,
\label{eq:QuasiOptDualElement}
\end{equation}
with $\Theta_d=\max\big\lbrace d(t_*,\mathcal{V}_{\mathrm{ext}}^{(d)})~,~d(z_*,\mathcal{V}_{\mathrm{ext}}^{(d)})\big\rbrace$.
\end{lemma}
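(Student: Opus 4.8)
The plan is to observe that the stationarity system \eqref{eq:StatEL} decouples: its $v$-component merely re-encodes the primal equation $f(t_*)=0$, while its $u$-component characterizes $z_*$ as the solution of the \emph{linear} adjoint equation
\[
\langle f'(t_*)u, z_*\rangle = \mathcal E'(t_*)u \qquad \text{for all } u\in\mathcal V_{\mathrm{ext}},
\]
and likewise \eqref{eq:StatELDis} characterizes $z_d$ through the Galerkin restriction $\langle f'(t_d)u_d, z_d\rangle = \mathcal E'(t_d)u_d$ for all $u_d\in\mathcal V_{\mathrm{ext}}^{(d)}$. The lemma thus reduces to a well-posedness statement for these two linear problems together with a Strang-type consistency estimate. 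Throughout I would use the primal quasi-optimal bound $\Vert t_d-t_*\Vert_{\mathcal V_{\mathrm{ext}}}\le (L/\gamma)\,d(t_*,\mathcal V_{\mathrm{ext}}^{(d)})\le (L/\gamma)\Theta_d$ supplied by Theorem~\ref{Th:Zarantonello}.

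First I would establish well-posedness of the continuous dual problem. Differentiating the strong monotonicity inequality of Theorem~\ref{Th:StronglyMon} at $t_*$ (insert $t_1=t_*+su$, $t_2=t_*$, divide by $s^2$ and let $s\to 0$) shows that the Jacobian is coercive, $\langle f'(t_*)u,u\rangle\ge\gamma\Vert u\Vert_{\mathcal V_{\mathrm{ext}}}^2$, while Theorem~\ref{Th:LipschitzCont} gives boundedness $\Vert f'(t_*)\Vert\le L$ and Lipschitz continuity of $t\mapsto f'(t)$; the functional $\mathcal E'(t_*)$ is bounded since $\mathcal E$ is smooth with Lipschitz derivatives. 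Hence the bilinear form $(z,u)\mapsto\langle f'(t_*)u,z\rangle$ is bounded and coercive, and Lax--Milgram yields a unique $z_*$ solving the adjoint equation. Since $t_d\in B_\delta(t_*)$, the same coercivity constant $\gamma$ applies to $f'(t_d)$, so the finite-dimensional adjoint problem on $\mathcal V_{\mathrm{ext}}^{(d)}$ is uniquely solvable as well, giving $z_d$ (existence and uniqueness of $t_d$ itself being already guaranteed by the Brouwer argument preceding the lemma).

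For the quasi-optimality \eqref{eq:QuasiOptDualElement} I would run a Strang argument. Let $w_d\in\mathcal V_{\mathrm{ext}}^{(d)}$ realize $\Vert z_*-w_d\Vert_{\mathcal V_{\mathrm{ext}}}=d(z_*,\mathcal V_{\mathrm{ext}}^{(d)})\le\Theta_d$, abbreviate $v:=z_d-w_d\in\mathcal V_{\mathrm{ext}}^{(d)}$, and subtract $\langle f'(t_d)v,w_d\rangle$ from the discrete equation tested at $u_d=v$. Inserting the continuous adjoint equation at $v$ and splitting off $w_d$ and $t_*$ yields
\[
\gamma\Vert v\Vert_{\mathcal V_{\mathrm{ext}}}^2\le\langle f'(t_d)v,v\rangle=\mathrm{I}+\mathrm{II}+\mathrm{III},
\]
with $\mathrm{I}=[\mathcal E'(t_d)-\mathcal E'(t_*)]v$, $\mathrm{II}=\langle f'(t_*)v,z_*-w_d\rangle$ and $\mathrm{III}=\langle(f'(t_*)-f'(t_d))v,w_d\rangle$. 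Then $|\mathrm{I}|\lesssim\Vert t_d-t_*\Vert\,\Vert v\Vert\lesssim\Theta_d\Vert v\Vert$ by Lipschitz continuity of $\mathcal E'$, $|\mathrm{II}|\le L\Vert v\Vert\,\Vert z_*-w_d\Vert\le L\Theta_d\Vert v\Vert$ by boundedness and best approximation, and $|\mathrm{III}|\lesssim\Vert t_*-t_d\Vert\,\Vert v\Vert\,\Vert w_d\Vert\lesssim\Theta_d(\Vert z_*\Vert+\Theta_d)\Vert v\Vert$ by Lipschitz continuity of $f'$ together with $\Vert w_d\Vert\le\Vert z_*\Vert+\Theta_d$. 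Dividing by $\Vert v\Vert$ and adding $\Vert z_*-w_d\Vert\le\Theta_d$ via the triangle inequality produces the claimed bound $\Vert z_d-z_*\Vert_{\mathcal V_{\mathrm{ext}}}\le c_1\Theta_d+c_2\Theta_d^2$.

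The main obstacle I anticipate is the bookkeeping in term $\mathrm{III}$: it is precisely the \emph{operator-consistency} error caused by linearizing $f$ at $t_d$ rather than at $t_*$, and it is the only term carrying two small factors --- $\Vert t_*-t_d\Vert\lesssim\Theta_d$ from the primal estimate and the $\Theta_d$ hidden in $\Vert w_d\Vert\le\Vert z_*\Vert+\Theta_d$ --- so it is exactly the source of the genuinely quadratic contribution $c_2\Theta_d^2$. A secondary point requiring care is verifying that the coercivity constant does not degrade as one moves from $t_*$ to $t_d$; this is where the hypothesis $t_d\in B_\delta(t_*)$ and the uniform strong monotonicity on $B_\delta(t_*)$ from Theorem~\ref{Th:StronglyMon} are essential.
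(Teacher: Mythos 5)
Your proposal is correct, and the first half (existence and uniqueness of $z_*$ and $z_d$) is essentially identical to the paper's: both extract coercivity of the Jacobian $f'(t_*)$ from the strong monotonicity of $f$ (you by differentiating the monotonicity inequality, the paper by Taylor-expanding $f$ and scaling --- equivalent arguments), combine it with the boundedness from Theorem~\ref{Th:LipschitzCont}, and invoke Lax--Milgram; the solvability of the discrete dual problem follows in both cases from the coercivity of $f'(t_d)$, valid because $t_d \in B_\delta(t_*)$. Where you genuinely diverge is the quasi-optimality estimate \eqref{eq:QuasiOptDualElement}. The paper splits $z_d - z_* = (z_d - \tilde z_d) + (\tilde z_d - z_*)$ through an \emph{auxiliary Galerkin solution} $\tilde z_d \in \mathcal{V}_{\mathrm{ext}}^{(d)}$ of the dual problem still linearized at $t_*$, bounds $\Vert \tilde z_d - z_*\Vert$ by C\'ea's lemma, and then treats $\Vert z_d - \tilde z_d\Vert$ as a perturbation error via coercivity of $f'(t_d)$ and the Lipschitz constants $L_{\mathcal{E}'}, L_{f'}$. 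You instead run a one-shot Strang-type argument: test the coercive form $\langle f'(t_d)v,v\rangle$ with $v = z_d - w_d$, where $w_d$ is the best approximation of $z_*$, and absorb the data-consistency error (your term $\mathrm{I}$), the approximation error (term $\mathrm{II}$), and the operator-consistency error (term $\mathrm{III}$) in a single inequality. Your decomposition checks out algebraically, and the quadratic contribution $c_2\Theta_d^2$ arises from the same source in both proofs --- the product of the primal error $\Vert t_d - t_*\Vert \lesssim \Theta_d$ with the $\Theta_d$ hidden in the norm of the discrete dual object ($\Vert \tilde z_d\Vert \leq \Vert z_*\Vert + C\,d(z_*,\mathcal{V}_{\mathrm{ext}}^{(d)})$ in the paper, $\Vert w_d\Vert \leq \Vert z_*\Vert + \Theta_d$ in yours). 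The trade-off is modularity versus directness: the paper's route lets C\'ea's lemma be cited as a black box and cleanly separates ``discretization at fixed $t_*$'' from ``linearization point moved to $t_d$,'' while yours avoids introducing the auxiliary problem altogether and is shorter, at the cost of slightly heavier bookkeeping inside one estimate; the constants obtained are of the same form.
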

\begin{proof}
	By definition $t_*$ solves the second component of \eqref{eq:StatEL}. 
	Therefore it remains to show the fist equation.
	To that end we use Lax--Milgram \cite{evans2010partial}, for which we need to establish boundedness and coercivity of $f'(t_*)^{\dag}$.
	First, we note that the boundedness of $f'(t_*)$ was shown in Theorem \ref{Th:LipschitzCont}.
	Secondly, we expand $f$ into a Taylor series at $t_*$, i.e., $f(t_*+w)-f(t_*)=f'(t_*)w+\mathcal{O}(\Vert w\Vert^2_{\mathcal{V}_{\mathrm{ext}}})$ with $w\in B_{\delta}(t_*)$.
	The strong monotonicity estimate then yields $\langle f'(t_*)w,w\rangle \geq \gamma \Vert w\Vert^2_{\mathcal{V}_{\mathrm{ext}}}-\mathcal{O}(\Vert w\Vert^3_{\mathcal{V}_{\mathrm{ext}}})$.
	For an arbitrary $u$ we choose $c\in \mathbb{R}$ sufficiently large such that $w=u/c\in B_{\delta}(t_*)$. This implies the coercivity of $f'(t_*)$.
	Thirdly, we remark that boundedness and coercivity of $f'(t_*)$ are transferred straightforwardly to the adjoint operator $f'(t_*)^{\dag}$.
	We set $a(z_*,u)=\langle f'(t_*)^{\dag} z_* ,u\rangle$ and apply Lax--Milgram to the equation $a(z_*,u)=\mathcal{E}'(t_*)(u)$ for all $u\in \mathcal{V}_{\mathrm{ext}}$.
	This yields the existence and uniqueness of $z_* \in \mathcal{V}_{\mathrm{ext}}$.

	This argumentation holds whenever $f$ is strongly monotone.
	Hence, the existence and uniqueness of $z_d$ follows by the assumption that $\mathcal{V}_{\mathrm{ext}}^{(d)}$ is a sufficiently good approximation to $\mathcal{V}_{\mathrm{ext}}$.
	To show Eq.~\eqref{eq:QuasiOptDualElement} we decompose $z_d-z_*=z_d-\tilde{z}_d+\tilde{z}_d-z_*$, where $\tilde{z}_d\in \mathcal{V}_{\mathrm{ext}}^{(d)}$ solves 
	\begin{equation}
	(\mathcal{E}'(t_*))(u_d) = \langle f'(t_*)u_d,\tilde{z}_d\rangle~,\quad\forall u_d\in \mathcal{V}_{\mathrm{ext}}^{(d)}~.
	\label{eq:LemmaCeaTCCProb}
	\end{equation}
	Note that this is not the discrete problem since it uses the solution $t_*$ instead of $t_d$.
	In the same manner as we previously defined $a(\cdot,\cdot)$ we define a bilinear form from \eqref{eq:LemmaCeaTCCProb}.
	Because $f'(t_*)$ is a bounded and coercive linear map, C\'ea's lemma \cite{zaidler1990nonlinear} implies the quasi optimal approximation by $\tilde{z}_d$ to $z_*$, i.e., $\Vert\tilde{z}_d-z_* \Vert_{\mathcal{V}_{\mathrm{ext}}}\leq C~ d(z_*,\mathcal{V}_{\mathrm{ext}}^{(d)})$.

	To estimate $\Vert z_d-\tilde{z}_d \Vert_{\mathcal{V}_{\mathrm{ext}}}$ we use the coercivity of $f'(t_d)$.
	From Eqs.~\eqref{eq:LemmaCeaTCCProb} and \eqref{eq:StatELDis} we deduce
	\begin{align*}
	\gamma &\Vert z_d-\tilde{z}_d \Vert^2_{\mathcal{V}_{\mathrm{ext}}}
	\leq
	\langle f'(t_d)(z_d-\tilde{z}_d),z_d-\tilde{z}_d \rangle\\
	&=
	(\mathcal{E}'(t_d)-\mathcal{E}'(t_*))(z_d-\tilde{z}_d)+\langle (f'(t_*)-f'(t_d))(z_d-\tilde{z}_d),\tilde{z}_d\rangle\\
	&\leq
	L_{\mathcal{E}'}\Vert t_d-t_* \Vert_{\mathcal{V}_{\mathrm{ext}}}\Vert z_d-\tilde{z}_d \Vert_{\mathcal{V}_{\mathrm{ext}}}+L_{f'}\Vert t_d-t_* \Vert_{\mathcal{V}_{\mathrm{ext}}}\Vert z_d-\tilde{z}_d \Vert_{\mathcal{V}_{\mathrm{ext}}}\Vert \tilde{z}_d \Vert_{\mathcal{V}_{\mathrm{ext}}}\\
	&=
	(L_{\mathcal{E}'}+L_{f'}\Vert \tilde{z}_d \Vert_{\mathcal{V}_{\mathrm{ext}}})\Vert t_d-t_* \Vert_{\mathcal{V}_{\mathrm{ext}}}\Vert z_d-\tilde{z}_d \Vert_{\mathcal{V}_{\mathrm{ext}}}~.
	\end{align*}
	Using the quasi optimality of $\Vert\tilde{z}_d-z_* \Vert_{\mathcal{V}_{\mathrm{ext}}}$ we find that $\Vert\tilde{z}_d\Vert_{\mathcal{V}_{\mathrm{ext}}}$ is bounded by $\Vert z_* \Vert_{\mathcal{V}_{\mathrm{ext}}}+C d(z_*,\mathcal{V}_{\mathrm{ext}}^{(d)})$ and therefore
	\begin{align*}
	\Vert z_d-\tilde{z}_d \Vert_{\mathcal{V}_{\mathrm{ext}}}
	&\leq
	\frac{1}{\gamma}\left[
	L_{\mathcal{E}'}+L_{f'} \big(\Vert z_* \Vert_{\mathcal{V}_{\mathrm{ext}}}+C~ d(z_*,\mathcal{V}_{\mathrm{ext}}^{(d)})\big)\right]
	\Vert t_d-t_* \Vert_{\mathcal{V}_{\mathrm{ext}}}\\
	&\lesssim
	c_1\,d(t_*,\mathcal{V}_{\mathrm{ext}}^{(d)})
	+
	c_2\,d(t_*,\mathcal{V}_{\mathrm{ext}}^{(d)})~d(z_*,\mathcal{V}_{\mathrm{ext}}^{(d)})~.
	\end{align*}
\end{proof}

In order to estimate the error $\delta\varepsilon=|\mathcal{E}(t_*)-\mathcal{E}(t_d)|$ we define the primal residual $\rho(t_d)(\cdot):\mathcal{V}_{\mathrm{ext}}^{(d)}\to \mathbb{R};\;u\mapsto -\langle f(t_d),u \rangle$ and the dual residual $\rho^*(t_d,z_d)(\cdot):\mathcal{V}_{\mathrm{ext}}^{(d)}\to \mathbb{R};$ $u\mapsto \mathcal{E}'(t_d)(u)-\langle Df(t_d)(u),z_d \rangle$.
The following error characterization is based on the results of Bangerth and Rannacher \cite{bangerth2013adaptive} formulated in a suitable way for this article.
\begin{theorem}[Bangerth--Rannacher, 2003]
\label{th:Bangerth-Rannacher}
    For any solution of Eqs.~\eqref{eq:Clustereq} and \eqref{eq:ClustereqDis}, we have the error representation
	\begin{align}
	2(\mathcal E(t_*)-\mathcal E(t_d))
	=
	\mathcal{R}_d^{(3)}+\rho(t_d)(z_*-\upsilon_d)+\rho^*(t_d,z_d)(t_*-w_d)~,
	\label{eq:Bangerth,Rannacher}
	\end{align}
	with arbitrary $\upsilon_d,w_d\in\mathcal{V}_{\mathrm{ext}}^{(d)}$.
	The remainder term $\mathcal{R}_d^{(3)}$ is cubic in the primal and dual error $e=t_*-t_d$ and $e^*=z_*-z_d$,
	\begin{equation*}
	\begin{aligned}
	\mathcal{R}_d^{(3)}
	&=
	\int_0^1\left(
	\mathcal{E}^{(3)}(t_d+se)(e,e,e)
	-\langle f^{(3)}(t_d+se)(e,e,e),z_d+se^* \rangle\right.\\
	&\qquad\left.
	-3\langle f^{(2)}(t_d+se)(e,e),e^* \rangle\right)s(s-1)~ds~.
	\end{aligned}
	\end{equation*}
\end{theorem}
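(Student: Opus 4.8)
The plan is to exploit the Lagrangian structure of the problem together with a trapezoidal-rule identity applied along the segment joining the discrete and exact stationary points. First I would record the two consistency identities $\mathcal E(t_*)=\mathcal L(t_*,z_*)$ and $\mathcal E(t_d)=\mathcal L(t_d,z_d)$. The former holds because $t_*$ solves the cluster equation \eqref{eq:Clustereq}, so $f(t_*)=0$ and the constraint term in the Lagrangian \eqref{eq:Langrangian} drops; the latter holds because $z_d\in\mathcal V_{\mathrm{ext}}^{(d)}$ while $t_d$ solves the discrete cluster equation \eqref{eq:ClustereqDis}, whence $\langle f(t_d),z_d\rangle=0$. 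Consequently the energy error becomes a pure Lagrangian difference, $\mathcal E(t_*)-\mathcal E(t_d)=\mathcal L(x_*)-\mathcal L(x_d)$, where I abbreviate $x_*=(t_*,z_*)$ and $x_d=(t_d,z_d)$.

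Next I would introduce the affine path $x(s)=x_d+s\,(x_*-x_d)$ and set $\Phi(s)=\mathcal L(x(s))$, so that $\mathcal E(t_*)-\mathcal E(t_d)=\Phi(1)-\Phi(0)=\int_0^1\Phi'(s)\,ds$. The central device is the trapezoidal rule with integral remainder,
\begin{equation*}
\int_0^1\Phi'(s)\,ds=\tfrac12\big(\Phi'(0)+\Phi'(1)\big)+\tfrac12\int_0^1\Phi'''(s)\,s(s-1)\,ds~,
\end{equation*}
obtained by two integrations by parts. Multiplying by $2$ already isolates the three contributions appearing in \eqref{eq:Bangerth,Rannacher}, provided the two endpoint derivatives and the third derivative are identified correctly.

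For the endpoints I would invoke stationarity. Since $x_*$ solves \eqref{eq:StatEL}, one has $\Phi'(1)=\mathcal L'(x_*)(x_*-x_d)=0$. The term $\Phi'(0)=\mathcal L'(x_d)(x_*-x_d)$ is then handled by Galerkin orthogonality: because $(t_d,z_d)$ solves the discrete system \eqref{eq:StatELDis}, the form $\mathcal L'(x_d)$ annihilates every discrete direction, so I may subtract an arbitrary $(w_d-t_d,\upsilon_d-z_d)$ with $w_d,\upsilon_d\in\mathcal V_{\mathrm{ext}}^{(d)}$ and replace the error by $(t_*-w_d,\,z_*-\upsilon_d)$. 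Splitting $\mathcal L'(x_d)(u,v)=\mathcal E'(t_d)u-\langle f'(t_d)u,z_d\rangle-\langle f(t_d),v\rangle$ into its two pieces then identifies them precisely as the dual residual $\rho^*(t_d,z_d)(t_*-w_d)$ and the primal residual $\rho(t_d)(z_*-\upsilon_d)$.

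It remains to match the remainder. Differentiating $\Phi(s)=\mathcal E(t(s))-\langle f(t(s)),z(s)\rangle$ three times along $(e,e^*)$ with $e=t_*-t_d$ and $e^*=z_*-z_d$, the product rule applied to the bilinear constraint term yields exactly $\mathcal E^{(3)}(t(s))(e,e,e)-\langle f^{(3)}(t(s))(e,e,e),z(s)\rangle-3\langle f^{(2)}(t(s))(e,e),e^*\rangle$, which is the integrand defining $\mathcal R_d^{(3)}$. I expect the only genuinely delicate point to be this derivative bookkeeping---in particular the coefficient $3$ in front of the $f^{(2)}$ term, which appears because the dual component $z(s)$ also moves along the path and thus contributes one term from differentiating the explicit $z$-dependence and two from differentiating $f'$---together with making the Galerkin-orthogonality substitution fully rigorous; everything else reduces to the standard Aubin--Nitsche-type manipulation.
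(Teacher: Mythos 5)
Your proof is correct: the consistency identities $\mathcal E(t_*)=\mathcal L(t_*,z_*)$ and $\mathcal E(t_d)=\mathcal L(t_d,z_d)$, the trapezoidal-rule identity with the $s(s-1)$ kernel, the vanishing of $\Phi'(1)$ by stationarity, the Galerkin-orthogonality shift producing the arbitrary test elements $w_d,\upsilon_d$, and the third-derivative bookkeeping (including the factor $3$ on the $f^{(2)}$ term) all check out. The paper itself gives no proof of this statement---it is imported verbatim from the cited Bangerth--Rannacher framework---and your argument is essentially the standard proof found there, so there is nothing to reconcile.
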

\indent
Similarly to the approach in \cite{rohwedder2013error} we are able to conclude with the following error estimates for the TCC energy.  
\begin{theorem} \label{Thm:QE}
	Let $\mathcal{V}_{\mathrm{ext}}^{(d)}$ be a sufficiently large subspace of $\mathcal{V}_{\mathrm{ext}}$ in the sense that $\Theta_d<c$ (see Lemma \ref{Lemma:QuasiOptimalDualSol}) for a suitable $c\in(0,1)$, and denote by $(t_*,z_*)$ and $(t_d,z_d)$ the solutions of Eqs.~\eqref{eq:StatEL} and \eqref{eq:StatELDis}.
	If $f$ is strongly monotone at $t_*$, we have
	\begin{align} 
	 \delta\varepsilon
	&\leq 
	\Vert t_d-t_*\Vert_{\mathcal{V}_{\mathrm{ext}}}\left(c_1\Vert t_d-t_*\Vert_{\mathcal{V}_{\mathrm{ext}}} +c_2\Vert z_d-z_*\Vert_{\mathcal{V}_{\mathrm{ext}}}\right)~,
	\label{1}
	\end{align}
	and further
	\begin{subequations}
	\begin{align}
	\label{2.1}
	\delta\varepsilon
	&\lesssim \Big(d(t_*,\mathcal{V}_{\mathrm{ext}}^{(d)}) +d(z_*,\mathcal{V}_{\mathrm{ext}}^{(d)})\Big)^2~,\\
	\label{2.2}
	\delta\varepsilon
	&\lesssim \Vert (e^{T_d}-e^{T_*})\phi_{\mathrm{CAS}}\Vert_{H^1}(\Vert (e^{T_d}-e^{T_*})\phi_{\mathrm{CAS}}\Vert_{H^1}+\Vert (e^{Z_d}-e^{Z_*})\phi_{\mathrm{CAS}}\Vert_{H^1})~,\\
	\label{2.3}
	\delta\varepsilon
	&\lesssim  \Big(\inf_{\psi\in\mathcal{H}_{\mathrm{ext}}}\Vert \psi-e^{T_*}\phi_{\mathrm{CAS}}\Vert^2_{H^1} +
	\inf_{\psi\in\mathcal{H}_{\mathrm{ext}}}\Vert \psi-e^{Z_*}\phi_{\mathrm{CAS}}\Vert^2_{H^1} \Big)^2~.
	\end{align}
	\end{subequations}
\end{theorem}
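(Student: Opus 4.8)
The plan is to follow the Aubin--Nitsche duality strategy of \cite{rohwedder2013error}, building directly on the error representation of Theorem~\ref{th:Bangerth-Rannacher} together with the quasi-optimal dual estimate of Lemma~\ref{Lemma:QuasiOptimalDualSol}. First I would invoke Theorem~\ref{th:Bangerth-Rannacher}, which for the primal and dual solutions $(t_*,z_*)$ and $(t_d,z_d)$ gives
\begin{equation*}
2(\mathcal E(t_*)-\mathcal E(t_d)) = \mathcal{R}_d^{(3)} + \rho(t_d)(z_*-\upsilon_d) + \rho^*(t_d,z_d)(t_*-w_d)
\end{equation*}
for arbitrary $\upsilon_d,w_d\in\mathcal{V}_{\mathrm{ext}}^{(d)}$. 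The freedom in $\upsilon_d,w_d$ is exploited via Galerkin orthogonality: since $t_d$ solves the discrete cluster equation \eqref{eq:ClustereqDis} we have $\rho(t_d)(u_d)=0$, and since $(t_d,z_d)$ solves \eqref{eq:StatELDis} we have $\rho^*(t_d,z_d)(u_d)=0$, for all $u_d\in\mathcal{V}_{\mathrm{ext}}^{(d)}$.

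Next I would estimate the two residual terms. For the primal residual I choose $\upsilon_d=z_d$ and use $f(t_*)=0$ to write $\rho(t_d)(z_*-z_d)=-\langle f(t_d)-f(t_*),z_*-z_d\rangle$; Lipschitz continuity of $f$ (Theorem~\ref{Th:LipschitzCont}) then bounds this by $L\Vert t_d-t_*\Vert_{\mathcal V_\mathrm{ext}}\Vert z_d-z_*\Vert_{\mathcal V_\mathrm{ext}}$. For the dual residual I choose $w_d=t_d$ and subtract the vanishing exact dual residual $\rho^*(t_*,z_*)(t_*-t_d)=0$ (the first line of \eqref{eq:StatEL}); expanding the difference and using Lipschitz continuity of $\mathcal E'$ and of $Df$ together with boundedness of $Df(t_d)$ bounds it by $C(\Vert t_d-t_*\Vert_{\mathcal V_\mathrm{ext}}+\Vert z_d-z_*\Vert_{\mathcal V_\mathrm{ext}})\Vert t_d-t_*\Vert_{\mathcal V_\mathrm{ext}}$. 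Finally the cubic remainder $\mathcal R_d^{(3)}$ is controlled by $C(\Vert t_d-t_*\Vert_{\mathcal V_\mathrm{ext}}+\Vert z_d-z_*\Vert_{\mathcal V_\mathrm{ext}})^3$, which is absorbed into the quadratic terms once $\Theta_d$ is small. Collecting these bounds yields \eqref{1}.

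Then I would derive \eqref{2.1} by feeding in the quasi-optimality estimates: Zarantonello's lemma (Lemma~\ref{Th:Zarantonello}) gives $\Vert t_d-t_*\Vert_{\mathcal V_\mathrm{ext}}\leq (L/\gamma)\,d(t_*,\mathcal V_\mathrm{ext}^{(d)})$, and Lemma~\ref{Lemma:QuasiOptimalDualSol} gives $\Vert z_d-z_*\Vert_{\mathcal V_\mathrm{ext}}\leq c_1\Theta_d+c_2\Theta_d^2$; inserting both into \eqref{1} and recalling $\Theta_d=\max\{d(t_*,\mathcal V_\mathrm{ext}^{(d)}),d(z_*,\mathcal V_\mathrm{ext}^{(d)})\}$ produces the stated quadratic bound. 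For \eqref{2.2} and \eqref{2.3} I would translate the amplitude-space norms into $H^1$-wavefunction norms: Proposition~\ref{prop:Focknorm_ExtNorm} and Theorem~\ref{th:OperatorNormEquiv} give $\Vert t_d-t_*\Vert_{\mathcal V_\mathrm{ext}}\sim\Vert(T_d-T_*)\phi_0\Vert_{H^1}$, and the local bi-Lipschitz property of the exponential map $T\mapsto e^T\phi_\mathrm{CAS}$ lets me replace $\Vert(T_d-T_*)\phi_0\Vert_{H^1}$ by $\Vert(e^{T_d}-e^{T_*})\phi_\mathrm{CAS}\Vert_{H^1}$ (analogously for the dual variable), yielding \eqref{2.2}; the best-approximation reformulation \eqref{2.3} follows by combining with the C\'ea-type quasi-optimality of the Galerkin wavefunction in $\mathcal H_\mathrm{ext}$.

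The hard part will be the careful bookkeeping in the dual residual step: one must verify that the exact dual residual truly vanishes for the continuous solution and that the cross terms produced when expanding $\rho^*(t_d,z_d)-\rho^*(t_*,z_*)$ genuinely carry at least one factor $\Vert t_d-t_*\Vert_{\mathcal V_\mathrm{ext}}$, so that the overall estimate stays quadratic rather than merely linear. A secondary difficulty is ensuring the exponential-to-amplitude norm transfer is uniform on the relevant ball, so that the constants in \eqref{2.2} and \eqref{2.3} do not degenerate; this relies on staying within $B_\delta(t_*)$ where Theorems~\ref{th:OperatorNormEquiv} and \ref{Th:StronglyMon} apply.
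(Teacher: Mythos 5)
Your proposal is correct and follows essentially the same route as the paper: it uses the Bangerth--Rannacher representation with the choices $\upsilon_d=z_d$, $w_d=t_d$, rewrites the dual residual by subtracting the vanishing exact stationarity condition from Eq.~\eqref{eq:StatEL}, applies the Lipschitz bounds and absorbs the cubic remainder via $\Theta_d$, then obtains \eqref{2.1} from Zarantonello quasi-optimality plus Lemma~\ref{Lemma:QuasiOptimalDualSol}, and \eqref{2.2}--\eqref{2.3} from the amplitude-to-wavefunction norm equivalences. The only (harmless) difference is that you spell out the exponential-map bi-Lipschitz step for \eqref{2.2}--\eqref{2.3}, which the paper compresses into a single reference to Proposition~\ref{prop:Focknorm_ExtNorm}.
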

\begin{proof}
	Using Eq.~\eqref{eq:StatEL} we can rewrite the dual residual as follows:
	\begin{equation*}
	\begin{aligned}
	&\rho^*(t_d,z_d)(s)=(\mathcal{E}'(t_d))(s)-\langle f'(t_d)(s),z_d \rangle\\
	&\quad=
	(\mathcal{E}'(t_d)-\mathcal{E}'(t_*))(s)+ \langle (f'(t_*)-f'(t_d))(s),z_* \rangle+\langle f'(t_d)(s),z_*-z_d \rangle~,
	\end{aligned}
	\end{equation*}
	for an arbitrary $s\in\mathcal{V}_{\mathrm{ext}}$.
	Using Eq.~\eqref{eq:Bangerth,Rannacher} in Theorem~\ref{th:Bangerth-Rannacher} we obtain
	\begin{align*}
	2\delta\varepsilon
	&\leq
	|\mathcal{R}_d^{(3)}|+|\langle f(t_d)-f(t_*),z_*-\upsilon_d \rangle|
	+|(\mathcal{E}'(t_d)-\mathcal{E}'(t_*))(t_*-w_d)|\\
	&\quad+ |\langle (f'(t_*)-f'(t_d))(t_*-w_d),z_* \rangle|+|\langle f'(t_d)(t_*-w_d),z_*-z_d \rangle|~.
	\end{align*}
	Exploiting the different Lipschitz continuities further implies 
	\begin{equation}
	\begin{aligned}
	2\delta\varepsilon
	&\leq
	|\mathcal{R}_d^{(3)}|+L_{f}\Vert t_d-t_* \Vert_{\mathcal{V}_{\mathrm{ext}}}\Vert z_*-\upsilon_d \Vert_{\mathcal{V}_{\mathrm{ext}}}
	+L_{\mathcal{E}'}\Vert t_d-t_* \Vert_{\mathcal{V}_{\mathrm{ext}}}\Vert t_*-w_d \Vert_{\mathcal{V}_{\mathrm{ext}}}\\
	&\quad+L_{f'}\Vert t_d-t_* \Vert_{\mathcal{V}_{\mathrm{ext}}}\Vert t_*-w_d \Vert_{\mathcal V_{\mathrm{ext}}}\Vert z_* \Vert_{\mathcal{V}_{\mathrm{ext}}}\\
	&\quad+C\Vert t_*-w_d \Vert_{\mathcal{V}_{\mathrm{ext}}}\Vert z_*-z_d \Vert_{\mathcal{V}_{\mathrm{ext}}}~.
	\label{eq:QuatError1}
	\end{aligned}
	\end{equation}
	This yields $2\delta\varepsilon
	\leq
	\Vert t_d-t_* \Vert_{\mathcal{V}_{\mathrm{ext}}}(c_1\Vert t_*-t_d \Vert_{\mathcal{V}_{\mathrm{ext}}}
	+c_2\Vert z_*-z_d \Vert_{\mathcal{V}_{\mathrm{ext}}})+|\mathcal{R}_d^{(3)}|
	$ for $w_d=t_d$ and $\upsilon_d=z_d$.
	By straightforward computations we estimate
	\begin{align*}
	|\mathcal{R}_d^{(3)}|
	&\leq
	L_{\mathcal{E}^{(3)}}
	\Vert t_*-t_d\Vert_{\mathcal{V}_{\mathrm{ext}}}^3
	+\zeta L_{f^{(3)}}\Vert t_*-t_d\Vert_{\mathcal{V}_{\mathrm{ext}}}^3 +3L_{f^{(2)}}\Vert t_*-t_d\Vert_{\mathcal{V}_{\mathrm{ext}}}\Vert z_*-z_d\Vert_{\mathcal{V}_{\mathrm{ext}}}\,,
	\end{align*}
	with $\zeta=\max_{s\in [0,1]}\Vert z_d+se^*\Vert_{\mathcal{V}_{\mathrm{ext}}} $.
	Hence, by Lemma \ref{Lemma:QuasiOptimalDualSol}, $|\mathcal{R}_d^{(3)}|\in\mathcal{O}(\Theta_d^3)$, i.e., we can control the remainder term $\mathcal{R}_d^{(3)}$ by means of $ \Theta_d^3$. 
	Since by assumption $\mathcal{V}_{\mathrm{ext}}^{(d)}$ is a sufficiently large subspace of $\mathcal{V}_{\mathrm{ext}}$ in the sense that $\Theta_d<c$, this shows Eq.~\eqref{1}.

	The bound in Eq.~\eqref{2.1} follows from inserting the optimal approximations $t_{\mathrm{opt}}$, $z_{\mathrm{opt}}\in\mathcal{V}_{\mathrm{ext}}^{(d)}$ in \eqref{eq:QuatError1} and applying Theorem~\ref{Th:Zarantonello}, Lemma~\ref{Lemma:QuasiOptimalDualSol} and the fact that $\Theta_d <1$. Then $\Vert z_d-z_*\Vert_{\mathcal{V}_{\mathrm{ext}}} \lesssim \Theta_d$ as the term in $\mathcal{O}(\Theta_d^2)$ becomes negligible.
	The inequalities \eqref{2.2} and \eqref{2.3} follow from Proposition~\ref{prop:Focknorm_ExtNorm}.
\end{proof}
We remark that this error estimate derivation does not require the uniqueness of the solution. 
In cases with not unique solutions, the {\it a priori} assumption $t_d\to t_*$ makes the result meaningful as then the remainder term can be assumed to be small.

We conclude this section by combining previous results to prove Theorem \ref{thm:err_main}, the main result of Subsection \ref{Sec:Error}.

\begin{proof}[Proof of Theorem~\ref{thm:err_main}]
	From Eq.~\eqref{eq:EestMain}, we recall that $\delta E \leq  \delta \varepsilon + \delta \varepsilon_{\mathrm{CAS}} + \delta \varepsilon_{\mathrm{CAS}}^*$. Then using	
	Lemma~\ref{lemma:deltaEcasStar}, Lemma~\ref{lemma:deltaEcas} and Eq.~\eqref{1} in Theorem~\ref{Thm:QE}, the desired result now follows.
\end{proof}

\section{Concluding Remarks and Outlook}
In this article, we presented a first analysis of the TCC method, proving locally unique and quasi-optimal solutions in Theorems \ref{Th:LipschitzCont} and \ref{Th:StronglyMon}, and a direct error estimate given by Theorem \ref{thm:err_main}.
The conceptional change from the HOMO-LUMO gap to the CAS-ext gap $\varepsilon_0$ is a key aspect of this article. 
The definition of $\varepsilon_0$ is tailored for existence, uniqueness and error estimate results that are widely applicable, in particular also to excited state approximations. 
For merely ground-state studies, better bounds in Section \ref{subsec:LocalUniqueness} are obtainable by changing the considered CAS-ext gap to $\tilde\varepsilon_0$.
The extended CAS-ext gap $\tilde\varepsilon_0$ is larger, and in general increases with the size of the CAS. 
Since the gap assumption enters directly in the norm estimates, a connection between the constants involved in the norm estimates in Section~\ref{subsec:LocalUniqueness} and the size of $\mathscr B_{\mathrm{CAS}}$ seems likely but remains to be proven.
Given the presented analysis, it appears reasonable to assume that the results can be generalized to the continuous formulation of the Schr\"odinger equation, without reference to a finite-dimensional single-particle basis for external space.
This corresponds to $K\to \infty$, in which case many of the concepts used in Section \ref{subsec:LocalUniqueness} may be generalized.
Apparently, the main problem in this generalization is that several properties of the Fock operator do not hold for $K\to \infty$. In particular, its spectrum is not purely discrete.
Even though it may appear that we use the Fock operator and its properties excessively, the presented analysis may also be performed for a different one-particle operator, i.e., not necessarily the Fock operator. 
Section \ref{Sec:Error} is based on achievements for general variational problems, implying the validity of Theorem \ref{thm:err_main} for infinite dimensions.   
The currently most important application of our analysis is the DMRG-TCCSD method \cite{veis2016coupled,doi:10.1021/acs.jpclett.6b02912,veis2018full,antalik2019towards}. In a recent publication, we investigated its numerical performance in light of the results in this article~\cite{faulstich2019numerical}.
Using tensor factorization methods---to obtain a well-chosen basis splitting and an approximation to the FCI solution on $\mathcal{H}_{\mathrm{CAS}}$---simplifies the error estimate in Theorem \ref{thm:err_main} since the methodological error becomes negligible and $\delta E_{\mathrm{CAS}}$ is quadratically bound. 
This yields a Galerkin-typical quadratic error estimate for the DMRG-TCC method.

\section{Acknowledgements}
We would like to thank Rolf Heilemann Myhre, Ji\u{r}\'i Pittner, Mih\'aly Andr\'as Csirik and Christian Schilling for valuable discussions and input to this project. 

\bibliographystyle{siamplain}
\bibliography{lib1}
\end{document}